\documentclass[12pt,a4paper]{amsart}
\setlength{\textwidth}{6.4truein}
\setlength{\oddsidemargin}{0truein}
\setlength{\evensidemargin}{0truein}

\usepackage{a4wide}
\usepackage[english,russian]{babel}
\usepackage[T2A]{fontenc}
\usepackage[cp1251]{inputenc} 
\usepackage{amsfonts}
\usepackage{amssymb, amsthm, amscd}
\usepackage{amsmath}
\usepackage{mathtools}
\usepackage{needspace}
\usepackage{etoolbox}
\usepackage{lipsum}
\usepackage{comment}
\usepackage{cmap}
\usepackage[pdftex]{graphicx}
\usepackage[unicode]{hyperref}
\usepackage[matrix,arrow,curve]{xy}
\usepackage[usenames,dvipsnames]{xcolor}
\usepackage{colortbl}
\usepackage{textcomp}
\usepackage{cite}
\usepackage{euscript}

\pagestyle{plain}

\DeclareMathOperator{\pr}{pr}
\DeclareMathOperator{\id}{id}

\DeclareMathOperator{\aug}{aug}

\sloppy

\newcommand{\Z}{\mathbb{Z}}

\newcommand{\N}{\mathbb{N}}

\newcommand{\M}{\mathfrak{M}}

\newcommand{\fr}[1]{\mathfrak{#1}}

\newcommand{\ovl}{\overline}

\newcommand{\eps}{\varepsilon}
\newcommand{\ph}{\varphi}

\newcommand{\sub}{\subseteq}
\newcommand{\Hom}{\mathop{\mathrm{Hom}}\nolimits}
\newcommand{\Ker}{\mathop{\mathrm{Ker}}\nolimits}

\newcommand{\Spec}{\mathop{\mathrm{Spec}}\nolimits}
\newcommand{\Max}{\mathop{\mathrm{Max}}\nolimits}
\newcommand{\rk}{\mathop{\mathrm{rk}}\nolimits}

\renewcommand{\ge}{\geqslant}
\renewcommand{\le}{\leqslant}
\newcommand{\sm}{\setminus}
\newcommand{\map}[3]{#1\colon #2\to #3}
\newcommand{\phan}{\phantom{,}}

\newcommand{\ox}{\otimes}

\newcommand{\<}{\langle}
\renewcommand{\>}{\rangle}

\newcommand{\euP}{\EuScript{P}}
\newcommand{\euQ}{\EuScript{Q}}

\newcommand\leftact[2]{\phan^{#1}#2}

\theoremstyle{plain}
\newtheorem{thm}{Theorem}
\newtheorem{lem}{Lemma}
\newtheorem{prop}{Proposition}
\newtheorem{cor}{Corollary}

\theoremstyle{definition}

\theoremstyle{remark}
\newtheorem*{rem*}{Remark}
\newtheorem*{rems}{Remarks}

\AtBeginEnvironment{thm}{\begin{samepage}}
\AtEndEnvironment{thm}{\end{samepage}}
\AtBeginEnvironment{lem}{\begin{samepage}}
\AtEndEnvironment{lem}{\end{samepage}}
\AtBeginEnvironment{st}{\begin{samepage}}
\AtEndEnvironment{st}{\end{samepage}}
\AtBeginEnvironment{crit}{\begin{samepage}}
\AtEndEnvironment{crit}{\end{samepage}}
\AtBeginEnvironment{ax}{\begin{samepage}}
\AtEndEnvironment{ax}{\end{samepage}}
\AtBeginEnvironment{defn}{\begin{samepage}}
\AtEndEnvironment{defn}{\end{samepage}}
%\AtBeginEnvironment{exmp}{\begin{samepage}}
%\AtEndEnvironment{exmp}{\end{samepage}}
\AtBeginEnvironment{cor}{\begin{samepage}}
\AtEndEnvironment{cor}{\end{samepage}}
%\AtBeginEnvironment{rem}{\begin{samepage}}
%\AtEndEnvironment{rem}{\end{samepage}}
\AtBeginEnvironment{note}{\begin{samepage}}
\AtEndEnvironment{note}{\end{samepage}}
\AtBeginEnvironment{prop}{\begin{samepage}}
\AtEndEnvironment{prop}{\end{samepage}}

\newcommand{\Stab}{\mathop{\mathrm{Stab}}\nolimits}
\newcommand{\SL}{\mathop{\mathrm{SL}}\nolimits}
\newcommand{\Sp}{\mathop{\mathrm{Sp}}\nolimits}

\newcommand{\tc}{\text{,}}
\newcommand{\tp}{\text{.}}

\renewcommand{\tilde}{\widetilde}
\renewcommand{\hat}{\widehat}

\DeclareMathOperator{\gen}{gen}

\makeatletter
\def\@settitle{\begin{center}%
    \baselineskip14\p@\relax
    \bfseries
    \@title
  \end{center}%
}

\def\@evenhead{\hfil\sc p. gvozdevsky\hfil}
\def\@oddhead{\hfil\sc overgroups of subsystem subgroups \hfil}
\makeatother

\title{Overgroups of subsystem subgroups in exceptional groups:\\ inside a sandwich}

\address{Chebyshev Laboratory, St. Petersburg State University, 14th Line V.O., 29, Saint Petersburg 199178 Russia.}

\thanks{The author is a participant of a scientific group that won "Leader" grant by "BASIS" foundation in 2020, grant \#20-7-1-27-4. Research is also supported by «Native towns», a social investment program of PJSC ''Gazprom Neft'', and also by grant given as subsidies from Russian federal budget for creation and development of international mathematical centres, agreement between MES and PDMI RSA  \textnumero 075-15-2019-1620 from November 8, 2019.}

\author{P.~Gvozdevsky}
\email{gvozdevskiy96@gmail.com}
\date{}

\keywords{Chevalley groups, commutative rings,  subsystem subgroups, normality of the elementary subgroup, nilpotent structure of $K_1$.}

\begin{document}
\selectlanguage{english}

\maketitle
%\tableofcontents 

\begin{abstract}
	The current paper is an addition to the previous paper by author ({\it Overgroups of subsystem subgroups in exceptional groups\textup: a $2{A}_1$\nobreakdash-proof}, (2020)), where the overgroup lattice of the elementary subsystem subgroup $E(\Delta,R)$ of the Chevalley group $G(\Phi,R)$ for a large enough root subsystem $\Delta$ was studied. Now we study the connection between the elementary subgroup $\hat{E}(\sigma)$ given by the net of ideals~$\sigma$ of the ring~$R$, and the stabilizer $S(\sigma)$ of the corresponding Lie subalgebra of the Chevalley algebra. In particular, we prove that under a certain condition the subgroup $\hat{E}(\sigma)$ is normal in $S(\sigma)$, and we also study some properties of the corresponding quotient group.
\end{abstract}

\section{Introduction}

In the paper \cite{VavStepSurvey} its authors suggested the following problem: to describe overgroups of the elementary subsystem subgroup $E(\Delta,R)$ in the Chevalley group $G(\Phi,R)$ for a large enough subsystem~$\Delta$; the corresponding hypotheses were formulated in \cite{Vsch}. According to these hypotheses the description must be given in terms of nets of ideals of the ground ring $R$ (see definition in Subsection~\ref{nets}). Namely, for any overgroup $E(\Delta,R)\le H\le G(\Phi,R)$ the ground ring $R$ has a unique net of ideals $\sigma$ called a {\it level} of this overgroup such that $E(\sigma)\le H\le N_{G(\Phi,R)}(E(\sigma))$, where the group $E(\sigma)$ is defined as in Subsection~\ref{NetSubgroups}. Such answer is called a {\it standard description} or a {\it sandwich-classification}. Here by sandwich we mean the set of overgroups of some fixed level.

Most of that hypotheses with certain rectification were proved in the author's paper~\cite{Gvoz2A1} assuming that the ring $R$ does not have a residue field of two elements. In that paper the known results for classical groups are not just generalised to exceptional groups but also are rectified in such a way to take into account irreducible components of type~$A_1$ in the subsystem~$\Delta$. The rectification was that the normaliser of the group $E(\sigma)$ was replaced by the stabiliser $S(\sigma)$ of certain subalgebra $L(\sigma)$ in Chevalley algebra $L(\Phi,R)$ (see definitions in Subsections~\ref{LieAlgebras} and~\ref{NetSubgroups}). Such form of the answer was called a {\it pseudostandard description}. 

It is easy to see that under certain assumption the inclusion $N_{G(\Phi,R)}(E(\sigma))\le S(\sigma)$ holds true (see Corollary 2 in \cite{Gvoz2A1}). Therefore, the question on normality of $E(\sigma)$ in $S(\sigma)$ arises; this question is, in fact, independent to results of \cite{Gvoz2A1}. 

There are many interesting cases where the subsystem $\Delta$ has irreducible components of type $A_1$; in these cases we do not have normality in general because the elementary group $E(2,R)$ is not normal in $\SL(2,R)$ in general. In order to rectify that, we define the groups $\hat{E}(\Delta,R)$ and $\hat{E}(\sigma)$ by adding certain unipotents to the set of generators. If for a given subsystem and a given ring the pseudostandard description in sence of~\cite{Gvoz2A1} holds true, then for overgroups of the subgroup $\hat{E}(\Delta,R)$ the similar description with $E(\sigma)$ replaced by $\hat{E}(\sigma)$ holds true. In addition,  in case where the subsystem $\Delta$ has no irreducible components of type $A_1$, we have $\hat{E}(\Delta,R)=E(\Delta,R)$ and $\hat{E}(\sigma)=E(\sigma)$. 

In the present paper we prove under certain assumptions (see condition~$(*)$ in \S\ref{statements}) that the subgroup $\hat{E}(\sigma)$ is normal in $S(\sigma)$ (Theorem \ref{normal}). Our proof uses the same methods as the proof of normality of the elementary subgroup in Chevalley group given by Taddei in~\cite{Taddei}. Before we prove Theorem~\ref{normal} we will need to study the intersection of the group $S(\sigma)$ with the congruence subgroup whose level is the Jacobson radical (Theorem \ref{Jacobson}); also we will need to write down the generators of the group $S(\sigma)$ in case where the ground ring is local (Theorem \ref{local}). 

Further we define the group $G(\sigma)$, which is something like the ''connected component'' of the group $S(\sigma)$. Theorem~\ref{finiteindex} of the present paper shows that this subgroup is normal in $S(\sigma)$ and the corresponding quotient group can be embedded in a product of certain subquotients of the Weyl group $W(\Phi)$. In case where the ground ring is Noetherian the number of factors is finite; hence the quotient group $S(\sigma)/G(\sigma)$ is also finite. For the classical groups these results are known from the papers by Borevich, Vavilov and Shchegolev \cite{BV84, VavMIAN, VavSplitOrt, SchegDiss, SchegMainResults,  SchegSymplectic}.

Another result of the present paper is similar to the theorem proved in~\cite{HazVav} about the $K_1$ group being nilpotent. Namely, assuming, in addition, that the subsystem $\Delta$ has no irreducible components if type $A_1$, we prove that the group $G(\sigma)/E(\sigma)$ is nilpotent-by-abelian (Theorem~\ref{nilpotent}). Note that the special case of this Theorem was proved in the author's paper~\cite{GvozLevi}, where the problem on overgroups for subsystems $A_l\le D_l$, $D_5\le E_6$ and $E_6\le E_7$ was studied.

We introduce all the necessary notation in \S\ref{notation}; after that in \S\ref{statements} we give precise statements for all the theorems mentioned above. The remaining Sections (\S\S4--10) contain proofs.

\section{Basic notation}\label{notation}

\subsection{Root systems and Chevalley groups}

Let $\Phi$ be an irreducible root system in the sense of \cite{Bourbaki4-6}, $\euP$ a lattice that is intermediate between the root lattice $\euQ(\Phi)$ and the weight lattice $\euP(\Phi)$, $R$ a commutative associative ring with unity, $G(\Phi,R)=G_{\euP}(\Phi,R)$ a Chevalley group of type $\Phi$ over $R$, $T(\Phi,R)=T_{\euP}(\Phi,R)$ a split maximal torus of $G(\Phi,R)$. For every root $\alpha\in\Phi$ we denote by $X_\alpha=\{x_\alpha(\xi),\colon \xi\in R\}$ the corresponding unipotent root subgroup with respect to $T$. We denote by $E(\Phi,R)=E_{\euP}(\Phi,R)$ the elementary subgroup generated by all $X_\alpha$, $\alpha\in\Phi$. 

Let $\Delta$ be a proper subsystem of $\Phi$. We denote by $E(\Delta,R)$ the subgroup of $G(\Phi,R)$, generated by all~$X_\alpha$, where $\alpha\in \Delta$. It is called an (elementary) {\it subsystem subgroup}. It can be shown that it is an elementary subgroup of a Chevalley group $G(\Delta,R)$, embedded into the group $G(\Phi,R)$. Here the lattice between $\euQ(\Delta)$ and $\euP(\Delta)$ is an orthogonal projection of $\euP$ onto the corresponding subspace.

We denote by $W(\Phi)$ resp. $W(\Delta)$ the Weyl groups of the systems $\Phi$ resp. $\Delta$.

\subsection{Group theoretic notation}
\begin{itemize}
	
	\item If the group $G$ acts on the set  $X$, $x\in X$ and $Y\sub X$, we denote by $\Stab_G(x)$ resp. $\Stab_G(Y)$ the stabiliser
	of the element~$x$ resp. the stabiliser of the subset $Y$ (as a subset, not pointwise).
	
	\item Commutators are left normalised:
	$$
	[x,y]=xyx^{-1}y^{-1}.
	$$
	
	\item Upper index stands for the left or right conjugation: 
	$$
	\leftact{g}{h}=ghg^{-1},\quad h^g=g^{-1}hg.
	$$
	
	\item If $X$ is a subset of the group $G$, we denote by $\<X\>$ the subgroup generated by $X$.
	
	\item If $H\le G$, then by $H^G$ we denote the normal closure of the subgroup $H$ in the group $G$.  
\end{itemize}

\subsection{Reduction homomorphisms and congruence subgroups}

If $R$ is a ring and $I\unlhd R$ is an ideal, then the projection onto the quotient $R\to R/I$, and also the corresponding reduction homomorphism $G(\Phi,R)\to G(\Phi,R/I)$ we denote by $\rho_I$.

The kernel of this homomorphism is called the {\it principal congruence subgroup} of level $I$; we denote it by $G(\Phi,R,I)$.

We also denote $T(\Phi,R,I)=G(\Phi,R,I)\cap T(\Phi,R)$.

The {\it elementary congruence subgroup} of level $I$ is the group
$$
E(\Phi,R,I)=\<\{x_\alpha(\xi)\colon \alpha\in\Phi,\, \xi\in I\}\>^{E(\Phi,R)}\tp
$$

\subsection{Nets of ideals}\label{nets}

We need the following definition; see, for example, sur\-vey~\cite{VavSbgs} and further references in it.

A collection of ideals $\sigma=\{\sigma_\alpha\}_{\alpha\in\Phi}$ of the ring $R$ is called a {\it net of ideals}, if the following conditions hold true.
\begin{enumerate}
	\item If $\alpha$, $\beta$, $\alpha+\beta\in\Phi$, then $\sigma_\alpha\sigma_\beta\sub \sigma_{\alpha+\beta}$.
	
	\item If $\alpha\in\Delta$, then $\sigma_\alpha=R$.
\end{enumerate} 

The next lemma follows directly from the definition. 

\begin{lem}
	\label{EqualIdeals} If $\sigma$ if a net of ideals\textup, and $\alpha,\beta\in\Phi$ are such that $\alpha-\beta\in\Delta,$ then $I_\alpha=I_\beta$.
\end{lem}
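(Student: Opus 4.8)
The plan is to derive the equality $\sigma_\alpha=\sigma_\beta$ from the two axioms of a net by playing the multiplicativity condition~(1) off against itself in both directions. Write $\gamma=\alpha-\beta$, so that by hypothesis $\gamma\in\Delta$. Since $\Delta$ is a root subsystem of $\Phi$, both $\gamma$ and $-\gamma$ lie in $\Delta$, and hence in $\Phi$; moreover axiom~(2) gives $\sigma_\gamma=\sigma_{-\gamma}=R$.

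Next I would apply axiom~(1) to the triple $\beta,\gamma,\beta+\gamma$: all three are roots of $\Phi$ (the last one being $\alpha$), so $\sigma_\beta\sigma_\gamma\sub\sigma_{\beta+\gamma}=\sigma_\alpha$, and since $\sigma_\gamma=R$ this reads $\sigma_\beta=\sigma_\beta R\sub\sigma_\alpha$. Applying axiom~(1) in the same way to the triple $\alpha,-\gamma,\alpha+(-\gamma)=\beta$ yields $\sigma_\alpha=\sigma_\alpha\sigma_{-\gamma}\sub\sigma_\beta$. The two inclusions together give $\sigma_\alpha=\sigma_\beta$, which is the assertion of the lemma (the $I_\alpha$, $I_\beta$ in the statement being the net ideals $\sigma_\alpha$, $\sigma_\beta$).

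There is essentially no obstacle here: the only point requiring a moment's attention is the verification that the triples of roots fed into axiom~(1) genuinely satisfy its hypothesis $\alpha,\beta,\alpha+\beta\in\Phi$, and this is immediate from $\gamma\in\Delta\sub\Phi$ together with the assumptions $\alpha,\beta\in\Phi$ and $\alpha-\beta=\gamma$. One degenerate case worth a remark is $\alpha=\beta$, but then $\gamma=0\notin\Delta$, so the hypothesis already forces $\alpha\neq\beta$ and the argument above applies without change.
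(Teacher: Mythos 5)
Your argument is correct and is exactly the intended one: the paper offers no written proof, stating only that the lemma follows directly from the definition of a net, and your two applications of the multiplicativity axiom with $\sigma_{\pm\gamma}=R$ (for $\gamma=\alpha-\beta\in\Delta$) are precisely that direct verification, with $I_\alpha$, $I_\beta$ read as $\sigma_\alpha$, $\sigma_\beta$.
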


\medskip

If $I\unlhd R$ is an ideal, and $\map{f}{R}{A}$ is a ring homomorphism, then by $f(I)$ we denote the ideal in  $A$ generated by the image of the ideal $I$. If $\sigma$ is a net of ideals in $R$, then we set $f(\sigma)_\alpha=f(\sigma_\alpha)$, which is a net of ideals in $A$.

Given two nets of ideals $\sigma_1$,$\sigma_2$ in the ring $R$, we write $\sigma_1\le\sigma_2$ if $(\sigma_1)_\alpha\le(\sigma_2)_\alpha$ for any $\alpha\in\Phi$.

\subsection{Lie algebras}\label{LieAlgebras}

We denote by $L(\Phi,\Z) $ the integer span of the Chevalley basis in the complex Lie algebra of type $\Phi$ (see~\cite{Humphreys}). The following notation stands for Chevalley algebra
$L(\Phi,R)=L(\Phi,\Z)\ox_\Z R$. This is a Lie algebra over the ring $R$ equipped with an action of the group $G(\Phi,R)$ called the adjoint representation. For elements $g\in G(\Phi,R)$ and $v\in L(\Phi,R)$, we denote this action by $\leftact{g}{v}$.

Note that the algebra $L(\Phi,R)$, in general, is not isomorphic to the tangent Lie algebra of the algebraic group $G(\Phi,R)$ (see, for example, \cite{RoozemondDiss}). However, if the group is simply connected, then these algebras are canonically isomorphic.

%\smallskip

We denote by $e_\alpha$, $\alpha\in\Phi$ and $h_i$, $i=1,\ldots,\rk\Phi$ the Chevalley basis of the Lie algebra $L(\Phi,R)$. By $D$ we denote its toric subalgebra generated by~$h_i$.

For every element $v\in L(\Phi,R)$, we denote by $v^\alpha$ and $v^i$ its coefficient in Chevalley basis. 

For a net of ideals~$\sigma$ we set
$$
L(\sigma)=D\oplus\bigoplus_{\alpha\in\Phi}\sigma_\alpha e_\alpha\le L(\Phi,R),
$$

The next lemma follows directly from the definition of a net. 

\begin{lem}\label{lie} The submodule $L(\sigma)$ is a Lie-subalgebra in $L(\Phi,R)$.
\end{lem}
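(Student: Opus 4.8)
The plan is to verify directly that the submodule $L(\sigma)=D\oplus\bigoplus_{\alpha\in\Phi}\sigma_\alpha e_\alpha$ is closed under the Lie bracket, using the structure constants of the Chevalley basis and the two defining conditions of a net of ideals. Since $L(\sigma)$ is manifestly an $R$-submodule, it suffices to check that the bracket of any two basis-type generators lands back in $L(\sigma)$; because the bracket is $R$-bilinear, we may take generators of the form $h$ (with $h\in D$), $\xi e_\alpha$ with $\xi\in\sigma_\alpha$, and $\eta e_\beta$ with $\eta\in\sigma_\beta$.

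First I would handle the easy cases. The bracket $[D,D]=0$, so $[h,h']\in D\subseteq L(\sigma)$. Next, $[h,\xi e_\alpha]=\xi\,\alpha(h)\,e_\alpha$ for a suitable scalar $\alpha(h)\in\Z$ coming from the toric action; since $\sigma_\alpha$ is an ideal, $\xi\,\alpha(h)\in\sigma_\alpha$, so this lies in $\sigma_\alpha e_\alpha\subseteq L(\sigma)$. The only remaining case is $[\xi e_\alpha,\eta e_\beta]$ with $\xi\in\sigma_\alpha$, $\eta\in\sigma_\beta$, which splits into three subcases according to the Chevalley commutation formulas. If $\alpha+\beta=0$, then $[\xi e_\alpha,\eta e_{-\alpha}]=\xi\eta\,h_\alpha\in D\subseteq L(\sigma)$. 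If $\alpha+\beta\notin\Phi\cup\{0\}$, then $[e_\alpha,e_\beta]=0$ and there is nothing to check. If $\alpha+\beta\in\Phi$, then $[\xi e_\alpha,\eta e_\beta]=N_{\alpha,\beta}\,\xi\eta\,e_{\alpha+\beta}$ with $N_{\alpha,\beta}\in\Z$; here condition~(1) in the definition of a net gives $\xi\eta\in\sigma_\alpha\sigma_\beta\subseteq\sigma_{\alpha+\beta}$, hence $N_{\alpha,\beta}\,\xi\eta\in\sigma_{\alpha+\beta}$ and the bracket lies in $\sigma_{\alpha+\beta}e_{\alpha+\beta}\subseteq L(\sigma)$.

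There is essentially no obstacle here: the lemma is exactly the assertion that the net axioms are designed to make $L(\sigma)$ a subalgebra, and the proof is a bookkeeping exercise over the finitely many pairs of roots. The only point requiring a modicum of care is that condition~(1) of a net is stated only for the case $\alpha,\beta,\alpha+\beta\in\Phi$, which is precisely the case in which the structure constant $N_{\alpha,\beta}$ can be nonzero; all other bracket terms vanish or land in $D$, so no further hypothesis is needed. I would therefore present the argument as a short case analysis, remarking that the toric part $D$ absorbs both the $[D,D]$ and the $[e_\alpha,e_{-\alpha}]$ contributions and that condition~(2) of a net plays no role in this particular lemma (it will matter elsewhere, e.g. in ensuring $L(\Delta,R)\subseteq L(\sigma)$).
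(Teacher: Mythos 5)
Your proof is correct and matches the paper, which simply notes that the lemma follows directly from the definition of a net; your case analysis with the Chevalley structure constants (the key case being $[\xi e_\alpha,\eta e_\beta]$ with $\alpha+\beta\in\Phi$, handled by condition (1), and the $[e_\alpha,e_{-\alpha}]$ and toric brackets absorbed by $D$) is exactly the intended verification.
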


\subsection{Net subgroups}\label{NetSubgroups}

For a net of ideals~$\sigma$ we set
$$
E(\Phi,\Delta,R,\sigma)=\<x_\alpha(\xi)\colon \alpha\in\Phi,\, \xi\in\sigma_\alpha\>,\quad
S(\Phi,\Delta,R,\sigma)=\Stab_{G(\Phi,R)}(L(\sigma))\tp
$$
For simplicity, when the other parameters are clear from the context, we write just $E(\sigma)$ and $S(\sigma)$.

\subsection{Images of $\SL_2$}

For a root $\alpha\in \Phi$ we denote by $\ph_{\alpha}$the homomorphism of algebraic groups
$$
\map{\ph_{\alpha}}{\SL(2,R)}{G(\Phi,R)}
$$
such that
$$
\ph\left(\begin{pmatrix} 1 & \xi \\ 0 & 1\end{pmatrix}\right)=x_{\alpha}(\xi) \text{ and } \ph\left(\begin{pmatrix} 1 & 0 \\ \xi & 1\end{pmatrix}\right)=x_{-\alpha}(\xi)\tp
$$
For $\eps\in R^*$ we set 
$$
h_\alpha(\eps)=\ph_{\alpha}\left(\begin{pmatrix} \eps & 0\\ 0 & \eps^{-1} \end{pmatrix} \right)\tp
$$
For $\xi$,$\zeta$,$\eta\in R$ set
$$
t_{\alpha}^{\zeta,\eta}(\xi)=\ph_{\alpha}\left(\begin{pmatrix} 1-\xi\zeta\eta & \xi\zeta^2\\ -\xi\eta^2 & 1+\xi\zeta\eta  \end{pmatrix}\right)\tp
$$
The matrix in the last notation is the transvection
$$
e+\begin{pmatrix}\zeta\\ \eta\end{pmatrix}\xi\begin{pmatrix}-\eta & \zeta\end{pmatrix}\tp
$$

Let us make the following observation.
\begin{lem}\label{ConjugeteTransvection} Let $\alpha\in\Phi,$ $g\in \SL(2,R),$ $\xi,\zeta,\eta\in R,$ and let
	$$
	g\cdot \begin{pmatrix} \zeta \\ \eta\end{pmatrix}=\begin{pmatrix} \zeta_1 \\ \eta_1\end{pmatrix}\tp
	$$
	Then $\leftact{\ph_{\alpha}(g)}{t_{\alpha}^{\zeta,\eta}(\xi)}=t_{\alpha}^{\zeta_1,\eta_1}(\xi)$.
\end{lem}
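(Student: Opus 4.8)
The plan is to reduce the statement to a computation inside $\SL(2,R)$ via the homomorphism $\ph_\alpha$, using the naturality of the adjoint action. The key observation is that the matrix appearing in the definition of $t_\alpha^{\zeta,\eta}(\xi)$, namely the transvection $e+\binom{\zeta}{\eta}\xi\begin{pmatrix}-\eta&\zeta\end{pmatrix}$, transforms in a very simple way under conjugation by $g\in\SL(2,R)$: since $g$ is invertible with $\det g=1$, one has $\det g=1$ forcing $g^{-1}=\begin{pmatrix}\eta_1'&-\zeta_1'\\ \cdots&\cdots\end{pmatrix}$-type identities, but more directly, for the outer-product transvection $v\,\xi\,w^{\!\top}$ with $w^{\!\top}v=0$ one has
$$
g\,(e+v\,\xi\,w^{\!\top})\,g^{-1}=e+(gv)\,\xi\,(w^{\!\top}g^{-1}).
$$
So first I would check that if $v=\binom{\zeta}{\eta}$ and $w=\binom{-\eta}{\zeta}$ (so that the transvection is $e+v\xi\begin{pmatrix}-\eta&\zeta\end{pmatrix}$ with $\begin{pmatrix}-\eta&\zeta\end{pmatrix}v=0$), then $gv=\binom{\zeta_1}{\eta_1}$ by hypothesis, and the covector $\begin{pmatrix}-\eta&\zeta\end{pmatrix}g^{-1}$ equals $\begin{pmatrix}-\eta_1&\zeta_1\end{pmatrix}$. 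This last point is the only genuine computation: writing $g=\begin{pmatrix}a&b\\ c&d\end{pmatrix}$ with $ad-bc=1$, one has $g^{-1}=\begin{pmatrix}d&-b\\ -c&a\end{pmatrix}$, so $\begin{pmatrix}-\eta&\zeta\end{pmatrix}g^{-1}=\begin{pmatrix}-\eta d-\zeta c & \eta b+\zeta a\end{pmatrix}$, while $\binom{\zeta_1}{\eta_1}=\binom{a\zeta+b\eta}{c\zeta+d\eta}$, so indeed $-\eta_1=-(c\zeta+d\eta)$ and $\zeta_1=a\zeta+b\eta$; these match. Hence $g\,t_\alpha^{\zeta,\eta}(\xi)_{\mathrm{mat}}\,g^{-1}=t_\alpha^{\zeta_1,\eta_1}(\xi)_{\mathrm{mat}}$ as elements of $\SL(2,R)$.

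Next I would transport this identity through $\ph_\alpha$. Since $\ph_\alpha$ is a group homomorphism, $\leftact{\ph_\alpha(g)}{\ph_\alpha(h)}=\ph_\alpha(g)\,\ph_\alpha(h)\,\ph_\alpha(g)^{-1}=\ph_\alpha\bigl(\leftact{g}{h}\bigr)$ for all $g,h\in\SL(2,R)$. Applying this with $h=t_\alpha^{\zeta,\eta}(\xi)_{\mathrm{mat}}$ and using the $\SL(2,R)$-identity from the previous paragraph together with the definitions $\ph_\alpha\bigl(t_\alpha^{\zeta,\eta}(\xi)_{\mathrm{mat}}\bigr)=t_\alpha^{\zeta,\eta}(\xi)$ and likewise for $\zeta_1,\eta_1$, we obtain
$$
\leftact{\ph_\alpha(g)}{t_\alpha^{\zeta,\eta}(\xi)}=\ph_\alpha\bigl(g\,t_\alpha^{\zeta,\eta}(\xi)_{\mathrm{mat}}\,g^{-1}\bigr)=\ph_\alpha\bigl(t_\alpha^{\zeta_1,\eta_1}(\xi)_{\mathrm{mat}}\bigr)=t_\alpha^{\zeta_1,\eta_1}(\xi),
$$
which is exactly the claim.

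There is essentially no obstacle here; the lemma is a formal consequence of the functoriality of $\ph_\alpha$ plus the elementary linear-algebra fact about conjugating a rank-one transvection. The only point requiring the smallest amount of care is keeping the two column/row vectors straight — i.e.\ noting that the transvection is governed by the pair $\bigl(\binom{\zeta}{\eta},\ \begin{pmatrix}-\eta&\zeta\end{pmatrix}\bigr)$ and verifying that $g$ sends the column to $\binom{\zeta_1}{\eta_1}$ while $g^{-1}$ sends the row to $\begin{pmatrix}-\eta_1&\zeta_1\end{pmatrix}$, which is automatic precisely because $g\in\SL_2$ (so that left multiplication on columns and right multiplication by the inverse on the orthogonal covector are compatible). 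One could alternatively phrase the whole argument representation-theoretically, observing that $g\mapsto(gv,\ w^{\!\top}g^{-1})$ realizes the standard action on $R^2\oplus(R^2)^{*}$ and that $t_\alpha^{\zeta,\eta}$ depends only on the image of this pair, but the direct $2\times2$ check is shorter.
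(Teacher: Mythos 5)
Your proof is correct: the rank-one conjugation identity plus the explicit $2\times 2$ check that $g$ sends $\bigl(\begin{smallmatrix}\zeta\\ \eta\end{smallmatrix}\bigr)$ to $\bigl(\begin{smallmatrix}\zeta_1\\ \eta_1\end{smallmatrix}\bigr)$ while $\begin{pmatrix}-\eta & \zeta\end{pmatrix}g^{-1}=\begin{pmatrix}-\eta_1 & \zeta_1\end{pmatrix}$ (using $\det g=1$), transported through the homomorphism $\ph_\alpha$, is exactly the direct calculation the paper has in mind, since it states this lemma as an observation without further proof. No gaps; this matches the intended argument.
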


\medskip

For every net of ideals $\sigma$ we set
\begin{align*} %$$
	\hat{E}(\sigma)&=\hat{E}(\Phi,\Delta,R,\sigma)
	\\
	&=\big\<\{x_\alpha(\xi)\,:\, \alpha\in\Phi,\, \xi\in\sigma_\alpha\}\cup\{t_{\alpha}^{\zeta,\eta}(\xi)\,:\, \alpha\in\Delta,\;\xi,\zeta,\eta\in R\}\big\>\tp
\end{align*} %$$

\begin{rems}
	\begin{enumerate}
		\item If a root $\alpha$ belongs to a component of the subsystem $\Delta$ with rank at least~2, then the generators $t_{\alpha}^{\zeta,\eta}(\xi)$ are superfluous because they can be expressed by elementary root elements, see Lemma~\ref{notA1}.  However, it was shown by Cohn~\cite{CohnGL2} that this does not hold for the group $\SL(2,R)$ over the polynomial ring of two variables with coefficients in a field.
		
		\item If we set 
		$$
		\quad\qquad\hat{E}(\Delta,R)=\big\<\big\{x_\alpha(\xi)\,:\, \alpha\in\Delta,\, \xi\in R\big\}\cup\{t_{\alpha}^{\zeta,\eta}(\xi)\,:\, \alpha\in\Delta,\;\xi,\zeta,\eta\in R\}\big\>,
		$$
		then we have $\hat{E}(\sigma)=\<E(\sigma)\cup \hat{E}(\Delta,R)\>$. Therefore, if for a given sub\-system and a given ring the pseudostandard description of overgroups in sense of~\cite{Gvoz2A1} holds, then for overgroups of the subgroup $\hat{E}(\Delta,R)$ the similar description holds with $E(\sigma)$ being replaced by $\hat{E}(\sigma)$. 
		
		\item The definition of the group $\hat{E}(\sigma)$ is motivated by the fact that we can prove its normality the group $S(\sigma)$. The subgroup $E(\sigma)$ is not normal in $S(\sigma)$ in general because the elementary group is not normal in $\SL(2,R)$ in general.
		
		N.~A.~Vavilov suggested that the results of the present paper remain true if instead of using all the elements $t_{\alpha}^{\zeta,\eta}(\xi)$ one use only the images of elements from the normal closure of elementary subgroup in $\SL(2,R)$. However such set of generators is not so good when interacting with localisation; hence our proof would not work for it; the problem arises in the first item of Lemma~\ref{denominators}.
		
		\item  Elements $t_{\alpha}^{\zeta,\eta}(\xi)$ are images of symplectic transvections (recall that $\SL(2,R)=\Sp(2,R)$). N.~A.~Vavilov also suggested that the subgroup of $\SL(2,R)$ generated by them coincides with the subgroup generated by all transvections; in other words, that Eichler group for $\Sp(2,R)$ coincides with Eichler group for $\SL(2,R)$. However, we do not know any such results.
	\end{enumerate}
\end{rems}

\begin{lem} Let $\sigma$ be a net of ideals. Then $\hat{E}(\sigma)\le S(\sigma)$.
\end{lem}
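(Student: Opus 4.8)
The plan is to use that $S(\sigma)=\Stab_{G(\Phi,R)}(L(\sigma))$ is a subgroup of $G(\Phi,R)$, so it suffices to check that each of the two families of generators of $\hat E(\sigma)$ --- the root elements $x_\alpha(\xi)$ with $\alpha\in\Phi$ and $\xi\in\sigma_\alpha$, and the elements $t_\alpha^{\zeta,\eta}(\xi)$ with $\alpha\in\Delta$ and $\xi,\zeta,\eta\in R$ --- stabilises the submodule $L(\sigma)=D\oplus\bigoplus_{\beta\in\Phi}\sigma_\beta e_\beta$ of $L(\Phi,R)$.

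For a root element $x_\alpha(\xi)$ I would compute directly from the well-known formulas for its adjoint action: $e_\alpha$ is fixed; $h\in D$ goes to $h$ plus an integer multiple of $\xi e_\alpha$; $e_{-\alpha}$ goes to $e_{-\alpha}$ plus an integer multiple of $\xi h_\alpha$ plus an integer multiple of $\xi^2 e_\alpha$; and for $\beta\ne\pm\alpha$ the element $e_\beta$ goes to $e_\beta+\sum_{i\ge 1}c_i\xi^i e_{\beta+i\alpha}$, summed over those $i$ with $\beta+i\alpha\in\Phi$ and $c_i\in\Z$. Applying this to $v=v_D+\sum_\beta v^\beta e_\beta\in L(\sigma)$, the $D$-component of $\leftact{x_\alpha(\xi)}{v}$ again lies in $D$ and so imposes no condition, while the $e_\gamma$-coefficient is $v^\gamma$ plus an integer combination of terms $v^{\gamma-i\alpha}\xi^i$ ($i\ge 1$, $\gamma-i\alpha\in\Phi$), plus, only when $\gamma=\alpha$, a summand in $R\xi$ coming from $v_D$. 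Here $v^\gamma\in\sigma_\gamma$; the $R\xi$-summand lies in $R\sigma_\alpha=\sigma_\alpha$ because $\xi\in\sigma_\alpha$; and in a term $v^{\gamma-i\alpha}\xi^i$, either $\gamma-i\alpha\notin\Z\alpha$, in which case $\gamma-i\alpha,\gamma-(i-1)\alpha,\dots,\gamma$ is a chain of roots and the net axiom yields $\sigma_{\gamma-i\alpha}\sigma_\alpha^{\,i}\sub\sigma_\gamma$, hence $v^{\gamma-i\alpha}\xi^i\in\sigma_\gamma$; or else $\gamma=\alpha$, $\gamma-i\alpha=-\alpha$, $i=2$, and $v^{-\alpha}\xi^2=(v^{-\alpha}\xi)\,\xi\in R\sigma_\alpha=\sigma_\alpha$. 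Therefore $\leftact{x_\alpha(\xi)}{L(\sigma)}\sub L(\sigma)$, and replacing $\xi$ by $-\xi$ gives the opposite inclusion, so $x_\alpha(\xi)\in S(\sigma)$. I would stress that this holds over an arbitrary commutative ring with an arbitrary net of ideals.

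The one real obstacle is posed by the elements $t=t_\alpha^{\zeta,\eta}(\xi)$ with $\alpha\in\Delta$, since $\SL(2,R)$ need not be generated by elementary matrices and hence $t$ need not be a product of root elements over $R$. My plan here is to localise. To prove $\leftact{t}{L(\sigma)}\sub L(\sigma)$, fix $v\in L(\sigma)$; as the $D$-component of $\leftact{t}{v}$ is automatically in $D$, it is enough to show $(\leftact{t}{v})^\beta\in\sigma_\beta$ for every $\beta\in\Phi$, and membership of an element of $R$ in an ideal can be tested after localising at each maximal ideal $\mathfrak m$ of $R$. Over the local ring $R_{\mathfrak m}$ one has $\SL(2,R_{\mathfrak m})=E(2,R_{\mathfrak m})$, so there $t$ is a product of root elements $x_{\pm\alpha}(r)$, $r\in R_{\mathfrak m}$; and since $\pm\alpha\in\Delta$ we have $\sigma_{\pm\alpha}R_{\mathfrak m}=R_{\mathfrak m}$, so by the previous paragraph, applied over $R_{\mathfrak m}$ to the localised net, each such factor --- and therefore $t$ --- preserves $L(\sigma)\otimes_R R_{\mathfrak m}$ (which is the net subalgebra over $R_{\mathfrak m}$ attached to the localised net). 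Hence $(\leftact{t}{v})^\beta\in\sigma_\beta R_{\mathfrak m}$ for every $\mathfrak m$, and so $(\leftact{t}{v})^\beta\in\sigma_\beta$. Finally $t^{-1}=t_\alpha^{\zeta,\eta}(-\xi)$ is a transvection of the same form (the relevant rank-one matrix squares to zero), so the opposite inclusion holds as well and $t\in S(\sigma)$. Since all generators of $\hat E(\sigma)$ then lie in the subgroup $S(\sigma)$, we conclude $\hat E(\sigma)\le S(\sigma)$.
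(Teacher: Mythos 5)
Your proof is correct. The paper itself dismisses this lemma with ``direct calculation'', so the only point worth comparing is how you handle the generators $t_\alpha^{\zeta,\eta}(\xi)$, which is indeed the one place where something could go wrong: your verification for the root generators $x_\alpha(\xi)$, $\xi\in\sigma_\alpha$, is exactly the expected direct computation with the adjoint action (and you correctly isolate the only delicate term, $\gamma=\alpha$, $i=2$, where the net axiom cannot be chained through $0$). For the transvections you sidestep a direct computation by checking membership $(\leftact{t}{v})^\beta\in\sigma_\beta$ locally, using $\SL(2,R_{\fr{M}})=E(2,R_{\fr{M}})$ and $\sigma_{\pm\alpha}R_{\fr{M}}=R_{\fr{M}}$; this is sound (the adjoint action commutes with localisation, and an element of $R$ lies in an ideal iff it does so in every $R_{\fr{M}}$), but it is a detour compared with the direct argument the paper presumably has in mind: since $\alpha\in\Delta$, Lemma~\ref{EqualIdeals} chained along the (unbroken) $\alpha$-string through any root $\beta$ shows that $\sigma$ is constant on each $\alpha$-string, so $L(\sigma)$ is a sum of $\varphi_\alpha(\SL(2,R))$-invariant submodules (the $R\sigma_\beta$-span of each string, together with $D\oplus Re_\alpha\oplus Re_{-\alpha}$), whence the whole image of $\varphi_\alpha$ --- in particular every $t_\alpha^{\zeta,\eta}(\xi)$ --- lies in $S(\sigma)$ with no localisation needed. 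Your route buys independence from this string-invariance observation at the cost of the local-global machinery; both are complete proofs, and your explicit check that $t_\alpha^{\zeta,\eta}(\xi)^{-1}=t_\alpha^{\zeta,\eta}(-\xi)$ (so that the subset stabiliser condition $gL(\sigma)=L(\sigma)$ really holds) is a detail worth having on record.
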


\begin{proof}
	Direct calculation.
\end{proof}

\subsection{Localisation}

Let $s\in R$ be a non nilpotent element. Let $R_s=R[s^{-1}]$ be the principal localisation. We denote by $F_s$ the natural localisation ho\-mo\-mo\-rphism $\map{F_s}{R}{R_s}$, and also the corresponding group homomorphism $\map{F_s}{G(R)}{G(R_s)}$.

Similarly, if $\fr{P}$ be a prime ideal, then we denote by $R_{\fr{P}}$ the localisation of the ring at this ideal; by $F_{\fr{P}}$ we denote the corresponding homomorphism.

We need the following lemma.

\begin{lem}
	\label{LocalisationForS} Let $\sigma$ be a net of ideals in $R$. Then
	$$
	S(\sigma)=\bigcap_{\M\in\Max(R)} F_{\M}^{-1}\big(S(F_{\M}(\sigma))\big),
	$$
	where $\Max(R)$ is the set of all maximal ideals in $R$.
\end{lem}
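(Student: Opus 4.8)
The plan is to prove the two inclusions separately. The inclusion $S(\sigma)\subseteq\bigcap_{\M}F_{\M}^{-1}(S(F_{\M}(\sigma)))$ is the easy direction: if $g\in G(\Phi,R)$ stabilises $L(\sigma)$, then applying the localisation homomorphism $F_{\M}$ and using that $F_{\M}$ is a ring homomorphism compatible with the adjoint action (so that $F_{\M}(\leftact{g}{v})=\leftact{F_{\M}(g)}{F_{\M}(v)}$ for $v\in L(\Phi,R)$), one checks that $F_{\M}(g)$ stabilises the $R_{\M}$-submodule generated by $F_{\M}(L(\sigma))$. It remains to observe that this submodule is exactly $L(F_{\M}(\sigma))$, which follows from the definition $L(\sigma)=D\oplus\bigoplus_\alpha\sigma_\alpha e_\alpha$ together with $F_{\M}(\sigma)_\alpha=F_{\M}(\sigma_\alpha)$ and the fact that localisation commutes with finite direct sums. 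Hence $g\in F_{\M}^{-1}(S(F_{\M}(\sigma)))$ for every maximal ideal $\M$.

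For the reverse inclusion, suppose $g\in G(\Phi,R)$ is such that $F_{\M}(g)\in S(F_{\M}(\sigma))$ for all $\M\in\Max(R)$; I must show $\leftact{g}{L(\sigma)}=L(\sigma)$. Since $g$ is invertible it suffices to show $\leftact{g}{L(\sigma)}\subseteq L(\sigma)$, i.e. that for each basis-type generator $w$ of $L(\sigma)$ (namely $w=h_i$ and $w=\xi e_\alpha$ with $\xi\in\sigma_\alpha$) the element $v=\leftact{g}{w}\in L(\Phi,R)$ actually lies in $L(\sigma)$. Writing $v$ in the Chevalley basis, I need $v^\alpha\in\sigma_\alpha$ for every $\alpha\in\Phi$ (the toric coordinates $v^i$ are unconstrained since $D\subseteq L(\sigma)$). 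Fix $\alpha$ and the coordinate $x:=v^\alpha\in R$. The hypothesis gives, for each maximal ideal $\M$, that $F_{\M}(v)=\leftact{F_{\M}(g)}{F_{\M}(w)}$ lies in $L(F_{\M}(\sigma))$, so its $\alpha$-coordinate $F_{\M}(x)=x/1$ lies in the ideal $F_{\M}(\sigma_\alpha)=\sigma_\alpha R_{\M}$ of $R_{\M}$.

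Thus the problem reduces to the following purely commutative-algebra fact: if an element $x\in R$ satisfies $x/1\in IR_{\M}$ for every $\M\in\Max(R)$, where $I\unlhd R$ is a fixed ideal, then $x\in I$. This is the standard local–global principle for ideal membership: consider the ideal $J=(I:x)=\{r\in R: rx\in I\}$; for each $\M$, the condition $x/1\in IR_{\M}$ means there is $s\notin\M$ with $sx\in I$, so $J\not\subseteq\M$; since $J$ is contained in no maximal ideal, $J=R$, whence $x=1\cdot x\in I$. Applying this with $I=\sigma_\alpha$ gives $v^\alpha\in\sigma_\alpha$ for all $\alpha$, hence $v\in L(\sigma)$, completing the argument. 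The only mild subtlety — and the step I would be most careful about — is the identification of the localised algebra $F_{\M}(L(\sigma))\cdot R_{\M}$ with $L(F_{\M}(\sigma))$ and the compatibility of the adjoint action with localisation; both are routine given that the Chevalley basis is defined over $\Z$ and localisation is exact, but they are exactly the points where the definitions in Subsections~\ref{LieAlgebras} and~\ref{NetSubgroups} must be invoked precisely.
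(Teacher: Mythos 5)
Your proof is correct in substance and follows essentially the same route as the paper: the forward inclusion is formal, and the reverse inclusion reduces to the standard local--global principle that $x\in I$ whenever $x/1\in IR_{\M}$ for every maximal ideal $\M$ (the paper packages exactly this as the equality $L(\sigma)=\bigcap_{\M}L\big(F_{\M}^{-1}(F_{\M}(\sigma))\big)$ together with $F_{\M}^{-1}\big(S(F_{\M}(\sigma))\big)\le S\big(F_{\M}^{-1}(F_{\M}(\sigma))\big)$). One small caution: the step ``since $g$ is invertible it suffices to show $\leftact{g}{L(\sigma)}\subseteq L(\sigma)$'' is not a valid general principle, since an invertible operator can map a submodule properly into itself; but the gap closes in one line, because each $S(F_{\M}(\sigma))$ is a group, so your hypothesis holds for $g^{-1}$ as well, and the same coordinate argument gives $\leftact{g^{-1}}{L(\sigma)}\subseteq L(\sigma)$ and hence the required equality.
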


\begin{proof}
	Inclusion of the left-hand side to the right-hand side is obvious. Further it is easy to see that $$
	F_{\M}^{-1}\big(S(F_{\M}(\sigma))\big)\le S\big(F_{\M}^{-1}(F_{\M}(\sigma))\big)
	$$
	for any $\M\in\Max(R)$; and also that
	$$
	L(\sigma)=\bigcap_{\M\in\Max(R)} L\big(F_{\M}^{-1}(F_{\M}(\sigma))\big)\tc
	$$
	which implies the converse inclusion.
\end{proof}

\subsection{Affine schemes and generic elements}

The functor $G(\Phi,-)$ from the category of rings to the category of groups is an affine group scheme (a Chevalley--Demazure scheme). This means that its composition with the forgetful functor to the category of sets is representable, i.e.,
$$
G(\Phi,R)=\Hom (\Z[G],R).
$$
The ring $\Z[G]$ here is called the {\it ring of regular functions} on the scheme $G(\Phi,-)$. 

An element $g_{\gen}\in G(\Phi,\Z[G])$ that corresponds to the identity ring ho\-mo\-mo\-rphism is called the {\it generic element} of the scheme $G(\Phi,-)$. This element has a universal property: for any ring $R$ and for any $g\in G(\Phi,R)$, there exists a unique ring homomorphism
$$
\map{f}{\Z[G]}{R},
$$
such that $f_*(g_{\gen})=g$. For details about application of the method of generic elements
to the problems similar to that of ours, see the paper of A.~V.~Ste\-pa\-nov~\cite{StepUniloc}.

Further for a commutative ring $R$ we set 
$$
R[G]=R\otimes_{\Z} \Z[G].
$$
The image of the element $g_{\gen}$ under the group homomorphism induced by the natural ring homomorphism $\Z[G]\to R[G]$ we denote by 
$$
g_{\gen,R}\in G(R[G]).
$$
This element also has a universal property: for any $R$\nobreakdash-algebra $A$ and any $g\in G(\Phi,A)$ there exists a unique $R$-algebra homomorphism 
$
\map{f}{R[G]}{A}
$
such that $f_*(g_{\gen,R})=g$. 

In particular, {\it augmentation homomorphism} is a unique $R$-algebra ho\-mo\-mo\-rphism 
$
\map{i}{R[G]}{R}
$
such that $i_*(g_{\gen,R})=e$. It's kernel
$$
I_{\aug,R}=\Ker i
$$
is called the {\it augmentation} ideal.

If $\xi\in R[G]$ and $g\in G(\Phi,A)$ for some $R$-algebra $A$, then we set $\xi(g)=f(\xi)$, where $f$ is the $R$-algebra homomorphism as above. Therefore, elements of the ring $R[G]$ can be considered as functions. 

Note that if $\{\sigma_i\}$ is a family of ideals in some ring, then we have
$$
\bigcap_i S(\sigma_i)\le S\Big(\bigcap_i \sigma_i\Big)\tp
$$ 
This allows us to introduce the following notation. Let $\sigma$ be a net of ideals in the ring $R$. Denote by $\dot{\sigma}$ the smallest net of ideals in $R[G]$ such that $g_{\gen,R}\in S(\dot{\sigma})$ and for any $\alpha\in\Phi$ we have $\sigma_\alpha\otimes 1\sub \dot{\sigma}_\alpha$. It can be obtained as intersection of all nets with such properties. 

We need the following lemma.

\begin{lem}\label{Sgeneric} Let $\sigma$ be a net of ideals in the ring $R$; and let $g\in S(\sigma)$. Let $\map{f}{R[G]}{R}$ be such $R$-algebra homomorphism that $f_*(g_{\gen,R})=g$. Then $f(\dot{\sigma})\le\sigma$.
\end{lem}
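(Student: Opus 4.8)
The plan is to exploit the defining (universal) property of $\dot\sigma$ together with the hypothesis $g\in S(\sigma)$. By definition, $\dot\sigma$ is the \emph{smallest} net of ideals in $R[G]$ satisfying the two conditions: $g_{\gen,R}\in S(\dot\sigma)$ and $\sigma_\alpha\otimes 1\sub\dot\sigma_\alpha$ for all $\alpha\in\Phi$. Since it is obtained as the intersection of all nets with these properties, to prove $f(\dot\sigma)\le\sigma$ it suffices to exhibit \emph{one} net of ideals $\tau$ in $R[G]$ with $g_{\gen,R}\in S(\tau)$ and $\sigma_\alpha\otimes 1\sub\tau_\alpha$, for which moreover $f(\tau)\le\sigma$; then $\dot\sigma\le\tau$ forces $f(\dot\sigma)\le f(\tau)\le\sigma$. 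The natural candidate is the preimage net $\tau_\alpha=f^{-1}(\sigma_\alpha)$ (which is automatically an ideal, and a net since $f$ is a ring homomorphism and preimages respect the net axioms — axiom (1) because $f(\tau_\alpha)f(\tau_\beta)\sub\sigma_\alpha\sigma_\beta\sub\sigma_{\alpha+\beta}$, axiom (2) because $\sigma_\alpha=R$ for $\alpha\in\Delta$ pulls back to $R[G]$). Clearly $\sigma_\alpha\otimes 1\sub f^{-1}(\sigma_\alpha)$ because $f(\sigma_\alpha\otimes 1)=\sigma_\alpha$ (here using that $f$ is an $R$-algebra homomorphism, so it restricts to the identity on $\sigma_\alpha\otimes 1$), and $f(\tau_\alpha)=f(f^{-1}(\sigma_\alpha))\le\sigma_\alpha$ trivially.

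Thus the one nontrivial point is to verify that $g_{\gen,R}\in S(\tau)$ for $\tau_\alpha=f^{-1}(\sigma_\alpha)$, i.e.\ that the generic element stabilises $L(\tau)=D\oplus\bigoplus_\alpha f^{-1}(\sigma_\alpha)e_\alpha$ inside $L(\Phi,R[G])$. I would deduce this from the hypothesis $g\in S(\sigma)$ by functoriality of the adjoint representation together with the universal property of $g_{\gen,R}$. Concretely, the action of $g_{\gen,R}$ on a basis vector $e_\alpha$ is some element $\leftact{g_{\gen,R}}{e_\alpha}\in L(\Phi,R[G])$ whose Chevalley coordinates are regular functions, i.e.\ elements of $R[G]$; applying the $R$-algebra homomorphism $f_*$ on Lie algebras (induced by $f$) sends this to $\leftact{g}{e_\alpha}\in L(\Phi,R)$, because $f_*(g_{\gen,R})=g$ and the adjoint action is natural in the ring. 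Since $g\in S(\sigma)$, the vector $\leftact{g}{e_\alpha}$ lies in $L(\sigma)$ for each $\alpha$ with $\alpha\in\Delta$ (where $e_\alpha\in L(\sigma)$ as $\sigma_\alpha=R$) — and more to the point, for each $\alpha$ the coordinate $(\leftact{g}{e_\alpha})^\beta=(\leftact{g}{v})^\beta$ of any $v\in L(\sigma)$ lies in $\sigma_\beta$. I would phrase the needed statement as: for every $v\in L(\tau)$ and every $\beta\in\Phi$, the $\beta$-coordinate $(\leftact{g_{\gen,R}}{v})^\beta$ lies in $\tau_\beta=f^{-1}(\sigma_\beta)$; equivalently $f\big((\leftact{g_{\gen,R}}{v})^\beta\big)\in\sigma_\beta$. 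Expanding $v=\sum_\alpha c_\alpha e_\alpha + (\text{toric part})$ with $c_\alpha\in\tau_\alpha$, bilinearity reduces this to the coordinates of $\leftact{g_{\gen,R}}{e_\alpha}$ and $\leftact{g_{\gen,R}}{h_i}$; applying $f$ and using naturality, $f\big((\leftact{g_{\gen,R}}{e_\alpha})^\beta\big)=(\leftact{g}{e_\alpha})^\beta$, and since $e_\alpha$ is, up to the scalar in $\tau_\alpha=f^{-1}(\sigma_\alpha)$ which maps into $\sigma_\alpha$, an element on which we can test membership: when $\alpha\in\Delta$ we use $e_\alpha\in L(\sigma)$ and $g\in S(\sigma)$ directly, and when $\alpha\notin\Delta$ we use $c_\alpha\in f^{-1}(\sigma_\alpha)$ so $f(c_\alpha)e_\alpha\in\sigma_\alpha e_\alpha\sub L(\sigma)$ and again $g\in S(\sigma)$ applies, the toric part being sent into $D\le L(\sigma)$ automatically. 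Collecting terms, $f\big((\leftact{g_{\gen,R}}{v})^\beta\big)=\big(\leftact{g}{\,\sum f(c_\alpha)e_\alpha + (\text{toric})}\big)^\beta\in\sigma_\beta$, as $\sum f(c_\alpha)e_\alpha+(\text{toric})\in L(\sigma)$ and $g\in S(\sigma)$. Hence $g_{\gen,R}\in S(\tau)$.

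The step I expect to be the main obstacle — really the only place requiring care — is pinning down the naturality square for the adjoint action precisely: that the Chevalley coordinates of $\leftact{g_{\gen,R}}{e_\alpha}$ are genuine regular functions in $R[G]$ and that applying $f$ to them computes the coordinates of $\leftact{g}{e_\alpha}$. This is exactly the content of the universal property of $g_{\gen,R}$ combined with the fact that $G(\Phi,-)$ acts on $L(\Phi,-)$ by a morphism of functors (the adjoint representation is defined integrally, compatibly with base change); once this is spelled out, everything else is the bookkeeping above. It may be cleanest to record the naturality as a one-line remark — for any $R$-algebra homomorphism $\map{f}{R[G]}{R}$ and any $v\in L(\Phi,R[G])$ one has $f_*(\leftact{g_{\gen,R}}{v})=\leftact{f_*(g_{\gen,R})}{f_*(v)}=\leftact{g}{f_*(v)}$ — and then the inclusion $f(\dot\sigma)\le\sigma$ drops out from minimality of $\dot\sigma$ as described.
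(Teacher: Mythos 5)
Your proposal is correct and follows essentially the same route as the paper: the paper's proof also takes the preimage net $f^{-1}(\sigma)$, observes that $g_{\gen,R}\in S(f^{-1}(\sigma))$ and $\sigma_\alpha\otimes 1\sub f^{-1}(\sigma)_\alpha$, and concludes $\dot{\sigma}\le f^{-1}(\sigma)$, i.e.\ $f(\dot{\sigma})\le\sigma$. The only difference is that you spell out the naturality of the adjoint action, which the paper leaves as ``easy to see''.
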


\begin{proof}
	It is easy to see that $f^{-1}(\sigma)$ is a net of ideals in $R[G]$ such that $g_{\gen,R}\in S(f^{-1}(\sigma))$ and for any $\alpha\in\Phi$ we have
	$$
	\sigma_\alpha\otimes 1\sub f^{-1}(\sigma)_\alpha.
	$$
	Hence $\dot{\sigma}\le f^{-1}(\sigma)$, which means exactly that $f(\dot{\sigma})\le\sigma$.
\end{proof}

\section{Statements of the main results}\label{statements}

In the rest of the paper, in particular in all the theorems below, we assume that the following condition holds true. 
\begin{equation}
	\begin{split}
		&\text{For any }\gamma\in\Phi\sm\Delta\text{ there exists }\alpha\in\Delta\text{ such that }\alpha+\gamma\in\Phi\tc\\& \text{and the corresponding }
		\text{structural constant } c_{\alpha,\gamma} \text{ of the Chevalley algebra}\\ & \text{is invertible in the ring } R\tp
	\end{split}\tag{$*$}
\end{equation}

If $\sigma$ is a net of ideals in $R$ and $I\unlhd R$, then we set $(\sigma\cap I)_\alpha=\sigma_\alpha\cap I$. The collection of ideals $\sigma\cap I$ satisfies only the first condition in the definition of a net; nevertheless, we consider the subgroup
$$
E(\sigma\cap I)=\<x_\alpha(\xi)\colon \alpha\in\Phi,\, \xi\in\sigma_\alpha\cap I\>\le G(\Phi,R)\tp
$$

\begin{thm}\label{Jacobson} Let $\sigma$ be a net of ideals. Let $J$ be the Jacobson radical of the ring $R$. Then
	$$
	S(\sigma)\cap G(\Phi,R,J)\le T(\Phi,R,J)E(\sigma\cap J)\tp
	$$
\end{thm}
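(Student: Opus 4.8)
The plan is to work locally and reduce to the case of a local ring, then analyse how an element of $S(\sigma)$ that reduces to the identity modulo $J$ decomposes. First I would invoke Lemma~\ref{LocalisationForS} to pass to the localisations $R_{\M}$ at maximal ideals; however, since the Jacobson radical does not localise well (the radical of $R_{\M}$ is larger than $F_{\M}(J)$), the cleaner route is to use the method of generic elements from Subsection on affine schemes. Namely, take $g\in S(\sigma)\cap G(\Phi,R,J)$ and choose $f\colon R[G]\to R$ with $f_*(g_{\gen,R})=g$; by Lemma~\ref{Sgeneric} we have $f(\dot\sigma)\le\sigma$. The strategy is then to prove the analogous decomposition for the generic element $g_{\gen,R}$ over a suitable localisation of $R[G]$, where the ring \emph{is} local, and push it forward along $f$.

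The main body of the argument is the local case: assume $R$ is local with maximal ideal $\fr m = J$, and let $g\in S(\sigma)\cap G(\Phi,R,J)$. Here I would use a Gauss-decomposition / LU-type argument. Since $g\equiv e \pmod J$, in the big cell $g$ can be written as $g = u^- h u^+$ with $u^{\pm}\in U^{\pm}(\Phi, R, J)$ (unipotent radicals reduced mod $J$) and $h\in T(\Phi,R,J)$, because the big cell is open and contains $e$, and any element congruent to $e$ mod the maximal ideal lies in it. The key point is to show that the root unipotents appearing in $u^{\pm}$ actually have their parameters in the correct ideals $\sigma_\alpha\cap J$. This is where the condition that $g$ stabilises $L(\sigma)$ enters: writing out the action of $g$ on the basis vectors $e_\beta$ for $\beta\in\Delta$ (where $\sigma_\beta = R$) and on $e_\gamma$ for $\gamma\in\Phi\sm\Delta$, and using condition~$(*)$ to produce, for each such $\gamma$, a root $\alpha\in\Delta$ with invertible structure constant $c_{\alpha,\gamma}$, one extracts that the $X_\gamma$-coordinate of $u^{\pm}$ must lie in $\sigma_\gamma$. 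Combined with $u^{\pm}\in G(\Phi,R,J)$, each coordinate lies in $\sigma_\gamma\cap J$, so $u^{\pm}\in E(\sigma\cap J)$; and for roots in $\Delta$ the factor is automatically in $E(\sigma\cap J)$ since there $\sigma_\gamma = R$. Finally one must check $h\in T(\Phi,R,J)$, which is immediate from $h\equiv e\pmod J$ and $h\in T$. Hence $g\in T(\Phi,R,J)E(\sigma\cap J)$ in the local case.

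The passage from local to global is handled by the generic-element trick: localise $R[G]$ at the multiplicative set defined by the "big cell" function evaluated at $g_{\gen,R}$ (which is a unit precisely because $g\equiv e$ modulo the radical after base change), so that over this localisation $g_{\gen,R}$ admits the Gauss decomposition above with parameters in $\dot\sigma_\alpha$ (localised); applying $f$ and Lemma~\ref{Sgeneric} gives the decomposition of $g$ over $R$ with parameters in $f(\dot\sigma)\cap J \le \sigma\cap J$. One must be slightly careful that the decomposition is defined over the localisation rather than over $R[G]$ itself, but since $f$ factors through that localisation (as $g$ lies in the big cell over $R$), this causes no trouble.

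I expect the main obstacle to be the extraction of the ideal membership of the root parameters from the stabiliser condition: one has to compute the adjoint action of a general element of $U^{\pm}(\Phi,R,J)$ on the generators of $L(\sigma)$, isolate the coefficients along $e_\gamma$ for $\gamma\notin\Delta$, and argue \emph{inductively on the height of $\gamma$} relative to $\Delta$ — using at each step a root $\alpha\in\Delta$ supplied by $(*)$ with $c_{\alpha,\gamma}\in R^*$ to solve for the unknown parameter in terms of parameters already known to lie in $\sigma$. Making this induction precise (choosing an appropriate ordering on $\Phi\sm\Delta$ compatible with adding roots from $\Delta$, and controlling the commutator corrections) is the technical heart of the proof; everything else is bookkeeping with Gauss decomposition and the universal property of $g_{\gen,R}$.
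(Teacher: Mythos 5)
There is a genuine gap, and it sits exactly where you flag ``the technical heart'': the extraction of $\xi_\gamma\in\sigma_\gamma$ from the stabiliser condition. Your proposed induction on the height of $\gamma$ does not get off the ground, because the coefficient of $e_{\alpha+\gamma}$ in $\leftact{g}{e_\alpha}$ is not $\eps c_{\alpha,\gamma}\xi_\gamma$ alone but $\eps c_{\alpha,\gamma}\xi_\gamma$ plus correction terms which are products $\xi_{\gamma_1}\cdots\xi_{\gamma_k}$ with $\gamma_1+\dots+\gamma_k=\gamma$, where the $\gamma_i$ come from \emph{both} $u^+$ and $u^-$. Since roots of both signs occur, there is no well-founded ordering of $\Phi\sm\Delta$ for which these $\gamma_i$ are ``already handled'' (e.g.\ $\gamma=\gamma_2+\gamma_1$ with $\gamma_1\in\Delta$ and $\gamma_2=\gamma-\gamma_1$ is symmetric to the situation you are trying to resolve, and by Lemma~\ref{EqualIdeals} the two ideals even coincide), so the induction is circular. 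What the correction terms \emph{do} satisfy is that they lie in $J^2$, and this dictates the actual structure of the argument in the paper: first prove the statement for an ideal $I$ with $I^2=0$ (where the extraction is exactly your clean formula), then induct on the nilpotency class of $I$ by reducing modulo $I^{k-1}$ and lifting, and only then treat the Jacobson radical itself. That last passage is the second missing ingredient: from the nilpotent case one only gets $\xi_\gamma\in\sigma_\gamma+J^k$ for all $k$, and to conclude $\xi_\gamma\in\sigma_\gamma$ one needs $\sigma_\gamma=\bigcap_k(\sigma_\gamma+J^k)$, which is the Krull intersection theorem and requires $R$ Noetherian; the paper then removes the Noetherian hypothesis by passing to a finitely generated subring $R_1$ containing all the data and inverting $R_1\cap R^*$ (so that $R_2\cap J$ lands in the radical of the Noetherian ring $R_2$). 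Your proposal contains no counterpart of either step, and without them the argument cannot close for a general commutative ring -- indeed even for a non-Noetherian local ring your scheme would only yield $\xi_\gamma\in\bigcap_k(\sigma_\gamma+J^k)$.

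A secondary point: the reduction to the local case via the generic element and a big-cell localisation of $R[G]$ buys you nothing here and is not how the paper proceeds. Since $g\in G(\Phi,R,J)$ already lies in the big cell over $R$ itself, the Gauss-type decomposition $g=h\prod_\gamma x_\gamma(\xi_\gamma)$ with $\xi_\gamma\in J$ exists directly over $R$ (the paper cites Abe--Suzuki for this and uses uniqueness of such an expression to match coefficients after reduction mod $J^k$); moreover the local case is in no way easier, because the maximal ideal of a local ring is not nilpotent and the Krull-intersection/Noetherian issue is unchanged. The generic element machinery is used in the paper only later (for the commutator formula), not for Theorem~\ref{Jacobson}.
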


We denote by $N(\Phi,-)\le G(\Phi,-)$ the scheme normaliser of the group subscheme $T(\Phi,-)$. There is a natural homomorphism from the scheme $N(\Phi,-)$ to the Weyl group $W(\Phi)$ viewed as a constant scheme. This homomorphism is surjective on points; and its kernel is the subscheme $T(\Phi,R)$.

We denote by $\ovl{W}(\Phi)$ the image of the group $N(\Phi,\Z)$ under homomorphism $G(\Phi,\Z)\to G(\Phi,R)$ induced by the unique ring homomorphism $\Z\to R$. We have dropped the letter $R$ in this notation because this group almost independent of the ring: the group $\ovl{W}(\Phi)$ is isomorphic to the group $N(\Phi,\Z)$ if $2\ne 0$ on $R$; otherwise, it is isomorphic to the group $W(\Phi)$. 

Further let $\ovl{W}(\Phi,\sigma)\le \ovl{W}(\Phi)$ be the preimage of the group 
$$
W(\Phi,\sigma)=\{w\colon \forall\alpha\in\Phi \; \sigma_{w\alpha}=\sigma_\alpha\}\le W(\Phi)\tp
$$
It is easy to see that $\ovl{W}(\Phi,\sigma)=\ovl{W}(\Phi)\cap S(\sigma)$.

\begin{thm}\label{local} Let $R$ be a local ring; and let $\sigma$ be a net of ideals in it. Then
	$$
	S(\sigma)=T(\Phi,R)E(\sigma)\ovl{W}(\Phi,\sigma)\tp
	$$
\end{thm}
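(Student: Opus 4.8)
The plan is to prove the reverse inclusion $S(\sigma)\le T(\Phi,R)E(\sigma)\ovl W(\Phi,\sigma)$, since the inclusion $T(\Phi,R)E(\sigma)\ovl W(\Phi,\sigma)\le S(\sigma)$ is immediate: each of the three factors lies in $S(\sigma)$ ($E(\sigma)\le\hat E(\sigma)\le S(\sigma)$, the torus stabilises $L(\sigma)$ since it preserves each weight line, and $\ovl W(\Phi,\sigma)=\ovl W(\Phi)\cap S(\sigma)$ by the remark preceding the statement). So take $g\in S(\sigma)$. First I would use the Bruhat-type decomposition available over a local ring: since $R$ is local, $G(\Phi,R)=E(\Phi,R)T(\Phi,R)$ and moreover every element of $E(\Phi,R)$ can be written, via the Bruhat decomposition relative to a fixed Borel, in the form $u_1\,n_w\,t\,u_2$ with $u_1,u_2$ in (products of) unipotent root subgroups, $t\in T(\Phi,R)$, and $n_w$ a fixed lift to $\ovl W(\Phi)$ of a Weyl group element $w$. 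Absorbing $t$ and rearranging, write $g=u^-\,n_w\,u^+$ (or the analogous normal form) where $u^-$ is a product of negative root elements $x_\beta(\xi_\beta)$ and $u^+$ a product of positive ones; the goal is to show that after multiplying $g$ on the left and right by suitable elements of $E(\sigma)$ and of $T(\Phi,R)$ we may force $u^{\pm}$ to lie in $E(\sigma)$ and $w$ to lie in $W(\Phi,\sigma)$.

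The heart of the argument is the following bookkeeping. I would process the root elements occurring in $u^{\pm}$ one at a time, in an order refining the height function, and for each root $\gamma$ appearing with parameter $\xi_\gamma$, examine the action of $g$ on the basis vector $e_\gamma$ (or rather on generators of $L(\sigma)$) and read off the coefficient along some fixed $e_\delta$. The condition $g\in S(\sigma)$ means $\leftact{g}{L(\sigma)}=L(\sigma)$, so all such coefficients must lie in the appropriate ideals $\sigma_\delta$. Using condition $(*)$ — which guarantees, for each $\gamma\in\Phi\sm\Delta$, a root $\alpha\in\Delta$ with $\alpha+\gamma\in\Phi$ and invertible structure constant $c_{\alpha,\gamma}$ — one can isolate the parameter $\xi_\gamma$ as (a unit multiple of) such a coefficient, hence conclude $\xi_\gamma\in\sigma_\gamma$; and for $\gamma\in\Delta$ we have $\sigma_\gamma=R$ automatically, so $x_\gamma(\xi_\gamma)\in E(\sigma)$ trivially. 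Peeling these factors off $g$ from the left and right (which stays inside the coset $E(\sigma)\,g\,E(\sigma)$), we reduce to $g=n_w$ modulo $T(\Phi,R)E(\sigma)$. Finally, $n_w\in S(\sigma)$ forces $\leftact{n_w}{\sigma_\alpha e_\alpha}=\sigma_\alpha e_{w\alpha}\sub L(\sigma)$, i.e. $\sigma_\alpha\sub\sigma_{w\alpha}$ for all $\alpha$, and applying this to $w^{-1}$ as well gives $\sigma_{w\alpha}=\sigma_\alpha$, so $w\in W(\Phi,\sigma)$ and $n_w\in\ovl W(\Phi,\sigma)$, completing the proof.

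One technical point to handle carefully: the order in which root elements are stripped off matters, because conjugating $x_\gamma(\xi_\gamma)$ past $n_w$ or past other root elements produces Chevalley commutator correction terms, and one must check these corrections already lie in $E(\sigma)$ before they are used — this is why the induction should run along height (or more precisely along a total order compatible with $w$ and with addition of roots), so that every correction term involves a root strictly ``later'' than the one being cleared, or else lies in $\Delta$ where the ideal is all of $R$. A second point: over a local ring one should be slightly careful about which normal form one uses — I would take the decomposition of $g$ as an element of $G(\Phi,R)=E(\Phi,R)T(\Phi,R)$ together with the fact that $E(\Phi,R)$ over a local ring is generated with uniqueness properties coming from the big-cell/Bruhat structure, possibly invoking the standard description of $\SL_2(R)$-images to handle the torus part generated by the $h_\alpha(\eps)$.

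The main obstacle I expect is the orchestration of the inductive clearing step: making precise the order on roots so that (i) condition $(*)$ supplies, at each stage, a witness $\alpha\in\Delta$ whose commutator pairing with the current $\gamma$ extracts $\xi_\gamma$ with an invertible coefficient, and (ii) all commutator correction terms generated along the way have already been shown to belong to $E(\sigma)$. Everything else — the two easy inclusions, the reduction to $n_w$, and the final identification $w\in W(\Phi,\sigma)$ — is routine once this combinatorial scaffolding is set up.
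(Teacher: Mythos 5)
Your two easy steps (the inclusion $T(\Phi,R)E(\sigma)\ovl{W}(\Phi,\sigma)\le S(\sigma)$ and the identification of the Weyl part: $n_w\in S(\sigma)$ forces $\sigma_\alpha=\sigma_{w\alpha}$) are fine, and your idea of extracting $\xi_\gamma$ from a coefficient of $\leftact{g}{e_\alpha}$ via condition $(*)$ does occur in the paper — but only in the nilpotent situation. The scaffolding you build around it has a genuine gap: the Bruhat-type normal form you start from does not exist over a local ring. Already in $\SL(2,R)$ with $R$ local and $0\ne m\in\M$, the matrix $\begin{pmatrix} m & 1\\ -1 & 0\end{pmatrix}$ lies neither in the big cell $U^-TU^+$ (its $(1,1)$ entry is not a unit) nor in $U^-n\,T\,U^+$ (all such products have $(1,1)$ entry $0$); likewise $x_{-\alpha}(m)$ lies in no $B\,n_w\,B$. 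Bruhat decomposition is a residue-field phenomenon: what one actually has is $\rho_{\M}(g)$ decomposed over $R/\M$, a lift of that decomposition, and an error term in the principal congruence subgroup $G(\Phi,R,\M)$. Your proposal never isolates this congruence error, and it is precisely there that the real work lies.

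Indeed, the paper's proof is organized exactly along that fault line: over the residue field it does not use a root-by-root clearing but the algebraic-group statement $\tilde{G}(\Delta',K)=GG(\Delta',K)\ovl{W}(\Phi,\Delta')$ (Lemmas~\ref{ConnectedComponentOverField} and~\ref{DecomposeForField}, via identification of the connected component of the stabilizer), then Lemma~\ref{Weyl} to see that the Weyl factor preserves $\sigma$; after multiplying by elements of $\ovl{W}(\Phi,\sigma)$ and $T(\Phi,R)E(\Delta',R)$ one is reduced to $g\in G(\Phi,R,\M)$, and this remaining case is Theorem~\ref{Jacobson}. That theorem is not a routine coefficient extraction: your trick of reading off $c_{\alpha,\gamma}\xi_\gamma$ as a unit multiple of a single coefficient only works when the higher-order corrections vanish, i.e.\ modulo $\M^2$ (the base case $I^2=0$ of Lemma~\ref{Jnilpotent}); for a general local ring the paper has to pass to the quotients $R/J^k$, use uniqueness of the Gauss decomposition, and invoke the Krull intersection theorem after a reduction to Noetherian subrings. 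Also note that even on the big cell the coefficient of $\leftact{g}{e_\alpha}$ along $e_{\alpha+\gamma}$ mixes contributions from $u^-$, the torus, the Weyl lift and $u^+$, so the ``order along heights'' bookkeeping you sketch does not by itself isolate $\xi_\gamma$ once a nontrivial $n_w$ is present. So the proposal, as it stands, assumes a decomposition that fails and omits the congruence-subgroup argument that constitutes the main content of the paper's proof.
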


\begin{thm}\label{normal} Let $\sigma$ be the net of ideals in the ring $R$. Then $\hat{E}(\sigma)$ is a normal subgroup of $S(\sigma)$.
\end{thm}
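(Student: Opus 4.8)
The plan is to follow the strategy of Taddei's proof of normality of the elementary subgroup~\cite{Taddei}, realised as a localisation argument resting on Theorems~\ref{Jacobson} and~\ref{local}. Since $\hat{E}(\sigma)$ is a subgroup of $S(\sigma)$, it suffices to show that $\leftact{g}{u}\in\hat{E}(\sigma)$ for every $g\in S(\sigma)$ and every $u$ in the generating set $\{x_\gamma(\xi):\gamma\in\Phi,\ \xi\in\sigma_\gamma\}\cup\{t_{\alpha}^{\zeta,\eta}(\xi):\alpha\in\Delta,\ \xi,\zeta,\eta\in R\}$ of $\hat{E}(\sigma)$; as $S(\sigma)$ is closed under inversion this yields normality. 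By the method of generic elements (cf.~\cite{StepUniloc}) this reduces to the same assertion for the generic element $g_{\gen,R}\in S(\dot{\sigma})$ over $R[G]$ and for the generators of $\hat{E}(\dot{\sigma})$: given $g\in S(\sigma)$, take the $R$\nobreakdash-algebra homomorphism $f\colon R[G]\to R$ with $f_*(g_{\gen,R})=g$; then $f(\dot{\sigma})\le\sigma$ by Lemma~\ref{Sgeneric}, so $f_*(\hat{E}(\dot{\sigma}))\sub\hat{E}(\sigma)$, and since $\sigma_\gamma\otimes 1\sub\dot{\sigma}_\gamma$ every generator of $\hat{E}(\sigma)$ is $f_*(u)$ for a generator $u$ of $\hat{E}(\dot{\sigma})$; thus $\leftact{g}{f_*(u)}=f_*(\leftact{g_{\gen,R}}{u})\in\hat{E}(\sigma)$ once the generic statement is known.

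The core is the local case, and it is the only place where Theorem~\ref{local} is used. Let $R$ be local. A local ring has stable rank one, so $\SL(2,R)=E(2,R)$; hence for $\alpha\in\Delta$, where $\sigma_{\pm\alpha}=R$, we have $t_{\alpha}^{\zeta,\eta}(\xi)\in\ph_\alpha(E(2,R))=\<x_{\pm\alpha}(R)\>\sub E(\sigma)$, whence $\hat{E}(\sigma)=E(\sigma)$. By Theorem~\ref{local}, $S(\sigma)=T(\Phi,R)\,E(\sigma)\,\ovl{W}(\Phi,\sigma)$, and since $E(\sigma)$ normalises itself it remains to see that $T(\Phi,R)$ and $\ovl{W}(\Phi,\sigma)$ normalise $E(\sigma)$. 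A torus element sends $x_\gamma(\xi)$ to $x_\gamma(\chi_\gamma(t)\xi)$ with $\chi_\gamma(t)\in R^*$, and an element of $\ovl{W}(\Phi,\sigma)$ sends $x_\gamma(\xi)$ to $x_{\gamma'}(\pm\xi)$ with $\sigma_{\gamma'}=\sigma_\gamma$ by the definition of $W(\Phi,\sigma)$; since each $\sigma_\gamma$ is an ideal, both map the generating set of $E(\sigma)$ onto itself. Hence $E(\sigma)=\hat{E}(\sigma)$ is normal in $S(\sigma)$ for local $R$.

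For the case of $g_{\gen,R}$ over $R[G]$ I would localise and patch. By Lemma~\ref{LocalisationForS}, $F_\M(g_{\gen,R})\in S(F_\M(\dot{\sigma}))$ for every maximal ideal $\M$, so by the local case the image of $\leftact{g_{\gen,R}}{u}$ in $G(\Phi,(R[G])_\M)$ lies in $E(F_\M(\dot{\sigma}))$; as $G(\Phi,-)$ is finitely presented, this already holds over $(R[G])_s$ for some $s\notin\M$, where $\leftact{g_{\gen,R}}{u}$ becomes a product of root elements $x_\delta(\mu/s^{k})$ with $\mu\in\dot{\sigma}_\delta$. Since $\Spec R[G]$ is quasi-compact, finitely many such $D(s)$ cover it, say for $s_1,\dots,s_n$ with $(s_1,\dots,s_n)=R[G]$. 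Recombining these local presentations into a global membership in $\hat{E}(\dot{\sigma})$ — showing $\leftact{g_{\gen,R}}{x_\gamma(s_i^{N}\mu)}\in\hat{E}(\dot{\sigma})$ for suitable $N$, then a binomial argument on the partition of unity, and likewise for the transvection generators — is the content of the denominator estimates of Lemma~\ref{denominators}; the compatibility of the elements $t_{\alpha}^{\zeta,\eta}(\xi)$ with the localisation homomorphisms that makes this work (and which fails for the alternative generators discussed in the remarks) rests on Lemma~\ref{ConjugeteTransvection}. Theorem~\ref{Jacobson} then disposes of the congruence-subgroup contribution of radical level that this procedure leaves behind: such a contribution lies in $T(\Phi,R[G],I)\,E(\dot{\sigma}\cap I)$ for an ideal $I$ contained in the Jacobson radical of $R[G]$, and both factors normalise $\hat{E}(\dot{\sigma})$ — the elementary factor trivially, and a torus element because, in addition to $\leftact{t}{x_\gamma(\xi)}=x_\gamma(\chi_\gamma(t)\xi)$, a direct calculation using $\leftact{t}{x_{\pm\alpha}(\eta)}=x_{\pm\alpha}(\chi_\alpha(t)^{\pm1}\eta)$ gives $\leftact{t}{t_{\alpha}^{\zeta,\eta}(\xi)}=t_{\alpha}^{\zeta,\chi_\alpha(t)^{-1}\eta}(\chi_\alpha(t)\xi)$ for $\alpha\in\Delta$. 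Specialising along $f$ then finishes the proof.

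The step I expect to be the main obstacle is this last one: the denominator bookkeeping of Lemma~\ref{denominators} that turns the elementary presentations valid over the rings $(R[G])_{s_i}$ into a genuine membership in $\hat{E}(\dot{\sigma})$ over $R[G]$. The delicate point is handling the transvection generators $t_{\alpha}^{\zeta,\eta}(\xi)$ under the localisation maps and their partial sections — their good interaction with localisation (Lemma~\ref{ConjugeteTransvection}) is exactly why this particular generating set of $\hat{E}(\sigma)$ is used — together with keeping the residual radical-level congruence term under control via Theorem~\ref{Jacobson}. Condition~$(*)$ is used throughout, both via Theorems~\ref{local} and~\ref{Jacobson} and inside the denominator estimates.
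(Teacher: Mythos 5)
Your overall strategy is the right one in spirit (Taddei-style localisation, with the local case settled by Theorem~\ref{local} together with the observation that over a local ring $\hat{E}(\sigma)=E(\sigma)$ and that $T(\Phi,R)$ and $\ovl{W}(\Phi,\sigma)$ permute the elementary generators), and the generic-element reduction at the start is a harmless no-op: proving the statement for $g_{\gen,R}$ over $R[G]$ is just the same statement again, and the paper reserves the generic element for Corollary~\ref{StandComm}, not for Theorem~\ref{normal}. The genuine gap is the local-to-global step, which is the heart of the proof and which you leave to ``denominator bookkeeping''. First, Lemma~\ref{denominators} cannot be applied as you have set things up: its hypothesis is membership of an element of $G(\Phi,R_s[t],(t))$ in the \emph{relative polynomial} group $\hat{E}(F_s(\sigma[t]),(t))$. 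So one must conjugate $x_\gamma(\xi t)$, resp.\ $t_\alpha^{\zeta,\eta}(\xi t)$, not $x_\gamma(\xi)$, and one needs the local statement in the corresponding polynomial form, namely that for local $R$ the group $S(\sigma)$ normalises $\hat{E}(\sigma[t],(t))$ inside $G(\Phi,R[t])$ (the paper's Lemma~\ref{localnorm}); your local case only yields membership of the conjugate in $E(F_{\M}(\dot{\sigma}))$ over the localised ring, which is not the input Lemma~\ref{denominators} needs. Second, even once Lemma~\ref{denominators} applies, it only produces $g_1(t)\in\hat{E}(\sigma[t],(t))$ with $F_s(g_1(t))=F_s(g(s^Nt))$; converting this into an actual membership $g(s^{N'}t)\in\hat{E}(\sigma[t],(t))$ is a separate step (the paper's Lemma~\ref{stothepowern}, resting on the fact that two elements of $G(\Phi,R[t],(t))$ with equal images over $R_s[t]$ become equal after the dilation $t\mapsto s^{n}t$ for $n\gg 0$), and you never supply it.

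Your substitute for that step --- asserting that the procedure ``leaves behind'' a contribution lying in $T(\Phi,R[G],I)\,E(\dot{\sigma}\cap I)$ for an ideal $I$ inside the Jacobson radical of $R[G]$, to be disposed of by Theorem~\ref{Jacobson} --- is unfounded: there is no reason the discrepancy between $\leftact{g}{u}$ and a lift of its localised elementary expression should be of Jacobson-radical level, and Theorem~\ref{Jacobson} concerns elements of $S(\sigma)$ congruent to the identity modulo the radical, not corrections of failed liftings; in the paper it enters only through the proof of Theorem~\ref{local}. Once the polynomial variable is in place, your partition-of-unity idea is exactly the paper's closing argument: $I=\{\theta\in R\colon g(\theta t)\in\hat{E}(\sigma[t],(t))\}$ is an ideal because $g(t_1+t_2)=g(t_1)g(t_2)$ and $t\mapsto rt$ preserves $\hat{E}(\sigma[t],(t))$, and Lemmas~\ref{localnorm}, \ref{denominators} and \ref{stothepowern} show that $I$ is contained in no maximal ideal. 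As written, your proposal names the right ingredients but does not carry out (and in part misidentifies) the mechanism that makes the local information global, so the proof is incomplete at its decisive point.
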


\medskip

By $\Spec(R)$ we denote the set of prime ideals of the ring $R$. Consider the following subgroup in $S(\sigma)$
\begin{align*} %$$
	G(\sigma)
	%\\
	\!=\!\big\{g\!\in\! G(\Phi,R)\,:\, \forall  \fr{P}\!\in\!\Spec\,(R)\;\; F_{\fr{P}}(g)\!\in\! T(\Phi,R_{\fr{P}})E(F_{\fr{P}}(\sigma))\!\le\! G(\Phi,R_{\fr{P}})\big\}\tp
\end{align*} %$$
It is clear that $\hat{E}(\sigma)\le G(\sigma)\le S(\sigma)$; the second inclusion here follows from Lemma~\ref{LocalisationForS}.

\begin{rem*} It can be shown that the group $G(\sigma)$ coincides with the net subgroup defined in~\cite{VavPlotII}. However, the definition above is more convenient for our purposes.
\end{rem*}

Let ${\fr{P}}\in\Spec(R)$. Set $\Delta'_{\fr{P}}=\{\alpha\in\Phi\colon \sigma_\alpha\not\sub \fr{P}\}$. It is easy to see that this is a closed subset of roots (i.e., if $\alpha$,$\beta\in\Delta'_{\fr{P}}$ and $\alpha+\beta\in\Phi$, then $\alpha+\beta\in\Delta'_{\fr{P}}$). Then $\Delta'_{\fr{P}}\cap -\Delta'_{\fr{P}}$ is a subsystem and it is invariant under the action of the group $W(\Phi,F_{\fr{P}}(\sigma))$. Therefore, $W(\Delta'_{\fr{P}}\cap -\Delta'_{\fr{P}})$ is a normal subgroup of $W(\Phi,F_{\fr{P}}(\sigma))$.

A prime ideal ${\fr{P}}\in\Spec(R)$ is called {\it defining}, if it is minimal by inclusion in the set  $\{\tilde{\fr{P}}\in\Spec(R)\colon \Delta'_{\tilde{\fr{P}}}=\Delta'_{\fr{P}}\}$. Let $\Theta\sub\Spec(R)$ be the set of all defining prime ideals.

\begin{thm}\label{finiteindex} In the previous notation $G(\sigma)$ is a normal subgroup of $S(\sigma)$ and there exists an injective homomorphism
	$$
	S(\sigma)/G(\sigma)\rightarrowtail \prod_{\fr{P}\in\Theta} W(\Phi,F_{\fr{P}}(\sigma))/W(\Delta'_{\fr{P}}\cap -\Delta'_{\fr{P}})\tp
	$$
\end{thm}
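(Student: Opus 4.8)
The plan is to establish the two assertions of Theorem~\ref{finiteindex} --- normality of $G(\sigma)$ in $S(\sigma)$ and existence of the claimed injection --- simultaneously, by constructing the homomorphism directly and identifying its kernel with $G(\sigma)$. First I would analyze the structure of $S(F_{\fr P}(\sigma))$ over the local ring $R_{\fr P}$ using Theorem~\ref{local}: we have $S(F_{\fr P}(\sigma)) = T(\Phi,R_{\fr P})E(F_{\fr P}(\sigma))\ovl W(\Phi,F_{\fr P}(\sigma))$, so the quotient $S(F_{\fr P}(\sigma))\big/\,T(\Phi,R_{\fr P})E(F_{\fr P}(\sigma))$ is a quotient of $\ovl W(\Phi,F_{\fr P}(\sigma))$, hence of $W(\Phi,F_{\fr P}(\sigma))$. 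The key point to check is that for an element $g$ landing in $T(\Phi,R_{\fr P})E(F_{\fr P}(\sigma))$ times a Weyl-group representative $w$, the root elements $x_\alpha(\xi)$ with $\alpha\in\Delta'_{\fr P}\cap -\Delta'_{\fr P}$ belong to $T(\Phi,R_{\fr P})E(F_{\fr P}(\sigma))$ (here $\sigma_\alpha R_{\fr P}=R_{\fr P}$ for such $\alpha$), so the class of $w$ is really only well-defined modulo $W(\Delta'_{\fr P}\cap -\Delta'_{\fr P})$; this gives a well-defined map $S(F_{\fr P}(\sigma))\to W(\Phi,F_{\fr P}(\sigma))/W(\Delta'_{\fr P}\cap -\Delta'_{\fr P})$, and the hypothesis that $E(2,R)$-type obstructions are harmless is exactly why $T(\Phi,R_{\fr P})E(F_{\fr P}(\sigma))$ is normal in $S(F_{\fr P}(\sigma))$ over the local ring.

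Next I would assemble these local maps. For $g\in S(\sigma)$ and each prime $\fr P$, Lemma~\ref{LocalisationForS} gives $F_{\fr P}(g)\in S(F_{\fr P}(\sigma))$, so composing with the local homomorphism above yields an element $\psi_{\fr P}(g)\in W(\Phi,F_{\fr P}(\sigma))/W(\Delta'_{\fr P}\cap -\Delta'_{\fr P})$. The product map $\psi=(\psi_{\fr P})_{\fr P}$ over all primes is a homomorphism $S(\sigma)\to\prod_{\fr P\in\Spec R} W(\Phi,F_{\fr P}(\sigma))/W(\Delta'_{\fr P}\cap -\Delta'_{\fr P})$ whose kernel is, by the very definition of $G(\sigma)$, precisely $G(\sigma)$; this proves at once that $G(\sigma)$ is normal and that $\psi$ descends to an injection on the quotient. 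It then remains to cut the index set down from all of $\Spec R$ to the set $\Theta$ of defining primes. For this I would show that if $\tilde{\fr P}\subseteq\fr P$ with $\Delta'_{\tilde{\fr P}}=\Delta'_{\fr P}$, then $W(\Phi,F_{\fr P}(\sigma))\le W(\Phi,F_{\tilde{\fr P}}(\sigma))$ and the map $\psi_{\fr P}$ factors through $\psi_{\tilde{\fr P}}$ followed by reduction modulo the larger Weyl subgroup --- intuitively, localising further at $\fr P$ only loses information, and the associated Weyl component is governed by the closed set $\Delta'$, which is constant on each fiber of $\fr P\mapsto\Delta'_{\fr P}$. Hence $\psi_{\fr P}(g)$ is determined by $\psi_{\tilde{\fr P}}(g)$, and picking one minimal (defining) prime in each fiber suffices to detect membership in $G(\sigma)$.

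The main obstacle I anticipate is the careful bookkeeping in the local analysis: verifying that the assignment $g\mapsto w\bmod W(\Delta'_{\fr P}\cap -\Delta'_{\fr P})$ is genuinely a group homomorphism and not merely a map of sets. This requires knowing how $T(\Phi,R_{\fr P})E(F_{\fr P}(\sigma))$ interacts with Weyl representatives $n_w$ --- specifically that $\leftact{n_w}{(T(\Phi,R_{\fr P})E(F_{\fr P}(\sigma)))} = T(\Phi,R_{\fr P})E(F_{\fr P}(w^{-1}\sigma))$ and that this equals $T(\Phi,R_{\fr P})E(F_{\fr P}(\sigma))$ when $w\in W(\Phi,F_{\fr P}(\sigma))$ --- together with the normality statement that conjugating a generator $x_\alpha(\xi)$ with $\xi\in(F_{\fr P}\sigma)_\alpha$ by an element of $S(F_{\fr P}(\sigma))$ stays inside $T(\Phi,R_{\fr P})E(F_{\fr P}(\sigma))$. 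The latter is essentially a local version of the normality content that also underlies Theorem~\ref{normal}, and condition~$(*)$ should be what makes the root-subgroup conjugation computations go through. A secondary, more technical point will be proving that $\Delta'_{\fr P}\cap-\Delta'_{\fr P}$ is $W(\Phi,F_{\fr P}(\sigma))$-invariant with the stated normality of Weyl subgroups --- but this is a direct consequence of Lemma~\ref{EqualIdeals} and the definition of $W(\Phi,\sigma)$, so I do not expect difficulty there.
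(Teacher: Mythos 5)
Your plan follows the same route as the paper: decompose $S(F_{\fr{P}}(\sigma))=T(\Phi,R_{\fr{P}})E(F_{\fr{P}}(\sigma))\ovl{W}(\Phi,F_{\fr{P}}(\sigma))$ via Theorem~\ref{local}, map each localisation to $W(\Phi,F_{\fr{P}}(\sigma))/W(\Delta'_{\fr{P}}\cap-\Delta'_{\fr{P}})$, assemble these into one homomorphism $\psi$, identify $\Ker\psi$ with $G(\sigma)$, and discard all primes except the defining ones. The genuine gap is that you only verify the easy half of the key local fact. Both the well-definedness of your map $\psi_{\fr{P}}$ and your claim that the kernel over all primes is ``by the very definition'' equal to $G(\sigma)$ rest on the equality
$$
\big(T(\Phi,R_{\fr{P}})E(F_{\fr{P}}(\sigma))\big)\cap \ovl{W}(\Phi,F_{\fr{P}}(\sigma))=\text{preimage of } W(\Delta'_{\fr{P}}\cap-\Delta'_{\fr{P}})\ \text{in}\ \ovl{W}(\Phi),
$$
which is the paper's Lemma~\ref{TEcapW}. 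What you check (that root elements with $\alpha\in\Delta'_{\fr{P}}\cap-\Delta'_{\fr{P}}$, hence the corresponding Weyl representatives, lie in $T(\Phi,R_{\fr{P}})E(F_{\fr{P}}(\sigma))$) is only the inclusion ``$\supseteq$''; it shows the class of $w$ cannot be defined in anything finer, but not that it is defined at all. The nontrivial inclusion ``$\subseteq$'' --- a Weyl element lying in $T(\Phi,R_{\fr{P}})E(F_{\fr{P}}(\sigma))$ must already project into $W(\Delta'_{\fr{P}}\cap-\Delta'_{\fr{P}})$ --- is proved in the paper by reducing modulo $\fr{P}$, so that the element lands in $GG(\Delta'_{\fr{P}},R/\fr{P})\cap\ovl{W}(\Phi)$, and using the structure of $GG(\Delta')$ over a field (Lemmas~\ref{ConnectedComponentOverField}, \ref{DecomposeForField}). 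Moreover your reduction from $\Spec(R)$ to $\Theta$ silently uses this same inclusion again, at the defining prime: from $F_{\fr{P}_1}(g)\in T(\Phi,R_{\fr{P}_1})E(F_{\fr{P}_1}(\sigma))$ you must conclude that the Weyl component of $F_{\fr{P}}(g)$ lies in $W(\Delta'_{\fr{P}}\cap-\Delta'_{\fr{P}})$, which is exactly Lemma~\ref{TEcapW} applied after the further localisation $F_{\fr{P}_1}=F_{F_{\fr{P}}(\fr{P}_1)}\circ F_{\fr{P}}$. By contrast, the point you single out as the main obstacle (homomorphism-ness) is cheap: $\ovl{W}(\Phi,F_{\fr{P}}(\sigma))$ preserves the net, hence normalises $T(\Phi,R_{\fr{P}})E(F_{\fr{P}}(\sigma))$, so this subgroup is normal in $S(F_{\fr{P}}(\sigma))$ and the quotient map is automatically a homomorphism.

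Two smaller points. First, the existence, for every $\fr{P}\in\Spec(R)$, of a defining prime $\fr{P}_1\sub\fr{P}$ with $\Delta'_{\fr{P}_1}=\Delta'_{\fr{P}}$ needs an argument (the paper takes, via Zorn's lemma, a prime contained in $\fr{P}$, containing all $\sigma_\alpha$ with $\alpha\in\Phi\sm\Delta'_{\fr{P}}$, and minimal with these properties); ``picking one minimal prime in each fiber'' is not enough, since you need one below the given $\fr{P}$. Second, your compatibility statement is worded backwards: since $W(\Phi,F_{\fr{P}}(\sigma))\le W(\Phi,F_{\fr{P}_1}(\sigma))$ and the subgroup $W(\Delta'\cap-\Delta')$ is the same for both, it is $\psi_{\fr{P}_1}$ that refines $\psi_{\fr{P}}$ (the composite of $\psi_{\fr{P}}$ with the injection of quotients equals $\psi_{\fr{P}_1}$), not a ``reduction modulo a larger Weyl subgroup''; your subsequent sentence (``$\psi_{\fr{P}}(g)$ is determined by $\psi_{\fr{P}_1}(g)$'') is the correct statement and is what the argument needs. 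With Lemma~\ref{TEcapW} and the Zorn step supplied, your outline becomes the paper's proof, merely organised with the auxiliary product over all of $\Spec(R)$ before cutting down to $\Theta$.
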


\begin{rems} \phan %\linebreak
	%%%\vspace*{-3ex}
	\begin{enumerate}
		\item It is easy to see that if the ring $R$ is Noetherian, then the set $\Theta$ is finite.
		
		\item It can be shown that the group $S(\sigma)$ coincides with the normaliser of the group $G(\sigma)$.
	\end{enumerate}
\end{rems}

\begin{thm}\label{nilpotent} In the previous notation, if the subsystem $\Delta$ has no irreducible components of type $A_1$, and the ring $R$ has finite Bass--Serre dimension, then the quotient $G(\sigma)/\hat{E}(\sigma)$ is nilpotent-by-abelian.
\end{thm}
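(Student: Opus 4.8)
The plan is to adapt the localisation-and-patching machinery, combined with the nilpotency argument of Hazrat--Vavilov~\cite{HazVav}, to the relative situation. Since $\Delta$ has no components of type $A_1$, we have $\hat{E}(\sigma)=E(\sigma)$ by Lemma~\ref{notA1}, so the claim is that $G(\sigma)/E(\sigma)$ is nilpotent-by-abelian. First I would reduce to showing that a suitable commutator subgroup of $G(\sigma)/E(\sigma)$ is nilpotent of bounded class; the ``by-abelian'' part should come from the map to $\prod_{\fr P\in\Theta} W(\Phi,F_{\fr P}(\sigma))/W(\Delta'_{\fr P}\cap-\Delta'_{\fr P})$ of Theorem~\ref{finiteindex} being irrelevant here (that quotient is for $S(\sigma)/G(\sigma)$), so more precisely one identifies an abelian quotient of $G(\sigma)/E(\sigma)$ whose kernel is the ``$K_1$-type'' part; I expect the abelian quotient to record the ``diagonal part modulo elementary'' coming from the torus, i.e.\ something built from $T(\Phi,R_{\fr P})$-components, and the kernel to be the part that becomes trivial after every localisation $F_{\fr P}$ for $\fr P$ of small Bass--Serre dimension.

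The core step is a relative local-global principle: using the generic element $g_{\gen,R}\in G(R[G])$, Lemma~\ref{Sgeneric}, and the net $\dot\sigma$, one shows that membership in $E(\sigma)$ is ``local'', i.e.\ that for $g\in G(\sigma)$ the obstruction to $g\in E(\sigma)$ is detected by the residues at prime ideals, and that this obstruction behaves well under the dimension filtration of $\Spec(R)$. Concretely I would run the Bass--Serre induction on $\dim_{BS}(R)=d$: for $d=0$ the ring is semilocal (a finite product of local rings) and Theorem~\ref{local} gives $S(\sigma)=T(\Phi,R)E(\sigma)\ovl W(\Phi,\sigma)$, whence $G(\sigma)=T(\Phi,R)E(\sigma)$ and $G(\sigma)/E(\sigma)$ is a quotient of a subgroup of the torus, hence abelian. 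For the inductive step, I would cover $\Spec(R)$ by principal opens $D(s_i)$, use the already-established nilpotency over each $R_{s_i}$ together with the standard ``$d$-fold commutator kills denominators'' lemma (an analogue of Lemma~\ref{denominators}) to push a $d$-fold commutator of $G(\sigma)$ into $E(\sigma)$, and conglomerate; this produces a nilpotent normal subgroup $N/E(\sigma)$ of bounded class with abelian quotient. Theorem~\ref{normal} (normality of $E(\sigma)$ in $S(\sigma)\supseteq G(\sigma)$) is what makes $G(\sigma)/E(\sigma)$ a group at all and makes these commutator manipulations legitimate.

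The technical heart, and the step I expect to be the main obstacle, is the ``denominators'' estimate in the relative setting: one must show that if $g\in G(\sigma)$ maps into $E(F_s(\sigma))\le G(\Phi,R_s)$, then for a suitable power $s^N$ the conjugate $\leftact{z_\alpha(s^N t)}{g}\cdot g^{-1}$ type expressions already lie in $E(\sigma)$ over $R$ (not just over a further localisation), with $N$ independent of $g$ --- and crucially that the elementary generators one produces have their parameters in the correct ideals $\sigma_\alpha$, not merely in $R$. This is exactly where the remark after the list warns that Vavilov's alternative generating set fails, and where condition~$(*)$ enters: $(*)$ guarantees that for every $\gamma\in\Phi\sm\Delta$ there is $\alpha\in\Delta$ with $\alpha+\gamma\in\Phi$ and $c_{\alpha,\gamma}\in R^*$, which lets one ``move'' a root element $x_\gamma(\xi)$, $\xi\in\sigma_\gamma$, in and out of $E(\sigma)$ by commuting with $X_\alpha$ (where $\sigma_\alpha=R$), keeping track of ideals. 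I would isolate this as a lemma: for $g\in G(\sigma)$ and $s$ non-nilpotent with $F_s(g)\in T(\Phi,R_s)E(F_s(\sigma))$, there is $N$ with $E(\Phi,R,s^N R)\cap\text{(orbit of }g\text{)}\subseteq E(\sigma)\cdot(\text{torus part})$; granting it, the nilpotent-by-abelian conclusion follows by the standard patching argument as in~\cite{HazVav} and~\cite{GvozLevi}.
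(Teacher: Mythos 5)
Your outline follows the paper's strategy: Bass--Serre induction in the style of Bak and Hazrat--Vavilov, with the semilocal base case handled via Theorems~\ref{Jacobson} and~\ref{local} (giving $G(\sigma)=T(\Phi,R)E(\sigma)$, whence the abelian layer), and with a relative ``denominators'' commutator estimate, resting on condition~$(*)$ and the generic element, as the technical heart; this matches Lemmas~\ref{GForSemiLocal}, \ref{Denominators2} and~\ref{InductionComm}, and your identification of where $(*)$ enters (keeping the parameters of the produced root elements inside the ideals $\sigma_\alpha$) is on target, as is the reduction $\hat{E}(\sigma)=E(\sigma)$ via Lemma~\ref{notA1}.

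However, the inductive step as you describe it would not go through literally. You propose to cover $\Spec(R)$ by principal opens $D(s_i)$ and ``use the already-established nilpotency over each $R_{s_i}$'', i.e.\ to induct on the dimension of principal localisations. But $\delta(R_s)$ need not be smaller than $\delta(R)$: the dimension-drop statement (Lemma~\ref{InductionDim}) holds only for the completion-type rings $\tilde{R}_s=\varinjlim_i\varprojlim_n R_i/(s^n)$, and only for $s$ avoiding a suitable finite set of maximal ideals; moreover, nilpotency of the quotients over the various localisations does not directly ``conglomerate'' into nilpotency over $R$. The paper instead introduces the filtration $G_d(\sigma)=\bigcap\Ker\big(G(\sigma)\to G(f(\sigma))/\hat{E}(f(\sigma))\big)$, the intersection taken over all homomorphisms $\map{f}{R}{A}$ with $\delta(A)\le d$, and proves $[G(\sigma),G(\sigma)]\le G_0(\sigma)$ (Lemma~\ref{GbyGzero}, the abelian part) and $[G_d(\sigma),G_0(\sigma)]\le G_{d+1}(\sigma)$ (Lemma~\ref{GdbyGdplusone}); the latter is where, for given $g\in G_d(\sigma)$ and $h\in G_0(\sigma)$, one chooses a single $s$ with $F_s(h)\in E(F_s(\sigma))$ and $\tilde{F}_s(g)\in E(\tilde{F}_s(\sigma))$, splits $g=g_1g_2$ with $g_1\in E(\sigma)$, $g_2\in S(\sigma,(s^N))$, and applies Lemma~\ref{Denominators2} together with Theorem~\ref{normal} (and, behind Lemma~\ref{Denominators2}, Corollary~\ref{StandComm} and Proposition~\ref{withoutA1}). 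Since $G_d(\sigma)=\hat{E}(\sigma)$ once $\delta(R)=d$, this yields the nilpotent-by-abelian conclusion. So your plan is repairable, but the rings $\tilde{R}_s$ and the $G_d$-filtration by homomorphisms into rings of bounded dimension are essential ingredients, not optional packaging of a cover of $\Spec(R)$.
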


\medskip

Note that Theorems~\ref{Jacobson} and~\ref{local} for general linear group follow from~\cite{BV78}. For other classical group assuming 2 is invertible they follows from~\cite{VavMIAN}.

\section{The proof of Theorem \ref{Jacobson}}

By $J$ we always denote the Jacobson radical of the ring $R$. In this section we fix some system of positive roots $\Phi^+$ (then set $\Phi^-=-\Phi^+$) and some topological sort $(\,\le\,)$ of the corresponding Bruhat order on the root system $\Phi$. 

\begin{lem} \label{Jorder} Let $\sigma$ be a net of ideals. Let $g\in T(\Phi,R,J)E(\sigma\cap J)$. Then the element $g$ can be written as
	$$
	g=h\prod_{\gamma\in \Phi}x_{\gamma}(\xi_{\gamma}),
	$$ 
	where $\xi_\gamma \in \sigma_\gamma\cap J,$ $h\in T(\Phi,R,J),$ and the order of factors in the product coincides with the order $(\,\le\,)$.
\end{lem}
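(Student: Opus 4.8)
The plan is to start from an arbitrary element $g\in T(\Phi,R,J)E(\sigma\cap J)$, write it as a product of an element $h_0\in T(\Phi,R,J)$ with a finite product of elementary root elements $x_{\gamma}(\xi)$, $\xi\in\sigma_\gamma\cap J$, in whatever order they happen to occur, and then shuffle the factors into the prescribed order $(\,\le\,)$ by repeatedly swapping adjacent ones. The key point is that since $J$ is the Jacobson radical, every element $1+\xi$ with $\xi\in J$ is a unit, so the Chevalley commutator formula will let us push torus elements and out-of-order root elements past each other while only introducing new root elements whose entries again lie in the appropriate $\sigma_\gamma\cap J$. I would set this up as an induction on the number of ``inversions'' of the factorization, i.e.\ the number of adjacent pairs $x_{\beta}(\cdot)x_{\gamma}(\cdot)$ with $\beta\not\le\gamma$.

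The heart of the argument is the swapping step. Given an adjacent out-of-order pair $x_{\beta}(\zeta)x_{\gamma}(\eta)$, I would use the Chevalley commutator formula
$$
[x_{\gamma}(\eta),x_{\beta}(\zeta)]=\prod_{i\beta+j\gamma\in\Phi} x_{i\beta+j\gamma}\big(N_{\beta\gamma ij}\,\zeta^{i}\eta^{j}\big)
$$
to rewrite the pair as $x_{\gamma}(\eta)$ times (a product of root elements for roots of the form $i\beta+j\gamma$, $i,j>0$) times $x_{\beta}(\zeta)$; one has to handle separately the degenerate case $\beta=-\gamma$, where the commutator picks up a torus factor $h_\beta(1+\text{something in }J)$ and the relation $x_{\beta}(\zeta)x_{-\beta}(\eta)$ is resolved via the $\SL_2$ computation over $R_J$ — here is exactly where invertibility of $1+\zeta\eta$ (hence the Jacobson radical hypothesis) is used, so that the product stays inside $T(\Phi,R,J)E(\sigma\cap J)$ rather than escaping to a larger group. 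In all cases the new root entries are sums of products of the old entries, so they lie in $\sigma_{i\beta+j\gamma}\cap J$ by the net axioms (condition (1) on $\sigma$, and $J$ being an ideal). One then checks that the chosen topological sort of the Bruhat order guarantees the new roots $i\beta+j\gamma$ are ``between'' $\beta$ and $\gamma$, so the total inversion count strictly decreases; this is the standard ``straightening'' bookkeeping and I would phrase it carefully but not belabor it.

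Finally, once all positive-over-negative and generally out-of-order pairs are resolved, the torus factors that were generated along the way (all of the form $h_\alpha(\eps)$ with $\eps\in 1+J$, hence in $T(\Phi,R,J)$) can be collected and moved to the far left past the remaining root elements using $h\,x_{\gamma}(\xi)\,h^{-1}=x_{\gamma}(\chi(h)\xi)$, which again only rescales entries within $\sigma_\gamma\cap J$; absorbing them into a single $h\in T(\Phi,R,J)$ gives the desired normal form $g=h\prod_{\gamma\in\Phi}x_\gamma(\xi_\gamma)$ with the factors in the order $(\,\le\,)$. The main obstacle I anticipate is the case of opposite roots $\beta=-\gamma$: there one genuinely leaves the ``unipotent'' world and must argue inside $\ph_{\gamma}(\SL(2,R))$, using that the $2\times 2$ matrix involved is, modulo $J$, upper-triangular unipotent and hence — because $R/J$ is semilocal-ish in the relevant sense and $1+J\subseteq R^*$ — can be factored as lower-unipotent $\cdot$ diagonal $\cdot$ upper-unipotent with all factors congruent to $1$ mod $J$; making sure the resulting entries still lie in the correct net ideals (using Lemma \ref{EqualIdeals} to identify $\sigma_\gamma$ with $\sigma_{-\gamma}$ up to $\Delta$-translation where needed) is the delicate bookkeeping point.
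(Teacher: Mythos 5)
Your proposal is essentially the paper's own proof: write $g$ as a word in the generators $x_\gamma(\xi)$, $\xi\in\sigma_\gamma\cap J$, and elements of $T(\Phi,R,J)$, then straighten it using torus conjugation, the Chevalley commutator formula for non-opposite roots, and the $\SL_2$ relation for opposite roots, where $1+\xi_1\xi_2\in R^*$ because the entries lie in the Jacobson radical, with the net axioms keeping all new entries in $\sigma_\gamma\cap J$. The paper is equally brief about the termination bookkeeping (it simply asserts that the five rewriting moves suffice), so your sketch matches it in both substance and level of detail.
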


\begin{proof}
	The following elements will be called the generators of the group $T(\Phi,R,J) E(\sigma\cap J)$: the elements $x_{\gamma}(\xi)$, where $\xi\in \sigma_\gamma\cap J$, and the elements of the group $T(\Phi,R,J)$. The element $g$ can be expressed by a word in these generators. Since $\sigma$ is a net of ideals, it follows that after applying the following transformations of the word (which do not change the element expressed by this word) we again obtain a word in these generators. 
	\begin{enumerate}
		\item For $h\in T(\Phi,R,J)$ and $\xi\in(\sigma_\gamma\cap J)$ replace $hx_\gamma(\xi)$ by $x_{\gamma}(\xi')h$ or vice versa, where $\xi'$ is $\xi$ multiplied by suitable invertible element of the ring.
		
		\item Replace two adjacented elements of $T(\Phi,R,J)$ by their product.
		
		\item Replace $x_{\gamma}(\xi_1)x_\gamma(\xi_2)$ by $x_{\gamma}(\xi_1+\xi_2)$.
		
		\item For $\gamma_1\ne \pm\gamma_2$ swap $x_{\gamma_1}(\xi_1)$ and $x_{\gamma_2}(\xi_2)$ adding factors from the Chevalley commutator formula.
		
		\item Replace $x_{\gamma}(\xi_1)x_{-\gamma}(\xi_2)$ by $x_{-\gamma}(\xi_2(1+\xi_1\xi_2)^{-1})x_{\gamma}(\xi_1(1+\xi_1\xi_2))h_{\gamma}(1+\xi_1\xi_2)$. Here the element $(1+\xi_1\xi_2)$ is invertible because $\xi_1\in J$. The corresponding relation in the group $G(\Phi,R)$ follows from the relation in $\SL(2,R)$:
		\begin{align*} %$$
			&\begin{pmatrix}
				1 & \xi_1\\ 0 & 1
			\end{pmatrix}
			\begin{pmatrix}
				1 & 0 \\ \xi_2 & 1
			\end{pmatrix}
			\\
			&=\begin{pmatrix}
				1 & 0 \\ \xi_2(1+\xi_1\xi_2)^{-1} & 1
			\end{pmatrix}
			\begin{pmatrix}
				1 & \xi_1(1+\xi_1\xi_2)\\ 0 & 1
			\end{pmatrix}
			\begin{pmatrix}
				(1+\xi_1\xi_2) & 0 \\ 0 & (1+\xi_1\xi_2)^{-1}
			\end{pmatrix}\tp
		\end{align*} %$$
	\end{enumerate}
	
	It is also easy to see that using these five types of transformations, any word can be transformed to the required shape. 
\end{proof}

\begin{rem*} Alternatively, the above lemma can be proved by citing Theorem~2 in~\cite{VavPlotII} in order to separate positive root generators from negative root generators. After that it remans to apply transformations (1)--(4) from the proof above in order to place roots in the required order.
\end{rem*}
\begin{lem}\label{Jnilpotent} Let $\sigma$ be a net of ideals. Let $I\unlhd R$ be such ideal that $I^k=0$ for some $k\in\N$. Then $S(\sigma)\cap G(\Phi,R,I)\le T(\Phi,R,I)E(\sigma\cap I)$.
\end{lem}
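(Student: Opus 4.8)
The plan is to induct on the nilpotency index $k$, reducing the general case to that of a square-zero ideal, which is where the real work lies. The case $I=0$ (hence $I^2=0$) is trivial since then $G(\Phi,R,I)=\{e\}$, and condition~$(*)$ is in force throughout.

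\emph{The square-zero case $I^2=0$.} Let $g\in S(\sigma)\cap G(\Phi,R,I)$. First I would put $g$ in Gauss form: since $I$ is nilpotent, $g$ reduces to $e$ modulo every maximal ideal, and the complement of the big cell $U^-TU^+$ is a closed subscheme not containing $e$; hence $g$ lies in the big cell and $g=u^-hu^+$ with $u^\pm$ in the two opposite unipotent radicals and $h\in T(\Phi,R)$. Reducing modulo $I$, where the product becomes $e$, and invoking uniqueness of the Gauss decomposition forces all root coordinates of $u^\pm$ into $I$ and $h\in T(\Phi,R,I)$. Because $I^2=0$, the Chevalley commutator formula and the identity $h\,x_\gamma(\xi)\,h^{-1}=x_\gamma(\chi_\gamma(h)\xi)$ show that $G(\Phi,R,I)$ is abelian, so $g$ can be rewritten, in any order, as $g=h\prod_{\gamma\in\Phi}x_\gamma(\xi_\gamma)$ with all $\xi_\gamma\in I$.

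Now I would exploit $g\in S(\sigma)$. For every $\alpha\in\Delta$ we have $e_\alpha\in L(\sigma)$, hence $\leftact{g}{e_\alpha}\in L(\sigma)$. Since $x_\gamma(\xi)$ acts on $L(\Phi,R)$ by $\exp(\xi\,\ad_{e_\gamma})$ and $h\in T(\Phi,R,I)$ acts by root characters lying in $1+I$, and since $\xi_\gamma^2$ and all products $\xi_\gamma\xi_{\gamma'}$ vanish, one computes
$$
\leftact{g}{e_\alpha}=\chi_\alpha(h)\,e_\alpha+\xi_{-\alpha}[e_{-\alpha},e_\alpha]+\sum_{\gamma\,:\,\gamma+\alpha\in\Phi}c_{\gamma,\alpha}\,\xi_\gamma\,e_{\gamma+\alpha},
$$
where the middle summand is toric. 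Comparing coordinates against $L(\sigma)=D\oplus\bigoplus_\beta\sigma_\beta e_\beta$ gives $c_{\gamma,\alpha}\,\xi_\gamma\in\sigma_{\gamma+\alpha}$ for every $\gamma\in\Phi$ with $\gamma+\alpha\in\Phi$. To conclude $\xi_\gamma\in\sigma_\gamma$ for an arbitrary root $\gamma$: if $\gamma\in\Delta$ then $\sigma_\gamma=R$ and there is nothing to check; if $\gamma\in\Phi\setminus\Delta$, choose $\alpha\in\Delta$ as in condition~$(*)$, so $\gamma+\alpha\in\Phi$ and $c_{\alpha,\gamma}$ — hence $c_{\gamma,\alpha}=-c_{\alpha,\gamma}$ — is invertible; then $\xi_\gamma\in\sigma_{\gamma+\alpha}$, and $\sigma_{\gamma+\alpha}=\sigma_\gamma$ by Lemma~\ref{EqualIdeals} because $(\gamma+\alpha)-\gamma=\alpha\in\Delta$. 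Thus every $\xi_\gamma\in\sigma_\gamma\cap I$, so $\prod_\gamma x_\gamma(\xi_\gamma)\in E(\sigma\cap I)$ and $g\in T(\Phi,R,I)E(\sigma\cap I)$.

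\emph{The inductive step.} Assume the statement for all ideals of nilpotency index smaller than $k$, in arbitrary rings, and let $I^k=0$ with $k\ge3$. The map $R\to R/I^{k-1}$ carries $L(\sigma)$ onto $L(\rho_{I^{k-1}}(\sigma))$ and intertwines the adjoint actions, so $\rho_{I^{k-1}}(g)\in S(\rho_{I^{k-1}}(\sigma))\cap G(\Phi,R/I^{k-1},I/I^{k-1})$, and $(I/I^{k-1})^{k-1}=0$; by the induction hypothesis $\rho_{I^{k-1}}(g)=\bar h\,\bar\epsilon$ with $\bar h\in T(\Phi,R/I^{k-1},I/I^{k-1})$ and $\bar\epsilon\in E(\rho_{I^{k-1}}(\sigma)\cap(I/I^{k-1}))$. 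Lift $\bar h$ to $h\in T(\Phi,R,I)$ (the torus is split and $1+I\subseteq R^*$) and $\bar\epsilon$ to $\epsilon\in E(\sigma\cap I)$, generator by generator, using $\rho_{I^{k-1}}(\sigma_\alpha\cap I)=\rho_{I^{k-1}}(\sigma_\alpha)\cap(I/I^{k-1})$ (which holds since $\rho_{I^{k-1}}^{-1}(I/I^{k-1})=I$). As $T(\Phi,R,I)$ and $E(\sigma\cap I)$ both lie in $S(\sigma)$, the element $g_1:=(h\epsilon)^{-1}g$ lies in $S(\sigma)\cap G(\Phi,R,I^{k-1})$, and $(I^{k-1})^2=I^{2k-2}=0$; the square-zero case gives $g_1\in T(\Phi,R,I^{k-1})E(\sigma\cap I^{k-1})\le T(\Phi,R,I)E(\sigma\cap I)$. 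Finally $g=h\epsilon g_1$, and moving the torus factors to the left — they normalise $E(\sigma\cap I)$, sending $x_\alpha(\xi)$ to $x_\alpha$ of a unit multiple of $\xi$ — yields $g\in T(\Phi,R,I)E(\sigma\cap I)$.

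\emph{Main obstacle.} The essential content is the square-zero case, and inside it the passage from ``$g$ stabilises $L(\sigma)$'' to ``$\xi_\gamma\in\sigma_\gamma$ for every root $\gamma$''. One must organise the Gauss normal form for $G(\Phi,R,I)$ carefully, with some attention to the possibly non-simply-connected isogeny type (where $L(\Phi,R)$ is not the tangent algebra, although the adjoint action on $L(\Phi,R)$ is still given by the usual formulas), and then set up the coordinate comparison so that condition~$(*)$ together with Lemma~\ref{EqualIdeals} applies uniformly to every $\gamma\notin\Delta$; this is precisely the point at which hypothesis~$(*)$ is indispensable.
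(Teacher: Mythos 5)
Your proposal is correct and takes essentially the same route as the paper: the same square-zero base case (a normal form $g=h\prod_\gamma x_\gamma(\xi_\gamma)$ with all $\xi_\gamma\in I$, which the paper imports from Abe--Suzuki rather than re-deriving via the big cell, followed by reading off the $e_{\alpha+\gamma}$-coefficient of $\leftact{g}{e_\alpha}$ and invoking condition $(*)$ together with Lemma~\ref{EqualIdeals}), and the same inductive step by reducing modulo $I^{k-1}$, lifting the torus and elementary factors, and applying the square-zero case to $I^{k-1}$.
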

\begin{proof}
	The proof is by induction on $k$.
	
	The base of induction is for $k=2$. Let $g\in S(\sigma)\cap G(\Phi,R,I)$. Since $g\in G(\Phi,R,I)$, it can be written as
	$$
	g=h\prod_{\gamma\in \Phi}x_{\gamma}(\xi_{\gamma}),
	$$ 
	where $h\in T(\Phi,R,I)$, $\xi_\gamma\in I$, and the order of factors in the product coincides with the order $(\,\le\,)$. It follows, for example, from~\cite[Proposition 2.3]{AbeSuzuki}. Now let us show that $\xi_{\gamma}\in \sigma_{\gamma}$ for any $\gamma\in \Phi\sm\Delta$. 
	
	By $(*)$ there exists $\alpha\in\Delta$ such that $\alpha+\gamma\in \Phi$ and $c_{\alpha,\gamma}\in R^*$. Since $g\in S(\sigma)$, it follows that $(\leftact{g}{e_\alpha})^{\alpha+\gamma}\in \sigma_{\alpha+\gamma}=\sigma_{\gamma}$ (Lemma \ref{EqualIdeals}). On the other hand, since $I^2=0$, it follows that $(\leftact{g}{e_\alpha})^{\alpha+\gamma}=\eps c_{\alpha,\gamma}\xi_{\gamma}$, where $\eps\in R^*$ depends on $h$. Therefore, $\xi_{\gamma}\in \sigma_{\gamma}$.
	
	Now let us perform the induction step from $k-1$ to $k$. It is clear that $\rho_{I^{k-1}}(S(\sigma))\le S(\rho_{I^{k-1}}(\sigma))\le G(\Phi,R/I^{k-1})$, and that $\rho_{I^{k-1}}(G(\Phi,R,I))\le G(\Phi,R/I^{k-1},\rho_{I^{k-1}}(I))$. Therefore, by induction hypothesis we have $\rho_{I^{k-1}}(S(\sigma)\cap G(\Phi,R,I))\le T(\Phi,R/I^{k-1})E(\rho_{I^{k-1}}(\sigma)\cap \rho_{I^{k-1}}(I))$.
	
	Since $E(\sigma\cap I)$ maps onto $E(\rho_{I^{k-1}}(\sigma)\cap \rho_{I^{k-1}}(I))$ surjectively (because $\rho_{I^{k-1}}(\sigma)\cap \rho_{I^{k-1}}(I)=\rho_{I^{k-1}}(\sigma\cap I)$ ), and (since the ideal $I$ is nilpotent) $T(\Phi,R,I)$ maps surjectively onto $T(\Phi,R/I^{k-1}$, $\rho_{I^{k-1}}(I))$, it follows that
	$$
	S(\sigma)\cap G(\Phi,R,I)\le T(\Phi,R,I)E(\sigma\cap I)G(\Phi,R,I^{k-1})\tp
	$$
	Therefore, using the base of induction and the fact that $(I^{k-1})^2=0$, we obtain
	\begin{align*}
		&S(\sigma)\cap G(\Phi,R,I)\le (T(\Phi,R,I)E(\sigma\cap I)G(\Phi,R,I^{k-1}))\cap S(\sigma)
		\\
		&= T(\Phi,R,I)E(\sigma\cap I)(G(\Phi,R,I^{k-1})\cap S(\sigma))=T(\Phi,R,I)E(\sigma\cap I)\tp
		\qedhere
	\end{align*}
\end{proof}

\begin{lem}\label{JNoeter} Let $\sigma$ be a net of ideals in a Noetherian ring $R$. Let $g\in S(\sigma)\cap G(\Phi,R,J),$ and let 
	$$
	g=h\prod_{\gamma\in \Phi}x_{\gamma}(\xi_{\gamma}),
	$$ 
	where $\xi_\gamma \in J$, $h\in T(\Phi,R,J),$ and the order of factors in the product coincides with the order $(\,\le\,)$. Then $\xi_{\gamma}\in \sigma_{\gamma}$.
\end{lem}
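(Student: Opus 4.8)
The plan is to reduce the statement to the case of a nilpotent ideal, which is settled by Lemma~\ref{Jnilpotent}, by working modulo the powers $J^{k}$ and then passing to the limit by means of the Krull intersection theorem; Noetherianity of $R$ is used only at this very last step.

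First I would fix $k\in\N$ and reduce modulo $J^{k}$. Put $\bar R=R/J^{k}$ and write $\bar g,\bar h,\bar\xi_{\gamma},\bar\sigma$ for the images of $g,h,\xi_{\gamma},\sigma$ under $\rho_{J^{k}}$. Since every maximal ideal of $R$ contains $J\supseteq J^{k}$, the Jacobson radical of $\bar R$ equals $\bar J=J/J^{k}$, and $\bar J^{k}=0$. Exactly as in the proof of Lemma~\ref{Jnilpotent} one has $\bar g\in\rho_{J^{k}}(S(\sigma))\le S(\bar\sigma)$, while clearly $\bar g\in G(\Phi,\bar R,\bar J)$; hence Lemma~\ref{Jnilpotent} yields $\bar g\in T(\Phi,\bar R,\bar J)\,E(\bar\sigma\cap\bar J)$. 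Applying Lemma~\ref{Jorder} over $\bar R$ I get a factorisation $\bar g=h'\prod_{\gamma\in\Phi}x_{\gamma}(\eta_{\gamma})$ with $h'\in T(\Phi,\bar R,\bar J)$, $\eta_{\gamma}\in\bar\sigma_{\gamma}\cap\bar J$, and factors ordered by $(\,\le\,)$. On the other hand, reducing the factorisation of $g$ given in the hypothesis modulo $J^{k}$ produces $\bar g=\bar h\prod_{\gamma\in\Phi}x_{\gamma}(\bar\xi_{\gamma})$, a factorisation of the very same shape ($\bar h\in T(\Phi,\bar R,\bar J)$, $\bar\xi_{\gamma}\in\bar J$, same order). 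Comparing the two and invoking the uniqueness of such a factorisation for elements of the congruence subgroup $G(\Phi,\bar R,\bar J)$, I obtain $\bar\xi_{\gamma}=\eta_{\gamma}\in\bar\sigma_{\gamma}=\rho_{J^{k}}(\sigma_{\gamma})$, that is, $\xi_{\gamma}\in\sigma_{\gamma}+J^{k}$ for every $\gamma\in\Phi$.

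Since $k$ was arbitrary, $\xi_{\gamma}\in\bigcap_{k\in\N}(\sigma_{\gamma}+J^{k})$ for each $\gamma$. I would then pass to the ring $R/\sigma_{\gamma}$, which is Noetherian because $R$ is, and in which the image of $J$ is contained in the Jacobson radical; the Krull intersection theorem gives $\bigcap_{k\in\N}(\sigma_{\gamma}+J^{k})=\sigma_{\gamma}$, whence $\xi_{\gamma}\in\sigma_{\gamma}$, as required.

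The main obstacle I anticipate is the comparison of the two factorisations of $\bar g$: it relies on the uniqueness of a factorisation $t\prod_{\gamma\in\Phi}x_{\gamma}(c_{\gamma})$ of an element of the principal congruence subgroup $G(\Phi,\bar R,\bar J)$, with $t\in T(\Phi,\bar R,\bar J)$, $c_{\gamma}\in\bar J$ and the factors in the prescribed order $(\,\le\,)$, whereas Lemmas~\ref{Jorder} and~\ref{Jnilpotent} assert only its existence (and Lemma~\ref{Jnilpotent} merely membership in a subgroup, not a statement about one particular factorisation). One should record this uniqueness separately; it is standard --- in any topological sort of the Bruhat order all negative roots precede all positive ones, so the factorisation is simply the decomposition of an element of the big cell, which is an open subscheme of the Chevalley--Demazure scheme (the statement is already implicit in \cite{AbeSuzuki}). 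Once it is in place, the rest is a routine reduction combined with Krull's theorem.
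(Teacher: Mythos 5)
Your argument is correct and follows essentially the same route as the paper's proof: reduce modulo $J^k$, combine Lemmas~\ref{Jnilpotent} and~\ref{Jorder} to get a second ordered factorisation with coefficients in $\rho_{J^k}(\sigma)$, match coefficients via uniqueness of the Gauss (big cell) decomposition, and finish with the Krull intersection theorem applied to $R/\sigma_\gamma$. The uniqueness step you flag is exactly what the paper invokes as well (citing \cite{StepUniloc}), so there is no gap.
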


\begin{proof}
	Since $R$ is Noetherian, it follows (from the Krull In\-ter\-sec\-tion Theorem applied to localisations of the ring $R/\sigma_{\gamma}$ in all its maximal ideals) that $\sigma_{\gamma}=\bigcap_{k\in\N}(\sigma_{\gamma}+J^k)$. Therefore, it is enough to show that $\rho_{J^k}(\xi_\gamma)\in\rho_{J^k}(\sigma_{\gamma})$ for any $k\in\N$. 
	
	By Lemma~\ref{Jnilpotent} we have $\rho_{J^k}(g)\in T(\Phi,R/J^k, \rho_{J^k}(J))E(\rho_{J^k}(\sigma)\cap \rho_{J^k}(J))$; and by Lemma~\ref{Jorder} we have
	$$
	\rho_{J^k}(g)=h'\prod_{\gamma\in \Phi}x_{\gamma}(\xi'_{\gamma}),
	$$   
	where $\xi'_\gamma \in \rho_{J^k}(\sigma)\cap \rho_{J^k}(J)$ and $h'\in T(\Phi,R/J^k,\rho_{J^k}(J))$, and the order of factors in the product coincides with the order $(\,\le\,)$. 
	
	For an element of Chevalley group, if such an expression exists (i.e., it the element belong to the main cell of Gauss decomposition, see, for example, \cite{StepUniloc}), then it is unique, i.e. $\rho_{J^k}(\xi_\gamma)=\xi'_{\gamma}\in\rho_{J^k}(\sigma_{\gamma})$, q.e.d.
\end{proof}

\begin{rem*} The two previous lemmas can be deduced from the proof of Theo\-rem~2 in ~\cite{VavPlotII}. However, as already mentioned, in the present paper we use different (in fact equivalent) definitions.
\end{rem*}

Now we prove Theorem~\ref{Jacobson}. Let $g\in S(\sigma)\cap G(\Phi,R,J)$. Since $g\in G(\Phi,R,J)$, it follows that it can be written as
$$
g=h\prod_{\gamma\in \Phi}x_{\gamma}(\xi_{\gamma}),
$$ 
where $\xi_\gamma \in J$, $h\in T(\Phi,R,J)$, and the order of factors in the product coincides with the order $(\,\le\,)$. Let us prove that $\xi_{\gamma}\in \sigma_{\gamma}$ for any $\gamma\in\Phi$.

Let $R_1$ be such a finitely generated subring in $R$ that the element $h$ is defined in $R_1$, and $\xi_{\gamma}\in R_1$ for all $\gamma\in \Phi$. Let $R_2$ be the subring in $R$, obtained from $R_1$ by adding elements that are inverse to the elements from $R_1\cap R^*$. Then the ring $R_2$ is Noetherian because it is a localisation of $R_1$; and its Jacobson radical contains the ideal $R_2\cap J$. Also $R_2$ contains $1\over 2$ and/or~$1\over 3$ if it is required by  $(*)$. Therefore, it follows from Lemma~\ref{JNoeter} that $\xi_{\gamma}\in\sigma_\gamma\cap R_2\sub \sigma_\gamma$.

\section{The proof of Theorem \ref{local}}

Let $\Delta'\le\Phi$ be a closed set of roots (i.e if $\alpha,\beta\in\Delta'$ and $\alpha+\beta\in\Phi$, then $\alpha+\beta\in\Delta'$), and let $\Delta\le \Delta'$. For any ring $R$ consider the net of ideals $\sigma_{\Delta'}$ defined as follows
$$
(\sigma_{\Delta'})_\gamma=\begin{cases}
	R,&\gamma\in\Delta',\\
	(0),&\gamma\notin\Delta'.
\end{cases}
$$ 
Further set $E(\Delta',R)=E(\sigma_{\Delta'})$. Then the Zariski sheafification of the presheaf $T(\Phi,-)E(\Delta',-)$ is a semidirect product of the extended Chevalley group that corresponds to the subsystem $\Delta'\cap(-\Delta')$ and the unipotent subgroup that corresponds to set $\Delta'\sm(-\Delta')$. We denote this group by $GG(\Delta',R)$. 

Here by extended Chevalley group we mean the group
$$
GG(\Delta'\cap(-\Delta'),R)=T(\Phi,R)G(\Delta'\cap(-\Delta'),R).
$$

Note that there exists a closed subtorus $T_0(R)\le T(\Phi,R)$ (possibly trivial), such that
$
T(\Phi,R)=T(\Delta'\cap(-\Delta'),R)\times T_0(R),
$
where the factor $T(\Delta'\cap(-\Delta'),R)$ is a toric subgroup of the subsystem subgroup $G(\Delta'\cap(-\Delta'),R)\le G(\Phi,R)$. This follows from the fact that any surjective homomorphism of lattices splits, and from the duality between tori and lattices. Therefore, we have $GG(\Delta'\cap(-\Delta'),R)=G(\Delta'\cap(-\Delta'),R)\leftthreetimes T_0(R)$. In particular, this  is the group of points of a smooth affine group scheme, and the same is true for the group $GG(\Delta',R)$, because its second factor is isomorphic as a scheme to an affine space. 

We also set $\tilde{G}(\Delta',R)=S(\sigma_{\Delta'})$. It is clear that $GG(\Delta',R)\le \tilde{G}(\Delta',R)$.

For simplicity we will write $\ovl{W}(\Phi,\Delta')=\ovl{W}(\Phi,\sigma_{\Delta'})$.

The two following lemmas basically were proved in the author's paper~\cite{Gvoz2A1} (Lemmas 13 and 14). However, in order to adapt them to our assumptions the following changes must been made to the proofs: in the proof of the first lemma instead of the reference to Lemma~12, which uses stronger assumption on the subsystem~$\Delta$, one must refer to Theorem~\ref{Jacobson} of the present paper. The second Lemma can be deduded from the first one in the same way as in~\cite{Gvoz2A1}.

\begin{lem}\label{ConnectedComponentOverField} Let $R=L$ be an algebraically closed field. Then the subgroups $GG(\Delta',L)$ and $ \tilde{G}(\Delta',L)$ are closed in $G(\Phi,L),$ and the subgroup $GG(\Delta',L)$ is a connected component of identity of the group  $\tilde{G}(\Delta',L)$.
\end{lem}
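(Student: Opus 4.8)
The plan is to work over the algebraically closed field $L$ and exploit the fact that both $GG(\Delta',L)$ and $\tilde{G}(\Delta',L)=S(\sigma_{\Delta'})$ are defined by closed conditions, and then compare tangent spaces. First I would observe that $\tilde{G}(\Delta',L)=\Stab_{G(\Phi,L)}(L(\sigma_{\Delta'}))$ is the stabiliser of a fixed subspace of the Chevalley algebra under the adjoint action; since the adjoint action is a morphism of varieties and the stabiliser of a linear subspace is a closed condition (a system of polynomial equations in the matrix entries of the action), the subgroup $\tilde{G}(\Delta',L)$ is Zariski closed in $G(\Phi,L)$. Similarly $GG(\Delta',L)$, being by its very construction the group of points of a smooth affine group subscheme of $G(\Phi,-)$ (a semidirect product of an extended Chevalley group of the reductive type $\Delta'\cap(-\Delta')$ with a unipotent group corresponding to $\Delta'\sm(-\Delta')$, as noted just above), is a closed connected subgroup of $G(\Phi,L)$; connectedness follows because it is an extension of a connected reductive group by a connected unipotent group (an affine space), hence irreducible as a variety.

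Next, since $GG(\Delta',L)\le \tilde{G}(\Delta',L)$ and $GG(\Delta',L)$ is connected, it is contained in the identity component $\tilde{G}(\Delta',L)^\circ$. To get the reverse inclusion it suffices to show that the two groups have the same Lie algebra, i.e. that $\dim GG(\Delta',L)=\dim \tilde{G}(\Delta',L)$, or equivalently that $\Lie(\tilde{G}(\Delta',L))\sub \Lie(GG(\Delta',L))$. For this I would compute $\Lie(\tilde{G}(\Delta',L))$ explicitly as the stabiliser of $L(\sigma_{\Delta'})$ in $L(\Phi,L)$ under the $\ad$-action: an element $x=\sum_i c_i h_i+\sum_{\gamma} a_\gamma e_\gamma$ lies in this stabiliser iff $[x,L(\sigma_{\Delta'})]\sub L(\sigma_{\Delta'})$, that is iff $[x,D]\sub L(\sigma_{\Delta'})$ and $[x,e_\alpha]\in L(\sigma_{\Delta'})$ for all $\alpha\in\Delta'$. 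Condition $(*)$ is exactly what forces the coefficients $a_\gamma$ to vanish for $\gamma\in\Phi\sm\Delta'$: for such $\gamma$ pick $\alpha\in\Delta$ (hence $\alpha\in\Delta'$) with $\alpha+\gamma\in\Phi$ and $c_{\alpha,\gamma}\in L^*$; then the $e_{\alpha+\gamma}$-component of $[x,e_{-\gamma}]$ is (up to a unit) $c_{-\gamma,\alpha}a_\gamma$ — wait, more carefully, one uses that $-\gamma$ together with a suitable $\alpha\in\Delta$ has $\alpha-\gamma\in\Phi\sm\Delta'$ to pin down $a_{\gamma}$; in any case the Jacobson-radical computation of Theorem~\ref{Jacobson} and Lemma~\ref{Jnilpotent} (which over a field is the statement that the tangent-space stabiliser is small) yields that $\Lie(\tilde{G}(\Delta',L))=D\oplus\bigoplus_{\gamma\in\Delta'} L\, e_\gamma=\Lie(GG(\Delta',L))$. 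Hence $\tilde{G}(\Delta',L)^\circ$ and $GG(\Delta',L)$ are closed connected subgroups with the same Lie algebra over a field, and therefore coincide.

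The main obstacle I expect is the tangent-space computation: one must be careful that $\Lie(\tilde G(\Delta',L))$ is genuinely no larger than $\Lie(GG(\Delta',L))$, i.e. that every $\ad$-derivation preserving $L(\sigma_{\Delta'})$ has no off-support root components. This is where condition $(*)$ is essential and where the hint in the excerpt — that the proof is as in Lemma~13 of \cite{Gvoz2A1} but with the reference to the old Lemma~12 replaced by a reference to Theorem~\ref{Jacobson} — is used: Theorem~\ref{Jacobson} (applied in the nilpotent/field situation via Lemma~\ref{Jnilpotent}) is precisely the group-level incarnation of this infinitesimal statement, so one transfers it to the Lie algebra by passing to the dual numbers $L[\eps]/(\eps^2)$ and identifying $G(\Phi,L[\eps]/(\eps^2),\,(\eps))$-part of $S(\sigma_{\Delta'})$ with the stabiliser subalgebra. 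A secondary technical point is checking connectedness of $GG(\Delta',L)$, but this is immediate from its description as a semidirect product of a connected reductive group (the extended Chevalley group) with an affine space.
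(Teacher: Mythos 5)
Your plan is essentially the paper's own argument: the paper proves this lemma by carrying over Lemma~13 of \cite{Gvoz2A1}, where (as you indicate) both subgroups are closed, $GG(\Delta',L)$ is a smooth connected subgroup contained in $\tilde{G}(\Delta',L)$, and the identity-component statement comes from a tangent-space bound obtained exactly via Theorem~\ref{Jacobson} (in the form of Lemma~\ref{Jnilpotent}) applied over the dual numbers $L[\eps]$, which is a local ring with nilpotent maximal ideal. Two small corrections to your sketch: in the hand computation of the stabiliser subalgebra one brackets $x$ with $e_\alpha$ for the root $\alpha\in\Delta$ supplied by $(*)$ and uses that $\alpha+\gamma\notin\Delta'$ (Lemma~\ref{EqualIdeals} applied to the net $\sigma_{\Delta'}$), not a bracket with $e_{-\gamma}$; and the conclusion should be drawn from $GG(\Delta',L)\le\tilde{G}(\Delta',L)^\circ$ together with equality of dimensions, since "closed connected subgroups with the same Lie algebra coincide" fails in positive characteristic, whereas the dimension argument (which your containment plus the tangent-space bound already gives) is valid in all characteristics.
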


\begin{lem}\label{DecomposeForField} Let $R=K$ be a field. Then $\tilde{G}(\Delta',K)=GG(\Delta',K)\ovl{W}(\Phi,\Delta')$.
\end{lem}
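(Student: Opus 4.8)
The plan is to prove the equality first over an algebraically closed field and then to descend to $K$. So let $L=\ovl{K}$ be the algebraic closure of $K$; I claim $\tilde{G}(\Delta',L)=GG(\Delta',L)\ovl{W}(\Phi,\Delta')$. By Lemma~\ref{ConnectedComponentOverField}, $GG(\Delta',L)$ is the identity component of the algebraic group $\tilde{G}(\Delta',L)$, hence normal in it; and $T(\Phi,L)$ is a maximal torus of $GG(\Delta',L)$, since $GG(\Delta',L)$ is the extended Chevalley group $GG(\Delta'\cap(-\Delta'),L)$ --- which has $T(\Phi,L)$ as a maximal torus --- extended by a unipotent group. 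For $g\in\tilde{G}(\Delta',L)$, conjugation by $g$ preserves $GG(\Delta',L)$, so $g\,T(\Phi,L)\,g^{-1}$ is again a maximal torus of $GG(\Delta',L)$; by conjugacy of maximal tori in a connected affine algebraic group there is $h\in GG(\Delta',L)$ with $hg\in N(\Phi,L)$. Thus $\tilde{G}(\Delta',L)=GG(\Delta',L)\cdot\bigl(N(\Phi,L)\cap\tilde{G}(\Delta',L)\bigr)$. It then remains to note that an $n\in N(\Phi,L)$ with image $w\in W(\Phi)$ fixes $D$ and carries each $e_\gamma$ to a unit multiple of $e_{w\gamma}$, so $\leftact{n}{L(\sigma_{\Delta'})}=D\oplus\bigoplus_{\gamma\in\Delta'}L e_{w\gamma}$ coincides with $L(\sigma_{\Delta'})$ exactly when $w\Delta'=\Delta'$, i.e.\ $w\in W(\Phi,\Delta')$; for such $w$ the standard representative $\dot{w}\in\ovl{W}(\Phi)$ already lies in $S(\sigma_{\Delta'})$, hence in $\ovl{W}(\Phi,\Delta')$, and every element of $N(\Phi,L)\cap\tilde{G}(\Delta',L)$ equals $\dot{w}t$ for some $t\in T(\Phi,L)\sub GG(\Delta',L)$. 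This yields the asserted equality over $L$.

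To descend to $K$ I would use that $\ovl{W}(\Phi,\Delta')$ is finite and consists of elements of $G(\Phi,\Z)\sub G(\Phi,K)$. Given $g\in\tilde{G}(\Delta',K)\sub\tilde{G}(\Delta',L)$, the previous step writes $g=g'\dot{w}$ with $g'\in GG(\Delta',L)$ and $\dot{w}\in\ovl{W}(\Phi,\Delta')$; then $g'=g\dot{w}^{-1}\in G(\Phi,K)$. Since $GG(\Delta',-)$ is the group of points of a closed subscheme of $G(\Phi,-)$ --- part of the structural description of $GG(\Delta',-)$ recalled before Lemma~\ref{ConnectedComponentOverField}, cf.\ also~\cite{Gvoz2A1} --- one has $GG(\Delta',K)=G(\Phi,K)\cap GG(\Delta',L)$, so $g'\in GG(\Delta',K)$ and $g\in GG(\Delta',K)\ovl{W}(\Phi,\Delta')$. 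The reverse inclusion is immediate, since $GG(\Delta',K)\sub\tilde{G}(\Delta',K)$ by definition and $\ovl{W}(\Phi,\Delta')=\ovl{W}(\Phi)\cap S(\sigma_{\Delta'})\sub\tilde{G}(\Delta',K)$.

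The step I expect to be the main obstacle is the descent, more precisely the identity $GG(\Delta',K)=G(\Phi,K)\cap GG(\Delta',L)$: this rests on $GG(\Delta',-)$ being genuinely a closed subscheme of $G(\Phi,-)$ rather than merely a family of abstract subgroups, which is exactly the content of the structure result preceding Lemma~\ref{ConnectedComponentOverField} (or of the corresponding statement in~\cite{Gvoz2A1}). A secondary point is to check that the torus-conjugacy argument over $L$, together with the implicit identification of the component group $\tilde{G}(\Delta',L)/GG(\Delta',L)$ with the Weyl-group subquotient $W(\Phi,\Delta')/W(\Delta'\cap(-\Delta'))$, uses nothing beyond the hypotheses already present in Lemma~\ref{ConnectedComponentOverField}. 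An alternative to the reduction to $L$ would be to run a refined Bruhat decomposition $g=v\dot{w}b$ directly over $K$ for $g\in\tilde{G}(\Delta',K)$ and to show $w\in W(\Phi,\Delta')$ and $v,b\in GG(\Delta',K)$; but this seems to require the same structural input together with more bookkeeping, so I would prefer the route above.
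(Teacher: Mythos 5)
Your proof is correct, and it is in the spirit the paper prescribes: Lemma~\ref{DecomposeForField} is to be deduced from Lemma~\ref{ConnectedComponentOverField}, and the paper delegates the details of that deduction to \cite{Gvoz2A1}, so you have in effect reconstructed them. The two ingredients you add on top of Lemma~\ref{ConnectedComponentOverField} are both sound: conjugacy of maximal tori inside the connected closed subgroup $GG(\Delta',L)$ over the algebraic closure identifies the component representatives with elements of $N(\Phi,L)$, and your computation of $N(\Phi,L)\cap\tilde{G}(\Delta',L)$ via the adjoint action on $L(\sigma_{\Delta'})$ correctly pins the Weyl images down to $W(\Phi,\Delta')$ (with the harmless rearrangement $\dot{w}t=(\dot{w}t\dot{w}^{-1})\dot{w}$ to land in $GG(\Delta',L)\ovl{W}(\Phi,\Delta')$). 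The step you flag as the main obstacle, $GG(\Delta',K)=G(\Phi,K)\cap GG(\Delta',L)$, is indeed exactly what the structural description preceding Lemma~\ref{ConnectedComponentOverField} provides: over a field the product decomposition makes $GG(\Delta',-)$ the functor of points of an affine group scheme mapping monomorphically to $G(\Phi,-)$, hence a closed $K$-subgroup, and membership of a $K$-point of $G(\Phi,-)$ in a closed $K$-subscheme can be tested after extension of scalars (note also that over a field the presheaf value $T(\Phi,K)E(\Delta',K)$ coincides with the $K$-points of that scheme, since a split reductive group over a field is generated by its torus and root subgroups). As a side remark, the detour through $L$ can be avoided: conjugation by $g\in\tilde{G}(\Delta',K)$ preserves the $K$-form of the identity component, and maximal split tori of a smooth connected affine $K$-group are conjugate over $K$, which yields $hg\in N(\Phi,K)=T(\Phi,K)\ovl{W}(\Phi)$ directly; but your version is equally valid.
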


\medskip

We need one more lemma.

\begin{lem} \label{Weyl}\!\! Let $R\!$ be a local ring with nilpotent maximal ideal ${\M\unlhd R}$. Let $\sigma$ be a net of ideals in $R$. Set $\Delta'=\{\alpha\in\Phi\colon \sigma_{\alpha}=R\}$ \textup(this is such a closed subset of roots that $\rho_{\M}(\sigma)=\sigma_{\Delta'}$\textup). Let ${g\in S(\sigma),}$ and let $\rho_{\M}(g)=\ovl{g_1}\ovl{w},$ where $\ovl{g_1}\in GG(\Delta',R/\M),$ $\ovl{w}\in \ovl{W}(\Phi,\Delta')\le G(\Phi,R/\M)$. Then $\ovl{w}\in \ovl{W}(\Phi,\sigma)$ in a sense that the image of the element $\ovl{w}$ in the group $W(\Phi)$ preserves the net $\sigma$.
\end{lem}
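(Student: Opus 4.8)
The plan is to reduce the statement to a question about a single root at a time and then use the hypothesis $(*)$ together with the fact that $g$ stabilises $L(\sigma)$. Fix $\gamma\in\Phi$; we must show that $\sigma_{w\gamma}=\sigma_\gamma$, where $w\in W(\Phi)$ is the image of $\ovl w$. Since $R$ is local with maximal ideal $\M$, every ideal $\sigma_\gamma$ is either all of $R$ (i.e.\ $\gamma\in\Delta'$) or is contained in $\M$. The case distinction is therefore: if $\gamma\in\Delta'$ then $w\gamma\in\Delta'$ already because $\ovl w\in\ovl W(\Phi,\Delta')$ and this group preserves $\sigma_{\Delta'}$ by definition; so we only need to treat roots with $\sigma_\gamma\sub\M$, and we must rule out that $w$ sends such a $\gamma$ to a root $\delta$ with $\sigma_\delta\subsetneq\sigma_\gamma$ (after which the reverse inclusion follows by applying the same argument to $w^{-1}$ and $\ovl g^{-1}$).

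First I would pick a representative $g_2\in N(\Phi,R)$ of $\ovl w$ lifting $\ovl w$ along the reduction $\rho_\M$ — this is possible since $N(\Phi,-)$ is smooth and $\M$ is nilpotent — and write $g=g_1 g_2$ with $\rho_\M(g_1)=\ovl{g_1}\in GG(\Delta',R/\M)$, hence $g_1\in\rho_\M^{-1}(GG(\Delta',R/\M))$. The key point is that conjugation by $g_1$ does not move the ``level'' of a root element modulo $\M$: more precisely, for $\delta\in\Phi$ the $e_\delta$-coordinate of $\leftact{g_1}{e_\delta}$ is a unit (it reduces mod $\M$ to a unit because $\ovl{g_1}$ lies in $GG(\Delta',R/\M)$, a group built from the torus and root subgroups $X_\beta$ with $\beta\in\Delta'$, which act on $e_\delta$ by a unit plus terms supported on $\delta+\beta$), while conjugation by $g_2\in N(\Phi,R)$ permutes the weight spaces by $w$ exactly, sending $Re_\delta$ to $R^*e_{w\delta}$ (up to a unit). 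Combining, for any $v\in L(\sigma)$ one computes the $w\delta$-coordinate of $\leftact{g}{v}$ in terms of the $\delta$-coordinate of $v$ and units.

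Now apply this with $v=e_\delta$ for a suitable $\delta$. Given $\gamma$ with $\sigma_\gamma\sub\M$, use $(*)$: there is $\alpha\in\Delta$ with $\alpha+\gamma\in\Phi$ and $c_{\alpha,\gamma}\in R^*$. Since $\alpha\in\Delta$ we have $\sigma_\alpha=R$, and by Lemma~\ref{EqualIdeals} $\sigma_{\alpha+\gamma}=\sigma_\gamma$. Take $v=\xi e_{\alpha+\gamma}$ for $\xi\in\sigma_\gamma$; then $\leftact{g}{v}\in L(\sigma)$ since $g\in S(\sigma)$, so its $w(\alpha+\gamma)$-coordinate lies in $\sigma_{w(\alpha+\gamma)}$, and by the previous paragraph that coordinate is $\eps\xi$ for some unit $\eps$. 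Hence $\sigma_{w(\alpha+\gamma)}\supseteq\sigma_\gamma$. One then needs to relate $\sigma_{w(\alpha+\gamma)}$ back to $\sigma_{w\gamma}$: since $\alpha\in\Delta$, $w\alpha$ need not lie in $\Delta$, so Lemma~\ref{EqualIdeals} does not apply directly; instead I would argue that $\ovl w$ preserves $\Delta'$ and hence the partition of $\Phi$ into $\Delta'$ and its complement, so $w(\alpha+\gamma)$ and $w\gamma$ differ by $w\alpha$, and rerun the coordinate computation with $v=\xi\, e_{w\gamma}$ pushed forward, or more cleanly repeat the whole argument symmetrically: apply the inclusion just obtained to both $w$ (giving $\sigma_{w\gamma}\subseteq$ something) and $w^{-1}$ with $g^{-1}\in S(\sigma)$ (giving the reverse), exploiting that the analogous structural constant for $w^{-1}$ is controlled by the same $(*)$ applied at the root $w\gamma\notin\Delta$ if $w\gamma\notin\Delta'$ — and if $w\gamma\in\Delta'$ we are already done since then $\sigma_{w\gamma}=R\supseteq\sigma_\gamma$ forces (by symmetry applied to $w^{-1}$) $\sigma_\gamma=R$, contradicting $\sigma_\gamma\sub\M$ unless $\sigma_\gamma=R$ too.

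The main obstacle I anticipate is precisely this last bookkeeping: making the passage from ``$\sigma_{w(\alpha+\gamma)}\supseteq\sigma_\gamma$'' to ``$\sigma_{w\gamma}=\sigma_\gamma$'' airtight, because $w$ does not respect $\Delta$ and the structural-constant hypothesis $(*)$ is phrased relative to $\Delta$, not relative to its $w$-translate. The clean way around it is to observe that $(*)$ is a hypothesis on the pair $(\Phi,\Delta,R)$ that is in force throughout, and to note that we never actually need $w\alpha\in\Delta$: we only need \emph{some} root $\alpha'\in\Delta$ adjacent to $w\gamma$ with invertible structure constant, which $(*)$ supplies directly at $w\gamma$ (provided $w\gamma\notin\Delta$; the case $w\gamma\in\Delta$ meaning $\sigma_{w\gamma}=R$ is handled as above). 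Thus the argument is genuinely a ``one root at a time, use $(*)$ at that root'' argument, and the role of $g_1,g_2$ is only to guarantee that the relevant Chevalley-basis coordinate of $\leftact{g}{(\xi e_{\bullet})}$ is $\xi$ times a unit.
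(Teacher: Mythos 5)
Your key computational claim is false as stated, and it is exactly the point where the paper has to work: you assert that for $g_1$ with $\rho_{\M}(g_1)=\ovl{g_1}\in GG(\Delta',R/\M)$ the $e_\delta$-coordinate of $\leftact{g_1}{e_\delta}$ is a unit, ``because each generator $x_\beta(\xi)$, $\beta\in\Delta'$, acts on $e_\delta$ by a unit plus terms supported on $\delta+i\beta$.'' That argument only works generator by generator; in a product the shifted terms can return and cancel the diagonal one. Concretely, $GG(\Delta',R/\M)$ contains the whole subsystem Chevalley group of $\Delta'\cap(-\Delta')\supseteq\Delta$, hence elements such as $x_\beta(1)x_{-\beta}(-1)x_\beta(1)$ ($\beta\in\Delta$), which send $e_\delta$ to $\pm e_{s_\beta\delta}$; for such $\ovl{g_1}$ the $e_\delta$-coordinate of $\leftact{g_1}{e_\delta}$ lies in $\M$ whenever $\langle\delta,\beta^\vee\rangle\ne 0$, so the $w\delta$-coordinate of $\leftact{g}{\xi e_\delta}$ is \emph{not} $\xi$ times a unit and your computation yields no information about $\sigma_{w\delta}$. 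The missing idea is the normalization the paper performs first: since $GG(\Delta',R/\M)=T(\Phi,R/\M)E(\Delta',R/\M)$ and $T(\Phi,R)E(\Delta',R)\le S(\sigma)$ maps onto it, one may multiply $g$ by an element of $S(\sigma)$ and assume $\ovl{g_1}=e$; then, lifting $\ovl{w}$ to $w\in\ovl{W}(\Phi)\le G(\Phi,R)$ and writing $g=g_1w$, one has $g_1\in G(\Phi,R,\M)$, and only now is the relevant coordinate $\xi(\pm1+\zeta)$ with $\zeta\in\M$, i.e.\ a unit multiple of $\xi$.

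Two further simplifications you missed, which also remove the ``bookkeeping obstacle'' you describe: there is no need for condition $(*)$ or for the detour through $e_{\alpha+\gamma}$ at all --- once $\ovl{g_1}=e$ is arranged you can test $g$ directly on $v=\xi e_\gamma$ for every root $\gamma$ and every $\xi\in\sigma_\gamma$, obtaining $\sigma_\gamma\sub\sigma_{w\gamma}$ for all $\gamma$; and instead of rerunning the argument for $w^{-1}$ and $g^{-1}$ (which requires re-decomposing $\rho_{\M}(g^{-1})$), equality follows at once from the single system of inclusions because $w$ has finite order: $\sigma_\gamma\sub\sigma_{w\gamma}\sub\cdots\sub\sigma_{w^n\gamma}=\sigma_\gamma$. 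As written, your proposal has a genuine gap at its central step; repairing it essentially forces you back onto the paper's route.
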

\begin{proof}
	Since
	$$
	GG(\Delta',R/\M)=T(\Phi,R/\M)E(\Delta',R/\M),
	$$
	and $T(\Phi,R)E(\Delta',R)$ maps surjectively onto $T(\Phi,R/\M)E(\Delta',R/\M)$, it follows that multiplying $g$ by a suitable element of the group $T(\Phi,R)E(\Delta',R)\le S(\sigma)$, we may assume that $\ovl{g_1}=e$.
	
	Let $w\in \ovl{W}(\Phi)\le G(\Phi,R)$ be a preimage of the element $\ovl{w}$, and let $g=g_1w$. Then $g_1\in G(\Phi,R,\M)$. We must show that $w$ preserves the net $\sigma$ (acting on $\Phi$ by its image in $W(\Phi)$). 
	
	Let $\alpha\in\Phi$. Since $w$ has finite order, it is enough to show that $\sigma_{\alpha}\sub \sigma_{w\alpha}$. Let $\xi\in\sigma_{\alpha}$. Since $g\in S(\sigma)$, it follows that $(\leftact{g}{\xi e_{\alpha}})^{w\alpha}\in\sigma_{w\alpha}$. on the other hand, we have $(\leftact{g}{\xi e_{\alpha}})^{w\alpha}=\pm(\leftact{g_1}{\xi e_{w\alpha}})^{w\alpha}=\xi(\pm 1+\zeta)$, where $\zeta\in \M$. The element $(\pm 1+\zeta)$ is invertible; hence we have $\xi\in\sigma_{w\alpha}$, q.e.d.
\end{proof}

Now we prove Theorem~\ref{local}. Let $\M\unlhd R$ be the maximal ideal. Let $\Delta'$ be as in Lemma~\ref{Weyl}. Let $g\in S(\sigma)$. By Lemma~\ref{DecomposeForField}, $\rho_{\M}(g)=\ovl{g_1}\ovl{w}$, where $\ovl{g_1}\in GG(\Delta',R/\M)$, $\ovl{w}\in \ovl{W}(\Phi,\Delta')\le G(\Phi,R/\M)$. In addition, by Lemma~\ref{Weyl} we have ${\ovl{w}\in \ovl{W}(\Phi,\sigma)\le G(\Phi,R/\M)}$; hence multiplying $g$ by the corresponding element of the group $\ovl{W}(\Phi,\sigma)\le G(\Phi,R)$, we may assume that $\ovl{w}=e$. Further as in the proof of Lemma~\ref{Weyl}, multiplying $g$ by a suitable element of the group $T(\Phi,R)E(\Delta',R)\le S(\sigma)$, we may assume that $\ovl{g_1}=e$. Therefore, $g\in G(\Phi,R,\M)$. Then by Theorem~\ref{Jacobson} we have $g\in T(\Phi,R)E(\sigma)$. 

\section{Relative net subgroups}

Let $\sigma$ be a net of ideals in the ring $R$, and let $I\unlhd R$ be an ideal. Set
$$
S(\sigma,I)=S(\sigma)\cap G(\Phi,R,I)\tp
$$

Before we define relative elementary net subgroups, let us prove two technical lemmas.

\begin{lem}\label{rank2} Let $\alpha$,$\gamma\in\Phi$ and $\alpha\ne\pm\gamma$. Set
	$$
	U=\big\<x_{i\alpha+j\gamma}(t^j\theta)\colon i\alpha+j\gamma\in\Phi,\,i,j\in\Z,\,j>0,\, \theta\in R\big\>\le G(\Phi,R[t]),
	$$
	where $t$ is a free variable.
	Then the subgroup $U$ is normalised by the subgroup $G(\{\alpha,-\alpha\},R)$.
\end{lem}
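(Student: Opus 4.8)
For a root $\delta = i\alpha + j\gamma$ write $j(\delta) = j$; thus $U$ is generated by the elements $x_\delta(t^{j(\delta)}r)$ with $j(\delta) > 0$ and $r \in R$. Since $G(\{\alpha,-\alpha\},R)$ is closed under inversion, it suffices to prove that $\leftact{g}{x_\delta(t^{j(\delta)}r)} \in U$ for each such generator and each $g \in G(\{\alpha,-\alpha\},R)$. The plan is to realise $U$ as the image of a unipotent radical under a ``twist by $t$'' homomorphism and to push the action of $g$ through that homomorphism.

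First I would pass to a rank-$2$ situation. Put $\Phi' = \Phi \cap (\Z\alpha \oplus \Z\gamma)$: a closed subsystem of rank $2$ containing $\pm\alpha$ and all roots of the form $i\alpha + j\gamma$, so that $U \le G(\Phi',R[t])$ and $G(\{\alpha,-\alpha\},R) \le G(\Phi',R)$. Since $j$ is additive on roots, the sets $\Psi^+ := \{\delta \in \Phi' : j(\delta) > 0\}$ and $\Psi^+ \cup \{\alpha\}$ are closed, and the latter is a system of positive roots of $\Phi'$ in which $\alpha$ is simple; hence $\Psi^+$ is precisely the set of roots of the unipotent radical $U_P$ of the parabolic $P = LU_P \le G(\Phi',-)$ whose Levi $L$ has root subsystem $\{\pm\alpha\}$. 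Thus $U \le U_P(R[t])$, while $G(\{\alpha,-\alpha\},R) \le L(R)$, and $L$ normalises $U_P$ as group schemes; in particular $\leftact{g}{U} \subseteq U_P(R[t])$ already.

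It then remains to see that conjugation by $g$ preserves the smaller subgroup $U$, and here I would exploit the grading $\delta \mapsto j(\delta)$. Writing elements of $U_P$ in root coordinates $(\eta_\delta)_{\delta \in \Psi^+}$ relative to a fixed ordering of $\Psi^+$, the group law is weighted-homogeneous for the weighting $\mathrm{wt}(\eta_\delta) = j(\delta)$: this follows from the Chevalley commutator formula, since the term $x_{m\delta_1 + n\delta_2}(c\,\eta_1^m\eta_2^n)$ sits at the root $m\delta_1 + n\delta_2$ of weight $mj(\delta_1) + nj(\delta_2)$. Consequently $x_\delta(\eta) \mapsto x_\delta(t^{j(\delta)}\eta)$ extends to a group homomorphism $m_t \colon U_P(R) \to U_P(R[t])$, and its image is precisely $U$ since $U_P$ is generated by its root subgroups over any ring. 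The same weighted-homogeneity, applied to the conjugation morphism $L \times U_P \to U_P$ (which one checks on the root subgroups $X_{\pm\alpha}$ and the maximal torus of $L$, these generating $L$ as a group scheme), shows that $m_t$ is $L(R)$-equivariant: $\leftact{g}{m_t(u)} = m_t(\leftact{g}{u})$ for $g \in L(R)$, $u \in U_P(R)$. Combining $U = m_t(U_P(R))$, this equivariance, and $\leftact{g}{U_P(R)} = U_P(R)$ then gives $\leftact{g}{U} = m_t(\leftact{g}{U_P(R)}) = m_t(U_P(R)) = U$ for every $g \in G(\{\alpha,-\alpha\},R)$.

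The main obstacle is that one must handle all of $G(\{\alpha,-\alpha\},R) = \ph_\alpha(\SL(2,R))$, not merely the subgroup generated by the root elements $x_{\pm\alpha}(\xi)$ and $h_\alpha(\eps)$: because $\SL(2,R) \ne E(2,R)$ in general there is no naive generator-by-generator check, which is exactly why the argument has to be routed through the parabolic $P$ and the twist $m_t$ above rather than through the commutator formula alone. The only remaining delicate point is bookkeeping — making precise that $m_t$ is a homomorphism and that it is $L(R)$-equivariant — and this rests solely on the identity $j(\delta + \delta') = j(\delta) + j(\delta')$ and may, if preferred, be phrased as a weighted-homogeneity statement for the relevant universal polynomials over $\Z$.
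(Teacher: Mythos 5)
Your argument is correct, but it takes a genuinely different route from the paper's. The paper deals with the key obstacle (an arbitrary $g\in G(\{\alpha,-\alpha\},R)$ need not be elementary) by a faithfully flat base change: it chooses a faithfully flat extension $\tilde R$ of $R$ with $g\in E(\{\alpha,-\alpha\},\tilde R)$, conjugates a single generator using the Chevalley commutator formula (which preserves the shape $\prod x_{i\alpha+j\gamma}(t^{j}\theta_{i,j})$ because adding multiples of $\pm\alpha$ does not change $j$), and then descends: the conjugate lies in $G(\Phi,R[t])$, and uniqueness of the coordinates on the unipotent part forces $\theta_{i,j}\in R$. You instead stay over $R$ and promote the generator-by-generator computation to a scheme-level statement: $U$ is the image of $U_P(R)$ under the weight-$j$ twist $m_t$, and conjugation by the Levi commutes with $m_t$. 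Both proofs ultimately rest on the same additivity $j(\delta+\delta')=j(\delta)+j(\delta')$; yours buys a sharper conclusion ($\leftact{g}{U}=U$, together with an explicit normal form for the elements of $U$) at the price of invoking the Levi decomposition $P=L\ltimes U_P$ of a parabolic subgroup scheme over $\Z$ and of the one step you leave compressed: passing from equivariance on $X_{\pm\alpha}$ and the torus to equivariance on all of $L(R)$. That step does require an argument precisely because $L(R)$ is not generated by those $R$-points; the clean way to finish is to observe that the equivariance condition cuts out a closed subgroup subscheme of $L$ (it is the vanishing locus of the coefficients of finitely many polynomial identities in the coordinates on $U_P$), which contains $T$ and $X_{\pm\alpha}$ and hence, since these generate the Levi on every geometric fibre and $L$ is smooth (so reduced and Jacobson) over $\Z$, equals $L$; alternatively one can argue via density of the big cell $X_{-\alpha}TX_{\alpha}$ in $L$. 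With that point made explicit, your proof is complete, and it is a legitimate alternative to the paper's extension-and-descent argument.
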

\begin{proof}
	Let $g\in G(\{\alpha,-\alpha\},R)$. Choose a faithfully flat extension $\tilde{R}$ of the ring $R$ such that $g\in E(\{\alpha,-\alpha\},\tilde{R})$. It follows from the Chevalley commutator formula that
	$$
	g_1=\leftact{g}{x_{i_0\alpha+j_0\gamma}(t^{j_0}\theta_0)}=\prod_{i\alpha+j\gamma\in\Phi,\,j>0} x_{i\alpha+j\gamma}(t^j\theta_{i,j}),
	$$
	where $\theta_{i,j}\in \tilde{R}$. In addition, we have $g_1\in G(\Phi,R[t])$; hence in fact we have $\theta_{i,j}\in R$. 
\end{proof}

\begin{lem} \label{notA1} Let $\alpha\in \Delta$. Let $\Delta_0$ be the irreducible component of the system $\Delta$ containing the root $\alpha$. Assume that $\Delta_0\ne A_1$. Finally, let $\xi,\zeta,\eta\in R$. Then $t_{\alpha}^{\zeta,\eta}(\xi)\in E(\Delta_0,R)$.
\end{lem}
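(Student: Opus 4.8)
The plan is to write $t_{\alpha}^{\zeta,\eta}(\xi)$ as an explicit commutator of two elements, each a product of two (or a few) root unipotents $x_{\delta}(\ast)$ with $\delta$ ranging over a rank-two subsystem of $\Delta_0$ through $\alpha$; since every such $x_{\delta}(\ast)$ lies in $E(\Delta_0,R)$, this gives the claim. As $\Delta_0$ is irreducible of rank at least $2$, the root $\alpha$ is not orthogonal to all of $\Delta_0$; choosing a root non-orthogonal to $\alpha$ and, if needed, replacing it by its opposite, we get $\gamma\in\Delta_0$ with $\alpha+\gamma\in\Phi$. Since $\Delta_0$ is a closed set of roots, $\alpha+\gamma\in\Delta_0$, and likewise $2\alpha+\gamma=\alpha+(\alpha+\gamma)\in\Delta_0$ whenever it is a root. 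Let $\Delta_1\le\Delta_0$ be the rank-two subsystem generated by $\alpha$ and $\gamma$; then $E(\Delta_1,R)\le E(\Delta_0,R)$, so it suffices to prove $t_{\alpha}^{\zeta,\eta}(\xi)\in E(\Delta_1,R)$.

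The heart of the argument is the case $\Delta_1=A_2$, which in particular covers all simply laced $\Delta_0$. Here $\Delta_1=\{\pm\alpha,\pm\gamma,\pm(\alpha+\gamma)\}$ and $G(\Delta_1,R)\cong\SL(3,R)$, the embedding $\ph_{\alpha}$ being the upper-left $\SL_2$-block. Put $A=x_{\alpha+\gamma}(\xi\zeta)\,x_{\gamma}(\xi\eta)$ and $B=x_{-\alpha-\gamma}(-\eta)\,x_{-\gamma}(\zeta)$, the parameters to be rescaled by the signs of the structure constants of $\Delta_1$. A direct $3\times3$ matrix computation — with $A$ and $B$ becoming $x_{13}(\xi\zeta)x_{23}(\xi\eta)$ and $x_{31}(-\eta)x_{32}(\zeta)$ — shows that $[A,B]=ABA^{-1}B^{-1}$ is the transvection $e+\begin{pmatrix}\zeta\\ \eta\end{pmatrix}\xi\begin{pmatrix}-\eta & \zeta\end{pmatrix}$ in the upper-left block and the identity elsewhere, i.e.\ $[A,B]=t_{\alpha}^{\zeta,\eta}(\xi)$. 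Since $A,B\in E(\Delta_1,R)$, this settles the $A_2$ case.

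In the remaining cases $\Delta_1$ is of type $B_2$ or $G_2$, which forces $\Delta_0$ to be of type $B$, $C$, $F_4$ or $G_2$ and $\alpha$ to be of the appropriate length. Here I would run the analogous computation, enlarging $A$ and $B$ by the extra factors $x_{i\alpha+\gamma}(\ast)$ resp.\ $x_{-(i\alpha+\gamma)}(\ast)$ corresponding to the remaining roots of $\Delta_1$ at $\gamma$-level $\pm1$, and expand $[A,B]$ by the full Chevalley commutator formula. By Lemma~\ref{rank2} the factors at positive $\gamma$-level form a group normalised by the image of $\ph_\alpha$ (and dually for negative level), so, using the Chevalley relations to separate the $\gamma$-levels, $[A,B]$ can be brought to the form $t_{\alpha}^{\zeta,\eta}(\xi)\,p$, where $p$ is a product of root unipotents $x_{\delta}(\ast)$ with $\delta\in\Delta_1\sm\{\pm\alpha\}$, all of which lie in $E(\Delta_0,R)$. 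Solving for $t_{\alpha}^{\zeta,\eta}(\xi)$ finishes the proof.

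I expect the genuine work to be in this last step: choosing the parameters of $A$ and $B$ so that, after the Chevalley commutator formula is applied in the $B_2$ and $G_2$ cases, the component of $[A,B]$ in the image of $\ph_{\alpha}$ is \emph{precisely} $t_{\alpha}^{\zeta,\eta}(\xi)$ (possibly up to a factor $x_{\pm\alpha}(\ast)$, which is harmless) rather than $t_{\alpha}^{\zeta,\eta}(\xi)$ times a further symplectic transvection, which would make the argument circular — together with keeping track of all the structure constants and signs. The $A_2$ case, by contrast, is the short explicit matrix identity above.
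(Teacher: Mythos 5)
Your $A_2$ case is correct and complete: the identity $[x_{13}(\xi\zeta)x_{23}(\xi\eta),\,x_{31}(-\eta)x_{32}(\zeta)]=e+\begin{pmatrix}\zeta\\ \eta\end{pmatrix}\xi\begin{pmatrix}-\eta & \zeta\end{pmatrix}$ checks out (the key point being $\begin{pmatrix}-\eta&\zeta\end{pmatrix}\begin{pmatrix}\zeta\\ \eta\end{pmatrix}=0$), and it is essentially the decomposition of transvections that the paper invokes by citing Lemma~3 of~\cite{StepVavDecomp}; the passage from $\SL(3,R)$ to the possibly non-simply-connected subsystem subgroup is harmless, since the identity only involves images of root elements and of $\ph_\alpha$. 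Also, your $G_2$ worry is vacuous: a rank-two subsystem of type $G_2$ inside $\Delta_0$ would force $\Phi=G_2$, which is excluded because $\Delta$ is a proper subsystem of the irreducible system $\Phi$ — this is exactly how the paper dismisses that case.

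The genuine gap is the $B_2=C_2$ case, which you yourself flag as unfinished and which cannot be avoided: for a long root $\alpha$ in a component of type $C_\ell$ ($\ell\ge 2$), or a short root in type $B_\ell$, the root $\alpha$ lies in no $A_2$ inside $\Delta_0$, so the whole lemma rests on this case, and your proposal only asserts (does not verify) that parameters for $A$ and $B$ can be chosen so that the level-zero component of $[A,B]$ is exactly $t_\alpha^{\zeta,\eta}(\xi)$ rather than, as you note, $t_\alpha^{\zeta,\eta}(\xi)$ times some further non-elementary element of $\ph_\alpha(\SL(2,R))$. That verification is precisely the mathematical content here: in $\Sp(4,R)$ the element $t_\alpha^{\zeta,\eta}(\xi)$ is a symplectic transvection whose vector depends on $(\zeta,\eta)$ quadratically, and the known decompositions (Kopeiko's Lemmas~1.2 and~1.6 in~\cite{KopejkoSpStab}, which is what the paper cites for this case) are structurally different from a single commutator of level-$\pm1$ products: one conjugates a long-root unipotent by a product of short-root elements and then cancels the surplus by Eichler-type short-root factors, exploiting that those corrections depend on $(\zeta,\eta)$ only linearly. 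Without carrying out such a computation, or citing it, the case that actually needs the lemma is unproved. For comparison, the paper's own proof is a short reduction to the literature: $A_2$ via~\cite{StepVavDecomp}, $C_2$ via~\cite{KopejkoSpStab}, and $G_2$ excluded as above; your $A_2$ identity replaces the first citation by an explicit computation, but the second citation (or an equivalent explicit $\Sp_4$ computation) is still needed.
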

\begin{proof}
	Since $\Delta_0\ne A_1$, it follows that the root $\alpha$ is contained in a subsystem of type $A_2$, $C_2$ or $G_2$. In the first case the statement follows from Lemma~3 in~\cite{StepVavDecomp}. In the second case, from Lemmas~1.2 (if $\alpha$ is a long root) and~1.6 (if it is short) in~\cite{KopejkoSpStab}. The third case can not arise because we assume that $\Delta$ is a proper subsystem in an irreducible root system~$\Phi$.
\end{proof}

\begin{rem*} For the system $G_2$ this statement holds true as well; however, this is inessential for our purposes.
\end{rem*}

Let $\sigma$ be a net of ideals, and let $I\unlhd R$ be an ideal. Set
$$
\hat{E}(\sigma,I)=\big\<\big\{x_\alpha(\xi)\colon \alpha\in\Phi,\, \xi\in I\cap\sigma_\alpha\big\}\cup\big\{t_{\alpha}^{\zeta,\eta}(\xi)\colon \alpha\in\Delta,\;\xi\in I,\,\zeta,\eta\in R\big\}\big\>^{\hat{E}(\sigma)}\tp
$$

\begin{prop}\label{relativegenerators} Let $\sigma$ be a net of ideals, and let $I\unlhd R$ be an ideal. Then the following equality holds true\textup:
	\begin{align*} %$$
		\hat{E}(\sigma,I)&=\big\<\{\leftact{x_{-\alpha}(\zeta)}x_\alpha(\xi)\,:\, \alpha\in\Phi,\, \xi\in I\cap\sigma_\alpha,\,\zeta\in \sigma_{-\alpha}\big\}
		\\
		&\cup\{t_{\alpha}^{\zeta,\eta}(\xi)\,:\, \alpha\in\Delta,\;\xi\in I,\,\zeta,\eta\in R\}\>\tp
	\end{align*}
\end{prop}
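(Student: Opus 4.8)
The plan is to reduce the statement to a normality assertion and then prove that assertion by the Chevalley commutator calculus, the key tools being Lemmas~\ref{EqualIdeals}, \ref{ConjugeteTransvection} and~\ref{rank2} together with condition~$(*)$. Write $\tilde H$ for the group on the right-hand side and put $z_\alpha(\xi,\zeta)=\leftact{x_{-\alpha}(\zeta)}{x_\alpha(\xi)}$. One inclusion is immediate: each $t_\alpha^{\zeta,\eta}(\xi)$ with $\alpha\in\Delta$, $\xi\in I$, is a defining generator of $\hat E(\sigma,I)$, and $z_\alpha(\xi,\zeta)$ with $\xi\in I\cap\sigma_\alpha$, $\zeta\in\sigma_{-\alpha}$ is the conjugate of the defining generator $x_\alpha(\xi)$ by $x_{-\alpha}(\zeta)\in\hat E(\sigma)$, hence lies in the normal subgroup $\hat E(\sigma,I)$; so $\tilde H\le\hat E(\sigma,I)$. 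Conversely, taking $\zeta=0$ shows $\tilde H$ contains every defining generator of $\hat E(\sigma,I)$, so it is enough to prove that $\tilde H$ is normal in $\hat E(\sigma)$. Since the generating sets of $\tilde H$ and of $\hat E(\sigma)$ are closed under inversion (note $t_\alpha^{\zeta,\eta}(\xi)^{-1}=t_\alpha^{\zeta,\eta}(-\xi)$), I would establish this by verifying that $\leftact g w=[g,w]\,w$ lies in $\tilde H$ for every generator $g$ of $\hat E(\sigma)$---that is, $g=x_\gamma(\tau)$ with $\tau\in\sigma_\gamma$, or $g=t_\delta^{\mu,\nu}(\tau)$ with $\delta\in\Delta$---and every generator $w$ of $\tilde H$, that is, $w=z_\alpha(\xi,\zeta)$ with $\xi\in I\cap\sigma_\alpha$, $\zeta\in\sigma_{-\alpha}$, or $w=t_\beta^{\zeta,\eta}(\xi)$ with $\beta\in\Delta$, $\xi\in I$.

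Two bookkeeping facts drive the computation. First, by the net axioms and Lemma~\ref{EqualIdeals}, applying the Chevalley commutator formula to $x_{\gamma_1}(s_1)$ and $x_{\gamma_2}(s_2)$ with $s_1\in\sigma_{\gamma_1}$ and $s_2\in I\cap\sigma_{\gamma_2}$ produces only root elements $x_\delta(\theta)$ with $\theta\in I\cap\sigma_\delta$, each of which is $z_\delta(\theta,0)\in\tilde H$. Second, every generator of $\tilde H$ lies in $G(\Phi,R,I)$ (because $t_\beta^{\zeta,\eta}(0)=e$ and $\rho_I(z_\alpha(\xi,\zeta))=e$), which pins down the triangular normal forms that occur. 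If the roots supporting $g$ and $w$ are ``far'' (no positive integral combination in the pertinent rank-$\le 3$ subsystem is a root) then $g$ and $w$ commute. If $w=z_\alpha(\xi,\zeta)$, which is an honest product of the root elements $x_{\pm\alpha}$, and $g=x_\gamma(\tau)$ with $\gamma\ne\pm\alpha$, I would write $\leftact{x_\gamma(\tau)}{z_\alpha(\xi,\zeta)}=\leftact{A}{\big(\leftact{x_\gamma(\tau)}{x_\alpha(\xi)}\big)}$ with $A=\leftact{x_\gamma(\tau)}{x_{-\alpha}(\zeta)}$: the first bookkeeping fact puts $\leftact{x_\gamma(\tau)}{x_\alpha(\xi)}$ into $\tilde H$ as a product of generators with parameters in $I\cap\sigma$, and conjugating such a product by the root elements making up $A$ keeps it in $\tilde H$, the only same-root collision being at $\alpha$, where $\leftact{x_{-\alpha}(\cdot)}{x_\alpha(\cdot)}$ is again a $z$-generator with parameter in $\sigma_{-\alpha}$. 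For $\gamma=-\alpha$ one gets $z_\alpha(\xi,\zeta+\tau)$ directly, and for $\gamma=\alpha\in\Delta$ one gets $t_\alpha^{1+\tau\zeta,\zeta}(\xi)$, a $t$-generator; the case $\gamma=\alpha\notin\Delta$ is deferred to the end.

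The genuinely new ingredient concerns the transvection generators $t_\beta^{\zeta,\eta}(\xi)$, which in general are \emph{not} bounded products of root elements (Cohn's phenomenon, \cite{CohnGL2}), so the Chevalley commutator formula cannot be applied to them directly. When $g$ is supported on $\{\beta,-\beta\}$, i.e.\ $g=x_{\pm\beta}(\tau)$ or $g=t_\beta^{\mu,\nu}(\tau)$, Lemma~\ref{ConjugeteTransvection} gives $\leftact g{t_\beta^{\zeta,\eta}(\xi)}=t_\beta^{\zeta_1,\eta_1}(\xi)$ for a suitable new pair $(\zeta_1,\eta_1)$, still a $t$-generator of $\tilde H$ since $\beta\in\Delta$, $\xi\in I$; the same lemma, applied to $x_\beta(\xi)=t_\beta^{1,0}(\xi)$ and to $z_\alpha(\xi,\zeta)=t_\alpha^{1,\zeta}(\xi)$, handles $g=t_\delta^{\mu,\nu}(\tau)$ with $\delta=\pm\beta$ or $\delta=\pm\alpha$. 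When $g=x_\gamma(\tau)$ with $\gamma\ne\pm\beta$ (and likewise $g=t_\delta^{\mu,\nu}(\tau)$ with $\delta\ne\pm\beta$, after writing $t_\delta^{\mu,\nu}(\tau)$ as a product of $x_{\pm\delta}$'s over a faithfully flat extension), I would write $[x_\gamma(\tau),t_\beta^{\zeta,\eta}(\xi)]=x_\gamma(\tau)\cdot\leftact{t_\beta^{\zeta,\eta}(\xi)}{x_\gamma(-\tau)}$, note that $\leftact{t_\beta^{\zeta,\eta}(\xi)}{x_\gamma(-\tau)}$ is the conjugate of a root element by an element of $G(\{\beta,-\beta\},R)$, pass to $G(\Phi,R[t])$ with $x_\gamma(-\tau)$ replaced by $x_\gamma(-t\tau)$, and invoke Lemma~\ref{rank2} with $\alpha=\beta$ to see that it equals $\prod_{j>0}x_{i\beta+j\gamma}(t^j\theta_{ij})$; specialising $t\mapsto1$ gives $\leftact{t_\beta^{\zeta,\eta}(\xi)}{x_\gamma(-\tau)}=\prod_{j>0}x_{i\beta+j\gamma}(\theta_{ij})$. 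Reducing modulo $I$ and using $\rho_I(t_\beta^{\zeta,\eta}(\xi))=e$ together with uniqueness of the triangular form forces $\theta_{ij}\in I$ for $(i,j)\ne(0,1)$ and $\theta_{0,1}+\tau\in I$, while $\theta_{ij}\in\sigma_{i\beta+j\gamma}$ follows from $\tau\in\sigma_\gamma$, $\sigma_\beta=R$ and the net axioms along the $\beta$-string through $\gamma$; collecting the $(0,1)$-term with the leading $x_\gamma(\tau)$ and reordering the remaining factors then gives $[x_\gamma(\tau),t_\beta^{\zeta,\eta}(\xi)]\in\tilde H$. Conjugation of $z_\alpha(\xi,\zeta)$ by $t_\delta^{\mu,\nu}(\tau)$ with $\delta\ne\pm\alpha$ is handled in the same spirit, applying Lemma~\ref{rank2} to the factors $x_{\pm\alpha}$ of $z_\alpha(\xi,\zeta)$.

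The remaining case, which I expect to be the main obstacle, is $g=x_\rho(\tau)$, $w=z_\rho(\xi,\zeta)$ with $\rho\notin\Delta$: the naive computation gives $t_\rho^{1+\tau\zeta,\zeta}(\xi)$, not one of the listed generators precisely because $\rho\notin\Delta$, and avoiding the resulting circularity is the crux---this is exactly where hypothesis~$(*)$ is needed. Choosing $\delta\in\Delta$ with $\delta+\rho\in\Phi$ and $c_{\delta,\rho}$ invertible, the Chevalley commutator formula in $\langle\delta,\rho\rangle$ lets one express $x_\rho(\tau)$ as a word in the $\Delta$-root element $x_{-\delta}(\cdot)$ and in root elements $x_\mu(\cdot)$ at roots $\mu$ of $\langle\delta,\rho\rangle$ with $\mu\ne\pm\rho$ (if $\langle\delta,\rho\rangle$ is not of type $A_2$ a few further such $x_\mu$ appear, still none at $\pm\rho$). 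Conjugation by $x_\rho(\tau)$ thereby becomes a composition of conjugations of the types already treated: by a $\Delta$-root element, or by a root element at a root $\ne\pm\rho$ (where, for the $z$-type factors, one uses again that $z_\rho(\xi,\zeta)$ is a product of $x_{\pm\rho}$ and that all auxiliary root elements produced have parameters in $I\cap\sigma$). The dangerous same-root collision at $\rho$ has been removed; the only same-root collisions surviving along the way occur at $\Delta$-roots, where Lemma~\ref{ConjugeteTransvection} again produces legitimate $t$-generators of $\tilde H$. Carrying this out carefully disposes of all cases, so $\tilde H$ is normal in $\hat E(\sigma)$, and hence $\hat E(\sigma,I)=\tilde H$.
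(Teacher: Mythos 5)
Your proposal is correct and follows essentially the same route as the paper: both reduce the equality to normality of the right-hand side in $\hat{E}(\sigma)$ and verify it generator by generator, using Lemma~\ref{ConjugeteTransvection} for collisions inside $\{\pm\alpha\}$, Lemma~\ref{rank2} with an auxiliary polynomial variable for conjugation of transvections by elements at other roots, Vaserstein-style Chevalley commutator bookkeeping, and condition~$(*)$ together with Lemma~\ref{EqualIdeals} for the crucial same-root collision at a root outside $\Delta$. The only deviations are organizational and harmless: in that last case you decompose the conjugating element $x_\rho(\tau)$ rather than the conjugated $x_\rho(\xi)$ as the paper does, and you obtain the $I$-membership of the produced coefficients by reduction modulo $I$ plus uniqueness of the unipotent factorization, where the paper instead works over $\Z[t,\xi,\zeta,\eta]$ and uses divisibility by $\xi$.
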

\begin{proof}
	We denote by $H$ the right hand side of the equality in question. Clearly, $H\le \hat{E}(\sigma,I)$. In order to prove the converse inclusion, it is enough to prove that $H$ is a normal subgroup of $\hat{E}(\sigma)$, i.e. that generators of $H$ after conjugation by generators of $\hat{E}(\sigma)$ remain in $H$.
	
	\begin{enumerate}
		
		\item $\leftact{x_{\gamma}(t)}{t_{\alpha}^{\zeta,\eta}(\xi)}\in H$, where $t\in \sigma_{\gamma}$, $\alpha\in\Delta$, $\xi\in I$, $\zeta,\eta\in R$.
		\begin{enumerate}
			\item $\gamma=\pm\alpha$. Follows from Lemma~\ref{ConjugeteTransvection}.
			
			\item $\gamma\ne\pm\alpha$. If $i\alpha+j\gamma\in\Phi$ and $j>0$, then $t^j\in \sigma_{i\alpha+j\gamma}$. Hence it is enough to show that
			$$
			[x_\gamma(t),t_{\alpha}^{\zeta,\eta}(\xi)]=\prod_{i\alpha+j\gamma\in\Phi,\,j>0} x_{i\alpha+j\gamma}(t^j\xi\theta_{i,j}),
			$$ 
			where $\theta_{i,j}\in R$. In addition, it is enough to prove the equality above in case where $R=\Z[t,\xi,\zeta,\eta]$ is the ring of polynomials. It follows from Lemma~\ref{rank2} that 
			$$
			[x_\gamma(t),t_{\alpha}^{\zeta,\eta}(\xi)]=\prod_{i\alpha+j\gamma\in\Phi,\,j>0} x_{i\alpha+j\gamma}(t^j\tilde{\theta_{i,j}})\tp
			$$ 
			In addition, this commutator belongs to the subgroup $G(\Phi,R,(\xi))$; hence all the elements $t^j\tilde{\theta_{i,j}}$ (hence, in fact, the elements $\tilde{\theta_{i,j}}$) are divisible by $\xi$.
		\end{enumerate}
		
		\smallskip
		
		\item $\leftact{x_{\gamma}(t)x_{-\alpha}(\zeta)}{x_{\alpha}(\xi)}\in H$, where $t\in \sigma_{\gamma}$, $\zeta\in \sigma_{-\alpha}$, $\xi\in I\cap\sigma_{\alpha}$.
		
		\begin{enumerate}
			\item $\gamma=-\alpha$. This case is trivial.
			
			\item $\gamma\ne \pm\alpha$. Follows from the Chevalley commutator formula in the same way as in~\cite{VasersteinChevalley}.
			
			\item $\gamma=\alpha$. For $\alpha\in\Delta$ we obtain a special case of what we prove in the first item. Let $\alpha\notin\Delta$. By $(*)$ there exists a root $\beta\in\Delta$ such that $\alpha-\beta\in \Phi$, and the structure constant $c=c_{\beta,\alpha-\beta}=\pm c_{-\beta,\alpha}$ is invertible. We express $x_{\alpha}(\xi)$ from the Chevalley commutator formula for the commutator  $[x_{\alpha-\beta}(\xi),x_{\beta}(\pm c^{-1})]$ (by Lemma~\ref{EqualIdeals} we have $\sigma_{\alpha-\beta}=\sigma_{\alpha}$). Further we perform as in~\cite{VasersteinChevalley}. 
		\end{enumerate}
		
		\smallskip
		
		\item $\leftact{t_{\gamma}^{r,s}(t)x_{-\alpha}(\zeta)}{x_{\alpha}(\xi)}\in H$, where $\gamma\in\Delta$, $t,r,s\in R$, $\zeta\in \sigma_{-\alpha}$, $\xi\in I\cap\sigma_{\alpha}$.
		\begin{enumerate}
			\item $\gamma=\pm\alpha$. Follows from Lemma~\ref{ConjugeteTransvection}.
			
			\item $\gamma\ne\pm\alpha$. Then $\leftact{t_{\gamma}^{r,s}(t)x_{-\alpha}(\zeta)}{x_{\alpha}(\xi)}=\leftact{g}{h}$, where $g=\leftact{t_{\gamma}^{r,s}(t)}{x_{-\alpha}(\zeta)}$ and $h=\leftact{t_{\gamma}^{r,s}(t)}{x_{\alpha}(\xi)}$. In addition, by Lemma~\ref{rank2} we have
			\begin{align*}
				g=[t_{\gamma}^{r,s}(t),x_{-\alpha}(\zeta)]\cdot x_{-\alpha}(\zeta)=\prod_{-i\alpha+j\gamma\in\Phi,\,i>0} x_{-i\alpha+j\gamma}(\zeta^i\theta_{i,j})\cdot x_{-\alpha}(\zeta),
				\\
				h=[t_{\gamma}^{r,s}(t),x_{\xi}(\zeta)]\cdot x_{\alpha}(\xi)=\prod_{i\alpha+j\gamma\in\Phi,\,i>0} x_{i\alpha+j\gamma}(\xi^i\theta'_{i,j})\cdot x_{-\alpha}(\zeta)\in H.
			\end{align*}
			It remains to notice that the factors that we decompose the element $g$ into, normalise the subgroup $H$ because of what we prove pre\-vi\-ously.
		\end{enumerate}
		
		\smallskip
		
		\item $\leftact{t_{\gamma}^{r,s}(t)}{t_{\alpha}^{\zeta,\eta}(\xi)}\in H$, where $\alpha,\gamma\in\Delta$, $\xi\in I$, $r,s,t,\zeta,\eta\in R$.
		\begin{enumerate}
			\item $\gamma=\pm\alpha$. Follows from Lemma~\ref{ConjugeteTransvection}.
			
			\item $\gamma\ne\pm\alpha$. If the connected component of the subsystem $\Delta$ containing the root $\gamma$ is not of type $A_1$, then by Lemma~\ref{notA1} the element $t_{\gamma}^{r,s}(t)$ is contained in $E(\Delta,R)$; hence by what we prove pre\-vi\-ously it normalises the subgroup $H$. Otherwise, the roots $\alpha$ and $\gamma$ are in different components; hence $\leftact{t_{\gamma}^{r,s}(t)}{t_{\alpha}^{\zeta,\eta}(\xi)}=t_{\alpha}^{\zeta,\eta}(\xi)\in H$.
		\end{enumerate}
	\end{enumerate}
\end{proof}

\begin{prop}\label{withoutA1} In the previous notation\textup, assume that the subsystem $\Delta$ has no irreducible components of type $A_1$. Then
	\begin{align*} %$$
		\hat{E}(\sigma,I)&=\big\<\{x_\alpha(\xi)\,:\, \alpha\in\Phi,\, \xi\in I\cap\sigma_\alpha\}\big\>^{\hat{E}(\sigma)}
		\\
		&=\big\<\{\leftact{x_{-\alpha}(\zeta)}x_\alpha(\xi)\,:\, \alpha\in\Phi,\, \xi\in I\cap\sigma_\alpha,\,\zeta\in \sigma_{-\alpha}\}\big\>\tp
	\end{align*} %$$
	
	In this case we will denote this group just by $E(\sigma,I)$.
\end{prop}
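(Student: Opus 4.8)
The plan is to reduce, by means of Proposition~\ref{relativegenerators}, the whole statement to the single assertion that each generator $t_{\alpha}^{\zeta,\eta}(\xi)$ with $\alpha\in\Delta$, $\xi\in I$, $\zeta,\eta\in R$ already lies in the subgroup
$$
E_1:=\big\<\{\leftact{x_{-\beta}(\zeta')}{x_\beta(\xi')}\,:\,\beta\in\Phi,\ \xi'\in I\cap\sigma_\beta,\ \zeta'\in\sigma_{-\beta}\}\big\>\tp
$$
Write also $E_0:=\big\<\{x_\beta(\xi')\,:\,\beta\in\Phi,\ \xi'\in I\cap\sigma_\beta\}\big\>^{\hat{E}(\sigma)}$. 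Since $x_{-\beta}(\zeta')\in\hat{E}(\sigma)$ whenever $\zeta'\in\sigma_{-\beta}$, one has $E_1\le E_0$, and trivially $E_0\le\hat{E}(\sigma,I)$; on the other hand Proposition~\ref{relativegenerators} presents $\hat{E}(\sigma,I)$ as the subgroup generated by the defining generators of $E_1$ together with the elements $t_{\alpha}^{\zeta,\eta}(\xi)$. Hence, once all such $t_{\alpha}^{\zeta,\eta}(\xi)$ are shown to lie in $E_1$, we obtain $\hat{E}(\sigma,I)=E_1$, and then the chain $E_1\le E_0\le\hat{E}(\sigma,I)=E_1$ forces $\hat{E}(\sigma,I)=E_0=E_1$, as desired.

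So fix $\alpha\in\Delta$, $\xi\in I$, $\zeta,\eta\in R$, and let $\Delta_0$ be the irreducible component of $\Delta$ containing $\alpha$; by hypothesis $\Delta_0\ne A_1$, hence $\Delta_0$ is irreducible of rank at least~$2$. The key point is a relative refinement of Lemma~\ref{notA1}, namely that $t_{\alpha}^{\zeta,\eta}(\xi)$ lies in the relative elementary subgroup $E(\Delta_0,R,\xi R)$ of the Chevalley group $G(\Delta_0,R)$. To get this I would use the method of generic elements. Over the polynomial ring $R_0=\Z[\xi,\zeta,\eta]$ (now $\xi,\zeta,\eta$ being independent variables), Lemma~\ref{notA1} gives a decomposition $t_{\alpha}^{\zeta,\eta}(\xi)=\prod_{k=1}^{m}x_{\beta_k}(f_k)$ in $G(\Delta_0,R_0)$ with $\beta_k\in\Delta_0$ and $f_k\in R_0$. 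Let $\pi$ be the ring endomorphism of $R_0$ sending $\xi$ to $0$ and fixing $\zeta,\eta$; by functoriality $\pi_*\big(t_{\alpha}^{\zeta,\eta}(\xi)\big)=t_{\alpha}^{\zeta,\eta}(0)=e$, so that $\prod_{k=1}^{m}x_{\beta_k}(\pi(f_k))=e$ in $G(\Delta_0,R_0)$. A routine telescoping of the identity $t_{\alpha}^{\zeta,\eta}(\xi)=\big(\prod_k x_{\beta_k}(f_k)\big)\big(\prod_k x_{\beta_k}(\pi(f_k))\big)^{-1}$ then rewrites it as $t_{\alpha}^{\zeta,\eta}(\xi)=\prod_{k=1}^{m}\leftact{P_{k-1}}{x_{\beta_k}(f_k-\pi(f_k))}$, where $P_{k-1}=\prod_{l<k}x_{\beta_l}(f_l)\in E(\Delta_0,R_0)$ and every argument $f_k-\pi(f_k)$ lies in $\xi R_0$. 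Specialising along the $\Z$-algebra homomorphism $R_0\to R$ that carries the three variables to the given $\xi,\zeta,\eta$ now exhibits $t_{\alpha}^{\zeta,\eta}(\xi)$ as a product of $E(\Delta_0,R)$-conjugates of elements $x_{\beta_k}(d_k)$ with $\beta_k\in\Delta_0$ and $d_k\in\xi R$, i.e.\ $t_{\alpha}^{\zeta,\eta}(\xi)\in E(\Delta_0,R,\xi R)$.

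It remains to invoke Vaserstein's description of relative elementary subgroups of Chevalley groups (\cite{VasersteinChevalley}): since $\Delta_0$ is irreducible of rank at least~$2$, the group $E(\Delta_0,R,\xi R)$ is already generated by the elements $\leftact{x_{-\beta}(\zeta')}{x_\beta(d)}$ with $\beta\in\Delta_0$, $\zeta'\in R$ and $d\in\xi R$. Because $\Delta_0\sub\Delta$ we have $\sigma_{\beta}=\sigma_{-\beta}=R$, while $\xi R\sub I$ since $\xi\in I$; hence each of these elements is one of the defining generators of $E_1$, and therefore $t_{\alpha}^{\zeta,\eta}(\xi)\in E_1$, which finishes the proof. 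The part I expect to require the most care is the passage to the generic element together with the telescoping --- specifically the point that using the retraction $\pi$ (as opposed to merely reducing modulo $\xi R_0$) turns $t_{\alpha}^{\zeta,\eta}(0)=e$ into an honest identity in $G(\Delta_0,R_0)$, which is what guarantees that the telescoped factors have arguments in $\xi R_0$; everything else is Proposition~\ref{relativegenerators}, Lemma~\ref{notA1}, and the cited structure theorem for relative elementary subgroups.
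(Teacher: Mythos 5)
Your proposal is correct and takes essentially the same route as the paper: reduce via Proposition~\ref{relativegenerators} to the single family of generators $t_{\alpha}^{\zeta,\eta}(\xi)$, pass to the generic polynomial ring $\Z[\xi,\zeta,\eta]$ with $I=(\xi)$, use Lemma~\ref{notA1} together with the splitting of the polynomial ring along $\xi\mapsto 0$ to place the element in the relative elementary subgroup of level $(\xi)$, and finish with Vaserstein's description of its generators. The only difference is cosmetic: where the paper invokes Lemma~3.2 of \cite{StepUniloc} for the equality $G(\Delta,R,I)\cap E(\Delta,R)=E(\Delta,R,I)$ in the split situation, you reprove the needed containment by the explicit retraction-and-telescoping argument, which is exactly the standard proof of that cited lemma.
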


\begin{proof}
	By Proposition~\ref{relativegenerators} it is enough to prove that an element $t_{\alpha}^{\zeta,\eta}(\xi)$, where $\alpha\in\Delta$, $\xi\in I$, $\zeta,\eta\in R$, can be expressed as a product of elements $\leftact{x_{-\alpha}(\zeta_i)}x_\alpha(\xi_i)$, where $\alpha\in\Delta$, $\xi_i\in I$, $\zeta_i\in R$.
	
	In addition, it is enough to prove that in case where $\xi$,$\zeta$ and $\eta$ are free variable in the polynomial ring $R=\Z[\xi,\eta,\zeta]$, and $I=(\xi)$. It is clear that $t_{\alpha}^{\zeta,\eta}(\xi)\in G(\Delta,R,I)$. Further since $\Delta$ has no irreducible components of type $A_1$, it follows by Lemma~\ref{notA1} that $t_{\alpha}^{\zeta,\eta}(\xi)\in E(\Delta,R)$. Note that $R=R_1\oplus I$, where $R_1$ is a subring in $R$, which implies that $G(\Delta,R,I)\cap E(\Delta,R)=E(\Delta,R,I)$ (see Lemma~3.2 in~\cite{StepUniloc}). Therefore, $t_{\alpha}^{\zeta,\eta}(\xi)\in E(\Delta,R,I)$, and by~\cite{VasersteinChevalley}, in can be expressed in the desired way.
\end{proof}

\section{The proof of Theorem \ref{normal}}

We adapt for our context the idea of the proof given in~\cite{Taddei} of the theorem on normality of the elementary subgroup in Chevalley group.

If $\sigma$ is a net of ideals in the ring $R$, then we denote by $\sigma[t]$ the corresponding net in the polynomial ring $R[t]$; i.e the ideal ${\sigma[t]_\alpha\unlhd R[t]}$ is generated by the set $\sigma_{\alpha}$. 

\begin{lem}\label{denominators} Let $\sigma$ be a net of ideals in the ring $R,$ let $s\in R$ be a non nilpotent element\textup, and let $h(t)\in \hat{E}(F_s(\sigma[t]),(t))\le G(\Phi,R_s[t],(t))$. Then for a big enough $N\in\N$ there exists $g(t)\in \hat{E}(\sigma[t],(t))\le G(\Phi,R[t])$ such that $F_s(g(t))=h(s^N t)$.
\end{lem}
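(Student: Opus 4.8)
The plan is to reduce to a purely "denominator-bookkeeping" argument about the generators of $\hat{E}(F_s(\sigma[t]),(t))$, exactly in the spirit of the standard localisation lemmas (cf.\ the analogous step in \cite{Taddei, StepUniloc, HazVav}). By Proposition~\ref{relativegenerators} applied over the ring $R_s[t]$ with the ideal $(t)$, the element $h(t)$ is a product of finitely many generators of the two types
$$
\leftact{x_{-\alpha}(\zeta)}{x_\alpha(\xi)},\qquad \alpha\in\Phi,\ \xi\in (t)\cap F_s(\sigma[t])_\alpha,\ \zeta\in F_s(\sigma[t])_{-\alpha},
$$
and
$$
t_\alpha^{\zeta,\eta}(\xi),\qquad \alpha\in\Delta,\ \xi\in (t),\ \zeta,\eta\in R_s[t].
$$
Since $h(t)$ lies in the principal congruence subgroup of level $(t)$, every $\xi$ occurring may be taken in $(t)$; write $\xi=t\xi'$. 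All the coefficients $\xi',\zeta,\eta$ are polynomials in $t$ whose coefficients lie in $R_s=R[1/s]$, and the $\xi'$'s lie in the appropriate ideals of $R_s$ generated by $\sigma_\alpha$. Because only finitely many such polynomials appear, there is a single $N_0\in\N$ large enough that $s^{N_0}$ times each of them has coefficients in $R$; moreover the $\xi'$-coefficients, after this clearing, lie in $\sigma_\alpha$ itself (not merely in $F_s(\sigma_\alpha)$), since $\sigma_\alpha[t]$ is by definition generated by $\sigma_\alpha$ and we are multiplying by an element of $R$.

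The substitution $t\mapsto s^{N}t$ is the key device. First I would observe the scaling identities: under $t\mapsto s^N t$ a root element $x_\alpha(t^k c)$ becomes $x_\alpha(s^{Nk}t^k c)$, and similarly $t_\alpha^{\zeta(t),\eta(t)}(t\xi'(t))$ becomes $t_\alpha^{\zeta(s^Nt),\eta(s^Nt)}(s^N t\,\xi'(s^Nt))$. Thus after substituting $t\mapsto s^N t$ with $N$ large, every generator of $h(s^Nt)$ acquires enough powers of $s$ on the "small" coefficient $\xi$ (and, in the transvection case, the factor $s^N$ multiplying $\xi$ directly, using the explicit matrix form $t_\alpha^{\zeta,\eta}(\xi)=\varphi_\alpha\bigl(e+\binom{\zeta}{\eta}\xi\bigl(-\eta\ \ \zeta\bigr)\bigr)$) that the corresponding element is defined over $R[t]$ and is in fact a generator — of the same type — of $\hat{E}(\sigma[t],(t))$. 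Concretely, for the conjugated root generators one uses that $\leftact{x_{-\alpha}(\zeta)}{x_\alpha(t\xi')}$ scales to $\leftact{x_{-\alpha}(s^N\zeta')}{x_\alpha(s^{N}t\,\xi'')}$ with $s^N\zeta'\in\sigma_{-\alpha}$ and $s^N\xi''\in\sigma_\alpha$ once $N\ge N_0$ (possibly after absorbing extra powers to kill denominators in $\zeta$ and in the coefficients of $\xi'$ as polynomials in $t$); for the transvections one only needs $s^N\xi\in(t)\subseteq R[t]$ together with $\zeta,\eta\in R[t]$, which holds after clearing denominators in $\zeta,\eta$. Collecting the finitely many exponents that arise and taking $N$ to be their maximum, define $g(t)$ to be the product of the resulting $R[t]$-generators in the same order; then $g(t)\in\hat{E}(\sigma[t],(t))$ and $F_s(g(t))=h(s^Nt)$ by construction, since $F_s$ is a ring homomorphism and commutes with the substitution and with forming the product.

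The main obstacle is the transvection generators $t_\alpha^{\zeta,\eta}(\xi)$: unlike ordinary root elements they are not multiplicative in $\xi$, so one cannot simply split off powers of $s$ by the addition rule $x_\alpha(\xi_1)x_\alpha(\xi_2)=x_\alpha(\xi_1+\xi_2)$. This is precisely the point flagged in Remark~3 after the definition of $\hat{E}(\sigma)$ — it is why one wants \emph{all} the $t_\alpha^{\zeta,\eta}(\xi)$ among the generators rather than only images of the normal closure of $E(2,R)$ in $\SL(2,R)$. The resolution is that the transvection is \emph{linear} in its argument through the matrix formula above, so that $t\mapsto s^Nt$ multiplies $\xi$ by $s^N$ directly and hence places the transvection back among the generators of $\hat{E}(\sigma[t],(t))$ without needing any decomposition; the only care required is that $\zeta,\eta$ (which are polynomials in $t$ over $R_s$) have their denominators cleared, which costs a bounded further number of powers of $s$ absorbed into $N$. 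Once $N$ is chosen uniformly large to handle all of these finitely many generators simultaneously, the construction of $g(t)$ is immediate.
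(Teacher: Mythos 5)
Your first step (reducing to the generators of Proposition~\ref{relativegenerators} over $R_s[t]$) matches the paper, but the core difficulty of the lemma is not resolved by your argument. The substitution $t\mapsto s^Nt$ multiplies the coefficient of $t^k$ by $s^{Nk}$ and therefore \emph{never touches the constant terms} of the parameters $\zeta(t),\eta(t)$ of a transvection generator, nor the constant term of the conjugating parameter $\zeta(t)$ in a generator $\leftact{x_{-\alpha}(\zeta(t))}x_\alpha(t\xi(t))$. So the denominators sitting in those constant terms cannot be removed by ``absorbing a bounded further number of powers of $s$ into $N$'', and you are not free to simply replace $\zeta,\eta$ by $s^m\zeta,s^m\eta$, because $t_\alpha^{s^m\zeta,s^m\eta}(\xi)=t_\alpha^{\zeta,\eta}(s^{2m}\xi)$ is a \emph{different} element. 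For the transvections the correct device is exactly this quadratic rescaling identity used in reverse: the argument $t\xi(t)$ does acquire a factor $s^N$ under the substitution, and one transfers the surplus onto the vector $(\zeta,\eta)$, e.g.\ with $n$ bounding all denominators,
$$
t_\alpha^{\zeta(s^{3n}t),\,\eta(s^{3n}t)}\bigl(s^{3n}t\,\xi(s^{3n}t)\bigr)
=t_\alpha^{s^n\zeta(s^{3n}t),\,s^n\eta(s^{3n}t)}\bigl(s^{n}t\,\xi(s^{3n}t)\bigr)\in F_s\bigl(\hat{E}(\sigma[t],(t))\bigr)\tp
$$
Linearity of the transvection in $\xi$, which is all you invoke, does not produce this transfer; without it your treatment of the transvection generators fails.

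The second family of generators is where the real work lies, and your proposal skips it. For $\alpha\in\Phi\sm\Delta$ the conjugator $x_{-\alpha}(\zeta(t))$ keeps its denominator under $t\mapsto s^Nt$, and there is no identity moving powers of $s$ from the argument of $x_\alpha$ onto the conjugator; hence the claim that the rescaled element is again ``a generator of the same type'' over $R[t]$ is unjustified (and is false as stated). This is precisely the point where condition~$(*)$ must be used: one picks $\beta\in\Delta$ with $\gamma=\alpha-\beta\in\Phi$ and invertible structure constant, expresses $x_\alpha(\cdot)$ from the Chevalley commutator $[x_\gamma(s^{?}t\xi(\cdot)),x_\beta(\pm c^{-1}s^n)]$ so as to distribute powers of $s$ over both entries, and then checks — again by the Chevalley commutator formula — that conjugating each factor and each correction term by $x_{-\alpha}(\zeta(\cdot))$ yields root elements whose arguments carry enough positive powers of $s$ to swallow the denominators coming from powers of $\zeta$. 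Your proof never mentions $(*)$ or any such decomposition, so this case is a genuine gap, not a bookkeeping detail; note it is also the step singled out in the paper as the reason the full set of transvections $t_\alpha^{\zeta,\eta}(\xi)$ (with $\zeta,\eta$ arbitrary in $R_s[t]$, hence with unavoidable denominators) must be handled by the rescaling identity rather than by clearing denominators.
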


\begin{proof}
	It is enough to prove the statement in question for the generators of the group $\hat{E}(F_s(\sigma[t]), (t))$ given in Proposition~\ref{relativegenerators}.
	
	\begin{enumerate}
		\item $h(t)=t_\alpha^{\zeta(t),\eta(t)}(t\xi(t))$, $\alpha\in\Delta, \zeta(t),\eta(t),\xi(t)\in R_s[t]$.
		
		\noindent Let the degree of $s$ on the denominators of $\zeta(t),\eta(t)$ and $\xi(t)$ be not greater than~$n$. Then we have
		\begin{align*} %$$
			h(s^{3n}t)&=t_\alpha^{\zeta(s^{3n}t),\eta(s^{3n}t)}(s^{3n}t\xi(s^{3n}t))
			\\
			&=t_\alpha^{s^n\zeta(s^{3n}t),s^n\eta(s^{3n}t)}(s^{n}t\xi(s^{3n}t))\in F_s(\hat{E}(\sigma[t],(t)))\tp
		\end{align*} %$$
		
		\smallskip
		
		\item $h(t)=\leftact{x_{-\alpha}(\zeta(t))}x_\alpha(t\xi(t))$, $\alpha\in\Phi$, $\xi(t)\in F_s(\sigma[t]_\alpha))$, $\zeta(t)\in F_s(\sigma[t]_{-\alpha})$. 
		
		Here we use that $(t)\cap F_s(\sigma[t]_\alpha)=(t)F_s(\sigma[t]_\alpha)$.
		
		We may assume that $\alpha\in\Phi\sm\Delta$ because otherwise such a generator would coincide with the one from the first case. 
		
		Then by $(*)$ there exists $\beta\in\Delta$ such that $\alpha-\beta=\gamma\in\Phi$ and the structure constant $c=c_{\alpha,-\beta}=\pm c_{\beta,\gamma}$ is invertible in $R$. 
		
		For simplicity we denote $t'=s^{2n}t$.
		
		We express $x_\alpha(t'\xi(t'))$ from the Chevalley commutator formula for the commutator $[x_\gamma(s^{n}t\xi(t')),x_\beta(\pm c^{-1}s^n)]$. If $n$ is big enough, then the remaining terms in the formula after conjugation by the element $x_{\!-\!\alpha}(\!\zeta\!(t')\!),$ will be in $F_s(\hat{E}(\sigma[t],(t)))$, which is easy to see again from the Chevalley commutator formula. Similarly, we obtain that
		$$
		\leftact{x_{-\alpha}(\zeta(t'))}x_\gamma(s^{n}t\xi(t'))\in F_s(\hat{E}(\sigma[t],(t))),
		$$
		and 
		$$
		\leftact{x_{-\alpha}(\zeta(t'))}x_\beta(\pm c^{-1}s^n)\in F_s(\hat{E}(\sigma[t])).
		$$
		Therefore,
		$$
		\leftact{x_{-\alpha}(\zeta(t'))}[x_\gamma(s^{n}t\xi(t')),x_\beta(\pm c^{-1}s^n)]\in F_s(\hat{E}(\sigma[t],(t))),
		$$
		and
		\begin{equation*} %$$
			\leftact{x_{-\alpha}(\zeta(t'))}x_\alpha(t'\xi(t'))\in F_s(\hat{E}(\sigma[t],(t))).
			\qedhere
		\end{equation*} %$$
	\end{enumerate}
\end{proof}

\begin{lem}\label{stothepowern} Let $\sigma$ be a net of ideals in the ring $R,$ let $s \in R$ be a non nilpotent element\textup, and let $g(t)\in G(\Phi,R[t],(t))$ be such that $F_s(g(t))\in \hat{E} (F_s(\sigma[t]),(t))$. Then we have $g(s^Nt)\!\in\! \hat{E}(\sigma[t],(t))$ for big enough $N\!\in\!\N$.
\end{lem}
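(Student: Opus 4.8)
The plan is to deduce the statement from Lemma~\ref{denominators} together with an elementary fact about the kernel of the localisation map $F_s$ on the principal congruence subgroup of level~$(t)$. Throughout, I will use that applying $F_s$ and substituting $t\mapsto s^k t$ are functorial operations on group points, hence commute, and that the substitution $t\mapsto s^k t$ is a ring endomorphism of $R[t]$ fixing $R$, hence preserves both the ideal $(t)$ and each ideal $\sigma[t]_\alpha$ (the latter being generated by $\sigma_\alpha\sub R$); consequently $G(\Phi,-)$ applied to it maps $\hat{E}(\sigma[t])$ into itself and $\hat{E}(\sigma[t],(t))$ into itself.

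First I would apply Lemma~\ref{denominators} to the element $h(t):=F_s(g(t))\in\hat{E}(F_s(\sigma[t]),(t))$, which by hypothesis is available: this yields, for a sufficiently large $M\in\N$, an element $g_1(t)\in\hat{E}(\sigma[t],(t))\le G(\Phi,R[t])$ with $F_s(g_1(t))=h(s^M t)=F_s\big(g(s^M t)\big)$. Set $x(t):=g_1(t)^{-1}\,g(s^M t)$. Since $g_1(t)\in G(\Phi,R[t],(t))$ and $g(s^M t)\in G(\Phi,R[t],(t))$ (substitution preserves the level-$(t)$ congruence subgroup), we have $x(t)\in G(\Phi,R[t],(t))$, and by construction $F_s(x(t))=e$. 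The key step will then be to show that $x(s^N t)=e$ for $N$ large; granting this, substituting $t\mapsto s^N t$ in $g(s^M t)=g_1(t)\,x(t)$ gives $g(s^{M+N}t)=g_1(s^N t)$, and $g_1(s^N t)\in\hat{E}(\sigma[t],(t))$ by the remark above, so the lemma holds with the exponent $M+N$.

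To prove $x(s^N t)=e$ I would use that the Chevalley--Demazure scheme $G(\Phi,-)$ is affine of finite type over $\Z$, so $\Z[G]$ is generated by finitely many elements $f_1,\dots,f_m$; writing $x(t)$ as the ring homomorphism $\Z[G]\to R[t]$ with $x(t)(f_i)=p_i(t)$, the condition $x(t)\in G(\Phi,R[t],(t))$ says $p_i(t)=a_i+t\,q_i(t)$ with $a_i$ the (image of the) augmentation value, and $F_s(x(t))=e$ says that $q_i(t)$ maps to $0$ in $R_s[t]$, i.e.\ every coefficient of every $q_i$ is annihilated by some power of $s$; choosing $N$ so that $s^N$ kills all these finitely many coefficients, one computes $p_i(s^N t)=a_i+s^N t\,q_i(s^N t)=a_i$, whence $x(s^N t)$ is the augmentation homomorphism, i.e.\ $e$. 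There is no serious obstacle: Lemma~\ref{denominators} carries the real weight, and the remaining input---that an element of $G(\Phi,R[t],(t))$ trivialising over $R_s[t]$ trivialises after rescaling $t$ by a power of $s$---is routine from finite generation of $\Z[G]$; the only points needing care are the compatibilities of $F_s$ and of the pair $(\hat{E}(\sigma[t]),\hat{E}(\sigma[t],(t)))$ with the substitution $t\mapsto s^N t$, both immediate from functoriality.
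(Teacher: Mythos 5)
Your proposal is correct and follows essentially the same route as the paper: apply Lemma~\ref{denominators} to $h(t)=F_s(g(t))$ to get $g_1(t)\in\hat{E}(\sigma[t],(t))$ with $F_s(g_1(t))=F_s(g(s^Mt))$, and then observe that the discrepancy, lying in $G(\Phi,R[t],(t))$ and killed by $F_s$, becomes trivial after the substitution $t\mapsto s^Nt$ for $N$ large. The paper dismisses this last point with ``it is easy to see''; your coordinate argument via finite generation of $\Z[G]$ and $s$-torsion of the coefficients is exactly the justification it has in mind.
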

\begin{proof}
	By Lemma~\ref{denominators} for big enough $n_1\in\N$ there exists $g_1(t)\in \hat{E}(\sigma[t],(t))$ such that $F_s(g(s^{n_1}t))=F_s(g_1(t))$. Then it is easy to see that for big enough $n_2$ we have $g(s^{n_1+n_2}t)=g_1(s^{n_2}t)\in \hat{E}(\sigma[t],(t))$.
\end{proof}

\begin{lem}\label{localnorm} Assume that the ring $R$ is local. Let $\sigma$ be a net of ideals in it. Then the subgroup $S(\sigma)\le G(\Phi,R)\le G(\Phi,R[t])$ normalises the subgroup
	$$
	\hat{E}(\sigma[t],(t))\le G(\Phi,R[t]).
	$$
\end{lem}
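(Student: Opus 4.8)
The plan is to reduce the statement to the description of $S(\sigma)$ for local rings provided by Theorem~\ref{local}, namely $S(\sigma)=T(\Phi,R)E(\sigma)\ovl{W}(\Phi,\sigma)$, and then check that each of the three types of generators of $S(\sigma)$ normalises the subgroup $\hat{E}(\sigma[t],(t))\le G(\Phi,R[t])$. Since these three subgroups generate $S(\sigma)$, it suffices to treat elements of $T(\Phi,R)$, the standard generators $x_\alpha(\xi)$ of $E(\sigma)$ (with $\xi\in\sigma_\alpha$), and the elements of $\ovl{W}(\Phi,\sigma)$ separately; conjugation by a product is handled by conjugating one factor at a time, so no genuine interaction between the three types is needed.

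First I would handle the torus and the Weyl group part together, since for both the relevant fact is that conjugation permutes the standard generators up to rescaling within the correct ideal. Conjugation of $x_\beta(\eta)$ by $h\in T(\Phi,R)$ gives $x_\beta(\eps\eta)$ with $\eps\in R^*$, so the set $\{x_\beta(\eta):\eta\in (t)\cap\sigma_\beta\}$ is preserved; conjugation of $t_\gamma^{\zeta,\eta}(\xi)$ by $h$ is again of the form $t_\gamma^{\zeta',\eta'}(\xi)$ with $\xi$ unchanged and $\gamma\in\Delta$, by a direct $\SL(2,R)$ computation (this is essentially Lemma~\ref{ConjugeteTransvection} together with the fact that $h$ acts on $X_{\pm\gamma}$ through $\ph_\gamma$). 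For $w\in\ovl{W}(\Phi,\sigma)$: conjugation sends $x_\beta(\eta)$ to $x_{w\beta}(\pm\eta)$, and since $w$ preserves the net we have $\eta\in(t)\cap\sigma_\beta=(t)\cap\sigma_{w\beta}$; moreover $w$ permutes $\Delta$ (as $W(\Phi,\sigma)$-invariance forces $\sigma_{w\alpha}=R$ for $\alpha\in\Delta$), so $w$ conjugates the $t_\gamma^{\zeta,\eta}(\xi)$-generators, $\gamma\in\Delta$, into generators $t_{w\gamma}^{\zeta_1,\eta_1}(\pm\xi)$ of the same shape, again using Lemma~\ref{ConjugeteTransvection} and that $\ph_\gamma$ and $\ph_{w\gamma}$ are related by conjugation by a lift of $w$. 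Hence both $T(\Phi,R)$ and $\ovl{W}(\Phi,\sigma)$ normalise the generating set of $\hat{E}(\sigma[t],(t))$, and therefore normalise the subgroup.

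The main work, and the main obstacle, is conjugation by the elementary generators $x_\alpha(\xi)$, $\xi\in\sigma_\alpha$, because here the Chevalley commutator formula produces genuinely new terms. I would use Proposition~\ref{relativegenerators}, which rewrites $\hat{E}(\sigma[t],(t))$ as the (non-normalised) subgroup generated by the elements $\leftact{x_{-\beta}(\zeta)}x_\beta(\eta)$ with $\eta\in(t)\cap\sigma_\beta[t]$, $\zeta\in\sigma_{-\beta}[t]$, together with the $t_\gamma^{\zeta,\eta}(\xi)$ with $\gamma\in\Delta$, $\xi\in(t)$. For the $\leftact{x_{-\beta}(\zeta)}x_\beta(\eta)$-generators, conjugation by $x_\alpha(\xi)$ is handled exactly as in the normality proof of the relative elementary subgroup (the argument of Vaserstein cited inside Proposition~\ref{relativegenerators}): one splits into the cases $\alpha=\pm\beta$ and $\alpha\ne\pm\beta$, in the latter case expanding via the Chevalley commutator formula, noting that every new root subgroup element has parameter lying in the appropriate product ideal $\sigma_\delta$ (using that $\sigma$ is a net) and still inside $(t)$, so it is again a generator of $\hat{E}(\sigma[t],(t))$; here condition~$(*)$ enters when $\beta\notin\Delta$, as in case (2c) of Proposition~\ref{relativegenerators}. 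For conjugation of $t_\gamma^{\zeta,\eta}(\xi)$ by $x_\alpha(\xi')$: if $\alpha=\pm\gamma$ use Lemma~\ref{ConjugeteTransvection}; if $\alpha\ne\pm\gamma$ use Lemma~\ref{rank2} exactly as in case (1b) of Proposition~\ref{relativegenerators} to write the commutator $[x_\alpha(\xi'),t_\gamma^{\zeta,\eta}(\xi)]$ as a product of $x_{i\gamma+j\alpha}(\ast)$ with the parameters divisible by $\xi\in(t)$ and lying in $\sigma_{i\gamma+j\alpha}$. In every subcase the conjugate of a generator is a product of generators of $\hat{E}(\sigma[t],(t))$, which gives the claim. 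The only subtlety worth flagging is that locality of $R$ is used solely through Theorem~\ref{local} to obtain the generating set of $S(\sigma)$; the per-generator computations are valid over an arbitrary ring.
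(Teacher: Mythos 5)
Your overall route is exactly the paper's: decompose $S(\sigma)=T(\Phi,R)E(\sigma)\ovl{W}(\Phi,\sigma)$ via Theorem~\ref{local} and check that each factor normalises $\hat{E}(\sigma[t],(t))$. One remark on economy: the middle factor needs no computation at all, since $E(\sigma)\le\hat{E}(\sigma)\le\hat{E}(\sigma[t])$ and $\hat{E}(\sigma[t],(t))$ is by its very definition a normal closure inside $\hat{E}(\sigma[t])$; your detour through Proposition~\ref{relativegenerators} and the Vaserstein-style case analysis is harmless but only re-derives what the definition (and the proof of that proposition) already give, and this is precisely all the paper says about this factor. (Also, torus conjugation rescales the transvection parameter by a unit rather than leaving $\xi$ unchanged, which is immaterial since $(t)$ is stable under unit scaling.)

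There is, however, one incorrect step in your treatment of the Weyl factor: from $\sigma_{w\alpha}=\sigma_{\alpha}=R$ you conclude that $w\alpha\in\Delta$, i.e.\ that every $w\in W(\Phi,\sigma)$ permutes $\Delta$. This inference is false in general: a net may have $\sigma_\beta=R$ for roots $\beta\notin\Delta$ (e.g.\ the unit net), and then $W(\Phi,\sigma)$ can move roots of $\Delta$ out of $\Delta$. If $\gamma$ lies in a component of $\Delta$ of rank at least two this does not matter, because by Lemma~\ref{notA1} and the relative argument of Proposition~\ref{withoutA1} the transvection generators at such $\gamma$ are redundant; but if $\gamma$ lies in an $A_1$-component and $w\gamma\notin\Delta$, then $\leftact{w}{t_{\gamma}^{\zeta,\eta}(\xi)}=t_{w\gamma}^{\zeta_1,\eta_1}(\pm\xi)$ is not among the listed generators, and you must argue separately that it lies in $\hat{E}(\sigma[t],(t))$: using condition $(*)$ choose $\beta\in\Delta$ with $\beta+w\gamma\in\Phi$, note (net axioms, Lemma~\ref{EqualIdeals}, and $w$-invariance of $\sigma$, which gives $\sigma_{\pm w\gamma}=R$) that all ideals of $\sigma$ on the rank-two subsystem spanned by $w\gamma$ and $\beta$ are equal to $R$, and then apply the decomposition of transvections (the references in Lemma~\ref{notA1}) together with the relative argument of Proposition~\ref{withoutA1} to write $t_{w\gamma}^{\zeta_1,\eta_1}(\pm\xi)$ as a product of elements $\leftact{x_{-\delta}(\zeta_i)}{x_{\delta}(\xi_i)}$ with $\xi_i\in(t)$, all of which lie in $\hat{E}(\sigma[t],(t))$. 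To be fair, the paper's own proof is a one-liner asserting that conjugation by the torus and by $\ovl{W}(\Phi,\sigma)$ preserves the generating set and is silent on this very point; but as you wrote it, the claim that $w$ permutes $\Delta$ is wrong, so the $A_1$-component case needs a patch along the lines above.
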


\begin{proof}
	By theorem~\ref{local}, $S(\sigma)=T(\Phi,R)E(\sigma)\ovl{W}(\Phi,\sigma)$. The second factor belongs to the subgroup $\hat{E}(\sigma[t],(t))$, and conjugation by the first and the third factors preserves the set of its generators.
\end{proof}

Now we prove Theorem~\ref{normal}. Let $h\in S(\sigma)$. Set 
$$
g(t)=\leftact{h}{x(t)}\in G(\Phi,R[t],(t)),
$$
where either $x(t)=x_\alpha(\xi t)$, where $\alpha\in\Phi$, $\xi\in\sigma_{\alpha}$, or $x(t)=t_\alpha^{\zeta,\eta}(\xi t)$, where $\alpha\in\Delta$, $\zeta,\eta,\xi\in R$. We must prove that $g(1)\in \hat{E}(\sigma)$, and to do so it is enough to prove that $g(t)\in \hat{E}(\sigma[t],(t))$.

Set $I=\{\theta\in R\colon g(\theta t)\in \hat{E}(\sigma[t],(t))\}$. This set is an ideal of the ring $R$. Indeed, $I+I\sub I$ because $g(t_1+t_2)=g(t_1)g(t_2)$; and $RI\sub I$ because the replacement of $t$ by $rt$, where $r\in R$, preserves the subgroup $\hat{E}(\sigma[t],(t))$. Therefore, we must prove that $1\in I$.

Assume the converse, then $I\sub\M$ for some maximal ideal $\M$. It follows from Lemma~\ref{localnorm} that $F_{\M}(g(t))\in \hat{E}(F_{\M}(\sigma[t]),(t))$. Then it is clear that $F_{s}(g(t))\in \hat{E}(F_{s}(\sigma[t]),(t))$ for some $s\in R\sm\M$. Therefore, by Lemma~\ref{stothepowern} we have $g(s^Nt)\in \hat{E}(\sigma[t],(t))$, i.e. $s^N\in I\sub\M$. But then we have $s\in\M$. This is a contradiction.

\section{The standard commutator formula}

In this Section we obtain a corollary from Theorem~\ref{normal}, which stands as an analog of the {\it standard commutator formula}. This is how the equality
$$
[G(\Phi,R,I), E(\Phi,R)]=E(\Phi,R,I)
$$
is called. This formula has many proofs. We adapt for our context the one given in~\cite{StepUniloc}.

\begin{cor}\label{StandComm} In the previous notation, assume that the subsystem $\Delta$ has no irreducible components of type $A_1$. Then the following inclusion holds true
	$$
	[S(\sigma,I),E(\sigma)]\le E(\sigma,I)\tp
	$$
\end{cor}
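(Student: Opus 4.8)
The plan is to follow Stepanov's localisation argument for the standard commutator formula, using Theorem~\ref{normal} as the input that makes it run. First I would reduce to a "generic" situation. Fix a generator $x(t)=x_\alpha(\xi t)$ (with $\alpha\in\Phi$, $\xi\in\sigma_\alpha$) of the polynomialised elementary group, and for $g\in S(\sigma,I)$ set $c(t)=[g,x(t)]\in G(\Phi,R[t],(t))$; note $c(t)\in S(\sigma[t])$ by Theorem~\ref{normal} applied over $R[t]$ (since $g\in S(\sigma)\le S(\sigma[t])$ and $x(t)\in\hat E(\sigma[t])$, their commutator lies in $\hat E(\sigma[t])\le S(\sigma[t])$), and moreover $c(t)$ reduces to the identity modulo $t$, and modulo $I$ as well, because $g\equiv e\bmod I$. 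Exactly as in the proof of Theorem~\ref{normal}, I would introduce the ideal $\mathfrak{a}=\{\theta\in R:\ c(\theta t)\in \hat E(\sigma[t],(t)\cap I[t])\}$ and aim to show $1\in\mathfrak a$; since $\Delta$ has no $A_1$-components, $\hat E(\sigma[t],(t)\cap I[t])$ is the genuine relative elementary net subgroup $E(\sigma[t],(t)\cap I[t])$ of Proposition~\ref{withoutA1}, and its image under $t\mapsto 1$ lands in $E(\sigma,I)$, giving the corollary.

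The additive and $R$-multiplicative closure of $\mathfrak a$ is routine (the commutator is a "polynomial map vanishing at $t=0$", so $c((t_1+t_2)t)$ factors through $c(t_1t)$ and $c(t_2t)$ up to conjugation, and rescaling $t$ preserves everything), so it suffices to rule out $\mathfrak a\subseteq\M$ for a maximal ideal $\M$. The local step is where Theorem~\ref{normal}, Theorem~\ref{local} and Proposition~\ref{withoutA1} combine: over the local ring $R_\M$ we have $S(F_\M(\sigma))=T E(F_\M(\sigma))\ovl W(\Phi,F_\M(\sigma))$, hence $[S(F_\M(\sigma),F_\M(I)),E(F_\M(\sigma))]$ can be computed with explicit generators and is seen to lie in $E(F_\M(\sigma),F_\M(I))$ — the toric and Weyl factors conjugate elementary root elements to elementary root elements, and the $E(F_\M(\sigma))$-factor contributes via the usual Chevalley commutator computation plus normality. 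Thus $F_\M(c(t))\in E(F_\M(\sigma[t]),(t))$, and then $F_s(c(t))\in E(F_s(\sigma[t]),(t))$ for some $s\in R\smallsetminus\M$. Applying Lemma~\ref{stothepowern} (whose hypothesis is met because $E(F_s(\sigma[t]),(t))\le\hat E(F_s(\sigma[t]),(t))$) yields $c(s^Nt)\in\hat E(\sigma[t],(t))$ for large $N$; intersecting with the congruence condition mod $I$ — which is preserved under all the maps and localisations in sight — upgrades this to $c(s^Nt)\in E(\sigma[t],(t)\cap I[t])$, so $s^N\in\mathfrak a\subseteq\M$, forcing $s\in\M$, a contradiction.

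The analogous argument must also be run for the generators $x(t)=t_\alpha^{\zeta,\eta}(\xi t)$ with $\alpha\in\Delta$; but since $\Delta$ has no $A_1$-components, Lemma~\ref{notA1} expresses each such element inside $E(\Delta,R)\le E(\sigma)$, so $[g,t_\alpha^{\zeta,\eta}(\xi t)]$ already falls under the elementary-generator case, and no separate treatment is needed. Finally, ranging over all generators $x(t)$ of $E(\sigma)$ and using that $g(1)=[g,x(1)]$ for $x(1)$ running over a generating set of $E(\sigma)$ shows $[g,E(\sigma)]\subseteq E(\sigma,I)$, and since this holds for every $g\in S(\sigma,I)$ and $E(\sigma,I)$ is normal in $\hat E(\sigma)$ (hence the commutators multiply correctly), we conclude $[S(\sigma,I),E(\sigma)]\le E(\sigma,I)$.

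The main obstacle I anticipate is the bookkeeping in the local step: one needs the commutator of the $\ovl W(\Phi,\sigma)$- and $T$-parts with $E(F_\M(\sigma))$ to land not just in $E(F_\M(\sigma))$ but in the \emph{relative} subgroup of level $F_\M(I)$, which requires carefully tracking the congruence condition mod $I$ through Theorem~\ref{local}'s decomposition (the torus factor of $g$ itself need not be $\equiv e\bmod I$, so one argues with the commutator, not with $g$ directly), together with checking that Lemma~\ref{stothepowern}'s conclusion can be refined to the relative level — this is where the decomposition $R=R_1\oplus I$ trick from Proposition~\ref{withoutA1}, applied after base change to a suitable polynomial ring, does the work.
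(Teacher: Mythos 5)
The decisive gap is your ``upgrade'' step. After Lemma~\ref{stothepowern} you only know $c(s^Nt)\in\hat E(\sigma[t],(t))$, and you then assert that ``intersecting with the congruence condition mod $I$'' yields $c(s^Nt)\in E(\sigma[t],(t)\cap I[t])$. That inference amounts to the inclusion $\hat E(\sigma[t],(t))\cap G(\Phi,R[t],I[t])\sub E(\sigma[t],tI[t])$, which is a statement of the same nature and difficulty as the corollary itself: knowing that an element is elementary of level $(t)$ and, separately, congruent to $e$ modulo $I[t]$ does not let you rewrite it in terms of elementary generators of the smaller level $(t)\cap I[t]$ --- this is precisely the failure mode of ``elementary group intersected with a congruence subgroup'' statements (relative $K_1$ obstructions), and nothing in the paper can be cited for it. Since everything earlier in your argument (the ideal $\mathfrak a$, the local step via Theorem~\ref{local}/Lemma~\ref{localnorm}, the dilation Lemmas~\ref{denominators} and~\ref{stothepowern}) is actually carried out only at level $(t)$, what you really prove is Theorem~\ref{normal} again; the relative level $I$, which is the whole content of the corollary, is never captured. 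To repair the argument along your lines you would have to run the entire localisation at the doubly relative level $tI[t]$: prove a local statement that the commutator lies in $E(F_{\M}(\sigma[t]),tI_{\M}[t])$ --- your sketch does not do this, and the obstacle you flag yourself (the factors of the decomposition of $F_{\M}(g)$ from Theorem~\ref{local} are not individually congruent to $e$ mod $I$) is genuine, while the $R=R_1\oplus I$ trick from Proposition~\ref{withoutA1} is unavailable because a general ideal in a general ring admits no such splitting --- and in addition prove $tI[t]$-versions of Lemmas~\ref{denominators} and~\ref{stothepowern}, which the paper does not supply.

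The paper's proof is different and much shorter: it manufactures exactly the splitting you are missing by passing to the generic element. For $g\in S(\sigma,I)$ and $h\in E(\sigma)$ one works over $R[G]$, where $g_{\gen,R}\in S(\dot{\sigma})$, and where one does have $R[G]=R\oplus I_{\aug,R}$ and $\dot{\sigma}_\alpha=\sigma_\alpha\oplus(\dot{\sigma}_\alpha\cap I_{\aug,R})$. A single application of Theorem~\ref{normal} gives $[g_{\gen,R},h]\in E(\dot{\sigma})$; splitting each parameter along the direct sum and noting that the augmentation specialization $i_*([g_{\gen,R},h])$ is trivial shows $[g_{\gen,R},h]\in E(\dot{\sigma},I_{\aug,R})$; then Proposition~\ref{withoutA1} writes it in terms of the generators $\leftact{x_{-\gamma}(\zeta)}{x_{\gamma}(\xi)}$ with $\xi\in\dot{\sigma}_{\gamma}\cap I_{\aug,R}$, and specializing at $g$ via Lemma~\ref{Sgeneric}, using $g\in S(\sigma,I)$ so that these functions take values in $\sigma_\gamma\cap I$, gives $[g,h]\in E(\sigma,I)$. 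So your instinct that one should base change to a ring in which the relative ideal splits off is the right one, but the correct ambient object is $R[G]$ with the augmentation ideal, not a Quillen--Suslin patching argument at level $(t)$.
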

%\vspace{-4ex}}

\begin{proof}
Let $g\in S(\sigma,I)$, and $h\in E(\sigma)$. Recall that given a net $\sigma$ we defined certain net $\dot{\sigma}$ in the ring $R[G]$. We identify the element $h$ with its image in the group $G(\Phi,R[G])$, then we have $h\in E(\dot{\sigma})$. By definition of the net $\dot{\sigma}$ we have $g_{\gen,R}\in S(\dot{\sigma})$. Hence by Theorem~\ref{normal}, using Proposition~\ref{withoutA1}, we obtain $[g_{\gen,R},h]\in E(\dot{\sigma})$; i.e. the following equality holds true
$$
[g_{\gen,R},h]=\prod_i x_{\beta_i}(\tilde{\xi}_i),\eqno{(\#)}
$$
where $\beta_i\in\Phi$, and $\tilde{\xi}_i\in\dot{\sigma}_{\beta_i}$.

It is easy to see that $R[G]=R\oplus I_{\aug,R}$ if we identify $R$ with its image in $R[G]$. In addition, we have $\dot{\sigma}_{\alpha}=\sigma_{\alpha}\oplus (\dot{\sigma}_{\alpha}\cap I_{\aug,R})$. Indeed, let $t=t_1+t_2$, where $t\in\dot{\sigma}_{\alpha}$, $t_1\in R$ and $t_2\in I_{\aug,R}$. Then by Lemma~\ref{Sgeneric}, using the fact that the identity element belongs to $S(\sigma)$, we obtain that $t_1=i(t)\in\sigma_{\alpha}$, where $i$ is the augmentation homomorphism. Thus if we consider $t_1$ as an element of the ring $R[G]$, then we have $t_1\in\dot{\sigma}_{\alpha}$; hence we have $t_2\in\dot{\sigma}_{\alpha}$.

Therefore, using additivity of $x_\alpha$, we may assume that each of the elements $\tilde{\xi}_i$ in the equality $(\#)$ belong either to $\sigma_{\beta_i}$, or to $\dot{\sigma}_{\beta_i}\cap I_{\aug,R}$. 

Note that if we remove from the right hand side of the equality $(\#)$ the factors that have $\tilde{\xi}_i\in \dot{\sigma}_{\beta_i}\cap I_{\aug,R}$, then we obtain the element that is equal to 
$$
i_*([g_{\gen,R},h])\in G(\Phi,R)\le G(\Phi,R[G]).
$$
On the one hand, this is the identity element because $i_*(g_{\gen,R})=e$.On the other hand, its image under projection $E(\dot{\sigma})\to E(\dot{\sigma})/E(\dot{\sigma},I_{\aug,R})$ coincides with the image of the element  $[g_{\gen,R},h]$. Therefore, we have
$$
[g_{\gen,R},h]\in E(\dot{\sigma},I_{\aug,R}),
$$
and by Proposition~\ref{withoutA1} the following equality holds
$$
[g_{\gen,R},h]=\prod_i \leftact{x_{-\gamma_i}(\zeta_i)}{x_{\gamma_i}(\xi_i)},
$$
where $\gamma_i\in\Phi$, $\xi_i\in\dot{\sigma}_{\gamma_i}\cap I_{\aug,R}$, and $\zeta_i\in\dot{\sigma}_{-\gamma_i}$.

Hence we obtain that
$$
[g,h]=\prod_i \leftact{x_{-\gamma_i}(\zeta_i(g))}{x_{\gamma_i}(\xi_i(g))}\tp
$$
By Lemma~\ref{Sgeneric}, using that $g\in S(\sigma,I)$, we obtain that $\xi_i(g)\in\sigma_{\gamma_i}\cap I$, and $\zeta_i(g)\in\sigma_{-\gamma_i}$; i.e. $[g,h]\in E(\sigma,I)$.
\end{proof}

\begin{rem*} It is easy to see that if the group $E(\Delta,R)$ is perfect, then the converse inclusion holds true as well because in this case we have
$$
[E(\sigma),E(\sigma,I)]=E(\sigma,I).
$$
\end{rem*}

\section{The proof of Theorem~\ref{finiteindex}}

We need the following lemma.

\begin{lem}\label{TEcapW} Let $\sigma$ be a net of ideals in the ring $R,$ and let $\fr{P}\in\Spec(R)$. Then the group $(T(\Phi,R_{\fr{P}})E(F_{\fr{P}}(\sigma)))\cap \ovl{W}(\Phi,F_{\fr{P}}(\sigma))$ coincides with the preimage of the subgroup $W(\Delta'_{\fr{P}}\cap -\Delta'_{\fr{P}})\le W(\Phi)$ in the group $\ovl{W}(\Phi)$.
\end{lem}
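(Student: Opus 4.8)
The plan is to reduce everything to the local ring $R':=R_{\fr{P}}$ with net $\sigma':=F_{\fr{P}}(\sigma)$; by locality $\Delta'_{\fr{P}}=\{\alpha\in\Phi\colon\sigma'_\alpha=R'\}$, and I write $\Delta'_r:=\Delta'_{\fr{P}}\cap-\Delta'_{\fr{P}}$ and $\Delta'_u:=\Delta'_{\fr{P}}\sm-\Delta'_{\fr{P}}$. For the inclusion ``$\supseteq$'' I would take $w\in\ovl{W}(\Phi)$ whose image $\bar{v}$ in $W(\Phi)$ lies in $W(\Delta'_r)$ and verify the two memberships separately. That $w\in\ovl{W}(\Phi,\sigma')$ follows from $W(\Delta'_r)\le W(\Phi,\sigma')$ (as already used just before Theorem~\ref{finiteindex}: a reflection $s_\alpha$ with $\alpha\in\Delta'_r$ merely reverses $\alpha$-strings of roots, along which $\sigma'$ is constant because $\sigma'_{\pm\alpha}=R'$, cf.\ Lemma~\ref{EqualIdeals}), together with the fact that $\ovl{W}(\Phi,\sigma')$ is the preimage of $W(\Phi,\sigma')$ in $\ovl{W}(\Phi)$. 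That $w\in T(\Phi,R')E(\sigma')$ follows because the preimage of $W(\Delta'_r)$ in $\ovl{W}(\Phi)$ is generated by the elements $x_\alpha(1)x_{-\alpha}(-1)x_\alpha(1)$ with $\alpha\in\Delta'_r$, which lie in $E(\sigma')$ since $1\in\sigma'_{\pm\alpha}$, together with the kernel of $\ovl{W}(\Phi)\to W(\Phi)$, which lies in $T(\Phi,R')$.

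For the reverse inclusion I would take $w=te\in\ovl{W}(\Phi,\sigma')$ with $t\in T(\Phi,R')$, $e\in E(\sigma')$, and must show that the image $\bar{v}$ of $w$ in $W(\Phi)$ lies in $W(\Delta'_r)$. The idea is to reduce modulo the maximal ideal $\M':=\fr{P}R_{\fr{P}}$, with residue field $K$, and then invoke Bruhat. Since every $\sigma'_\alpha$ is either $R'$ or (by locality) contained in $\M'$, one has $\rho_{\M'}(\sigma')=\sigma_{\Delta'_{\fr{P}}}$ over $K$, so $\rho_{\M'}(w)=\bar{t}\bar{e}$ with $\bar{t}\in T(\Phi,K)$ and $\bar{e}\in E(\Delta'_{\fr{P}},K)$, while $\rho_{\M'}(w)$ is still a monomial element with Weyl image $\bar{v}$. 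I would fix a system of positive roots $\Phi^+$ of $\Phi$ with $\Delta'_u\subseteq\Phi^+$, and use the standard Levi-type decomposition $E(\Delta'_{\fr{P}},K)=E(\Delta'_r,K)\cdot\langle X_\gamma\colon\gamma\in\Delta'_u\rangle$, the second factor being unipotent and contained in $U:=\prod_{\alpha\in\Phi^+}X_\alpha$, to write $\bar{e}=e_ru$. Applying the Bruhat decomposition inside the Chevalley group $E(\Delta'_r,K)$ gives $e_r\in B_r\dot{v}_rB_r$ for a unique $v_r\in W(\Delta'_r)$, where $B_r$ is a Borel subgroup of $E(\Delta'_r,K)$ attached to the positive system $\Delta'_r\cap\Phi^+$; since then $B_r\le B:=T(\Phi,K)U$ and also $\bar{t},u\in B$, I get $\rho_{\M'}(w)\in B\dot{v}_rB$, and uniqueness of Bruhat cells in $G(\Phi,K)$ forces $\bar{v}=v_r\in W(\Delta'_r)$.

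The hard part is this reverse inclusion, precisely the fact that $T(\Phi,K)E(\Delta'_{\fr{P}},K)$ can only meet the Bruhat cells of $G(\Phi,K)$ indexed by $W(\Delta'_r)$. What makes it go through is the Levi-type decomposition of $E(\Delta'_{\fr{P}},K)$ into the subsystem subgroup of $\Delta'_r$ and the unipotent subgroup of $\Delta'_u$, combined with the elementary observation that $\Delta'_r\cap\Phi^+$ is a positive system of $\Delta'_r$, so that a Borel of $E(\Delta'_r,K)$ embeds into the standard Borel $B$ of $G(\Phi,K)$; everything else is routine tracking of reduction homomorphisms.
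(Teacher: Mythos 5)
Your proof is correct and takes essentially the same route as the paper: the inclusion of the preimage of $W(\Delta'_{\fr{P}}\cap-\Delta'_{\fr{P}})$ is immediate, and for the converse the paper likewise reduces modulo the maximal ideal of $R_{\fr{P}}$ to obtain a monomial element of $GG(\Delta'_{\fr{P}},K)$ whose Weyl image must lie in $W(\Delta'_{\fr{P}}\cap-\Delta'_{\fr{P}})$. The only difference is that you make explicit the paper's ``it is easy to see'' step, via the Levi-type decomposition of $E(\Delta'_{\fr{P}},K)$ and disjointness of Bruhat cells, which is a filling-in of details rather than a different argument.
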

\begin{proof}
Clearly, the preimage of the subgroup $W(\Delta'_{\fr{P}}\cap -\Delta'_{\fr{P}})\le W(\Phi)$ in the group $\ovl{W}(\Phi)$ is contained in $(T(\Phi,R_{\fr{P}})E(F_{\fr{P}}(\sigma)))\cap \ovl{W}(\Phi,F_{\fr{P}}(\sigma))$. Now we prove the converse inclusion. Let 
$$
w\in(T(\Phi,R_{\fr{P}})E(F_{\fr{P}}(\sigma)))\cap \ovl{W}(\Phi,F_{\fr{P}}(\sigma)),
$$
then we have
$$
\rho_{\fr{P}}(w)\in GG(\Delta'_{\fr{P}},R/\fr{P})\cap \ovl{W}(\Phi)\tp
$$
Hence it is easy to see that the image of the element $\rho_{\fr{P}}(w)$, and hence of the element~$w$, in the group $W(\Phi)$ belongs to the subgroup $W(\Delta'_{\fr{P}}\cap -\Delta'_{\fr{P}})$. 
\end{proof}

Now we prove Theorem~~\ref{finiteindex}.

It follows from Theorem~\ref{local} and Lemma~\ref{TEcapW} that $T(\Phi,R_{\fr{P}})E(F_{\fr{P}}(\sigma))$ is a normal subgroup in $S(F_{\fr{P}}(\sigma))$, and that $$S(F_{\fr{P}}(\sigma))/(T(\Phi,R_{\fr{P}})E(F_{\fr{P}}(\sigma)))\simeq W(\Phi,F_{\fr{P}}(\sigma))/W(\Delta'_{\fr{P}}\cap -\Delta'_{\fr{P}})\tp$$

Therefore, localisation homomorphisms $\map{F_{\fr{P}}}{S(\sigma)}{S(F_{\fr{P}}(\sigma))}$ define ho\-mo\-mo\-rphism.
$$
\map{\psi}{S(\sigma)}{\prod_{\fr{P}\in\Theta} W(\Phi,F_{\fr{P}}(\sigma))/W(\Delta'_{\fr{P}}\cap -\Delta'_{\fr{P}})}\tp
$$

Set $H=\Ker\psi$. By definition $G(\sigma)\le H$; it remains to prove the converse inclusion.

Let $g\in H$ and $\fr{P}\in\Spec(R)$. By Theorem~\ref{local} $F_{\fr{P}}(g)=g_1w$, where $g_1\in T(\Phi,R_{\fr{P}})E(F_{\fr{P}}(\sigma))$ and $w\in \ovl{W}(\Phi,\sigma)$. We must prove that under projection on $W(\Phi)$ the element $w$ gets in the subgroup $W(\Delta'_{\fr{P}}\cap -\Delta'_{\fr{P}})$.

By Zorn lemma, there exists a prime ideal $\fr{P}_1\sub \fr{P}$, containing all the $\sigma_\alpha$, $\alpha\in\Phi\sm\Delta'_{\fr{P}}$, and minimal among prime ideals with such properties. Then $\Delta'_{\fr{P}_1}=\Delta'_{\fr{P}}$, and $\fr{P}\in\Theta$. Therefore, $F_{\fr{P}_1}(g)\in T(\Phi,R_{\fr{P}_1})E(F_{\fr{P}_1}(\sigma))$, but $F_{\fr{P}_1}=F_{F_{\fr{P}}(\fr P_1)}\circ F_{\fr{P}}$, which, using Lemma~\ref{TEcapW}, implies that the element $F_{F_{\fr{P}}(\fr{P}_1)}(w)\in\ovl{W}(\Phi,\sigma)$ under projection on  $W(\Phi)$ gets in the subgroup
$$
W(\Delta'_{\fr{P}_1}\cap -\Delta'_{\fr{P}_1})=W(\Delta'_{\fr{P}}\cap -\Delta'_{\fr{P}}).
$$
In addition, images of $w$ and $F_{F_{\fr{P}}(\fr{P}_1)}(w)$ under this projection coincide.

\section{The proof of Theorem~\ref{nilpotent}}

We adapt for our context the given in~\cite{HazVav} proof of the theorem on $K_1$ being nilpotent. In addition we simplify this proof using generic element.

Let $R$ be a commutative ring, and let $s\in R$. Following~\cite{BakNonabelian} and~\cite{HazVav}, we use the following notation
$$
\tilde{R}_s=\varinjlim_i\varprojlim_n R_i/(s^n),
$$ 
where the limit is taken over all finitely generated subrings $R_i$ of $R$, which contain $s$. Since the ring $R$ is the limit of its subrings $R_i$, we have a canonical homomorphism.
$$
\map{\tilde{F}_s}{R}{\tilde{R}_s}\tp
$$

If the ring $R$ is finitely generated, then $\tilde{R}_s$ coincides with the usual completion:
$$
\tilde{R}_s=\hat{R}_s=\varprojlim_n R/(s^n)\tp
$$

By $\delta(R)$ we denote Bass–Serre dimension of the ring $R$. We will use only the two following properties of this dimension (see~\cite{BakNonabelian} or~\cite{HazVav}).

\begin{lem}\label{InductionDim} \begin{enumerate}
	\item $\delta(R)=0$ if and only if the ring $R$ is semilocal.
	
	\item Let $\delta(R)<\infty,$ then there exists a finite collection of maximal ideals $\M_1,$ $\dots,$ $\M_k$ of the ring $R$ such that for any $s\in R\sm\left(\bigcup_k \M_k \right)$ we have $\delta(\tilde{R}_s)<\delta(R)$. 
\end{enumerate}
\end{lem}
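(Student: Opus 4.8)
These two statements are standard facts about the Bass--Serre dimension, established in \cite{BakNonabelian} and \cite{HazVav}; here I sketch the dimension-theoretic argument behind them. Recall that $\delta(R)$ is the dimension of the maximal spectrum $\Max(R)$, i.e.\ the supremum of lengths of chains of irreducible closed subsets of $\Max(R)$, with the convention that $\delta(R)=\infty$ whenever $\Max(R)$ fails to be a Noetherian space; equivalently $\delta(R)$ is the supremum of lengths of chains of prime ideals each of which is an intersection of maximal ideals. The plan is to read off both assertions from this description of $\Max(R)$ together with the behaviour of the maximal spectrum under reduction modulo a single element and under $s$-adic completion.

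For the first assertion, suppose first that $R$ is semilocal, so that $\Max(R)=\{\M_1,\dots,\M_n\}$ is finite. If a prime $\fr{P}$ is an intersection $\fr{P}=\bigcap_{i\in S}\M_i$ of maximal ideals, then $\prod_{i\in S}\M_i\sub\fr{P}$ together with primality forces $\M_i\sub\fr{P}\sub\M_i$ for some $i$, so $\fr{P}=\M_i$ is maximal; hence there are no nontrivial chains of such primes and $\delta(R)=0$. Conversely, $\delta(R)=0$ forces $\Max(R)$ to be a Noetherian space of dimension $0$, that is, a finite discrete set of closed points, which says precisely that $R$ has finitely many maximal ideals.

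For the second assertion, assume $\delta(R)=d<\infty$, so that $\Max(R)$ is a Noetherian space and decomposes into finitely many irreducible components; let $Z_1,\dots,Z_k$ be those of maximal dimension $d$, with generic points the primes $\eta_1,\dots,\eta_k$. For each $j$ I would choose a maximal ideal $\M_j\in Z_j$; then $\eta_j\sub\M_j$, so any $s\in R\sm\bigl(\bigcup_j\M_j\bigr)$ satisfies $s\notin\eta_j$ for all $j$, i.e.\ no top-dimensional component is contained in the closed set $V(s)$. Consequently each $Z_j\cap V(s)$ is a proper closed subset of the irreducible $Z_j$ and so has dimension $<d$, while the remaining components already have dimension $<d$; therefore $\dim\bigl(V(s)\cap\Max(R)\bigr)<d$. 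It remains to identify this dimension with $\delta(\tilde{R}_s)$. The key local computation is that in an $s$-adically complete ring the element $s$ lies in the Jacobson radical, since the series $\sum_n (sa)^n$ converges, so every maximal ideal of such a ring contains $s$; hence for a finitely generated subring $R_i\ni s$ one has $\Max(\varprojlim_n R_i/(s^n))\cong V(s)\cap\Max(R_i)$, and passing to the direct limit over the $R_i$ identifies $\delta(\tilde{R}_s)$ with the dimension cut out by $V(s)$ in the maximal spectrum. Combining this with the dimension drop above yields $\delta(\tilde{R}_s)<d=\delta(R)$.

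The main obstacle is this last identification. The ring $\tilde{R}_s$ is built as $\varinjlim_i\varprojlim_n R_i/(s^n)$ precisely so as to make sense for non-Noetherian $R$, and controlling the maximal spectrum, and its dimension, through both the inverse limit (completion) and the direct limit over finitely generated subrings requires care: contraction of maximal ideals along $R_i\hookrightarrow R$ need not preserve maximality, and the dimensions of the spaces $\Max(R_i)$ must be compared with $\delta(R)$. This is exactly the bookkeeping carried out in \cite{BakNonabelian} and \cite{HazVav}; once it is in place, the finitely many maximal ideals of the statement are simply one closed point chosen from each top-dimensional component of $\Max(R)$, and the whole argument reduces to the elementary dimension count above.
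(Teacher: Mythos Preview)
The paper does not prove this lemma at all: it is stated immediately after the sentence ``We will use only the two following properties of this dimension (see~\cite{BakNonabelian} or~\cite{HazVav})'' and is simply taken as a black box from those references. Your proposal is therefore not comparable to a proof in the paper --- there is none --- but rather an expansion of the citation. As such it is fine: you correctly identify the references, give the standard interpretation of $\delta(R)$ as the dimension of $\Max(R)$ as a Noetherian space, and sketch the expected argument (finite discrete for part~(1), choosing one maximal ideal on each top-dimensional irreducible component for part~(2)). You are also honest that the identification $\delta(\tilde{R}_s)=\dim(V(s)\cap\Max(R))$ through the double limit is where the real work lies and defer to the cited sources for it, which is exactly what the paper does.
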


Let us prove several technical lemmas.

\begin{lem}\label{functor} Let $\sigma_1,$ $\sigma_2$ be nets of ideals in rings $R_1,$ $R_2$ correspondently, and homomorphism $\map{f}{R_1}{R_2}$ be such that $f(\sigma_1)\sub \sigma_2$. Then 
$$
{f(G(\sigma_1))\le G(\sigma_2)}.
$$
\end{lem}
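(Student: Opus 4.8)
The plan is to unwind the definition of $G(\sigma)$ and reduce everything to the functoriality of the Chevalley group, of the split maximal torus, and of the elementary subgroup with respect to ring homomorphisms. Let $g\in G(\sigma_1)$; by definition of $G(\sigma_2)$ we must check that $F_{\fr{Q}}(f(g))\in T(\Phi,(R_2)_{\fr{Q}})E(F_{\fr{Q}}(\sigma_2))$ for every $\fr{Q}\in\Spec(R_2)$.

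First I would fix $\fr{Q}\in\Spec(R_2)$ and set $\fr{P}=f^{-1}(\fr{Q})\in\Spec(R_1)$. Then $f$ maps $R_1\sm\fr{P}$ into $R_2\sm\fr{Q}$, so it induces a (local) ring homomorphism $\bar f\colon(R_1)_{\fr{P}}\to(R_2)_{\fr{Q}}$ with $\bar f\circ F_{\fr{P}}=F_{\fr{Q}}\circ f$ as homomorphisms $R_1\to(R_2)_{\fr{Q}}$; applying the functor $G(\Phi,-)$ yields the same identity on the level of Chevalley groups, so in particular $F_{\fr{Q}}(f(g))=\bar f\big(F_{\fr{P}}(g)\big)$.

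Next, since $g\in G(\sigma_1)$ we may write $F_{\fr{P}}(g)=th$ with $t\in T(\Phi,(R_1)_{\fr{P}})$ and $h\in E(F_{\fr{P}}(\sigma_1))$, hence $F_{\fr{Q}}(f(g))=\bar f(t)\,\bar f(h)$. Functoriality of the split maximal torus gives $\bar f(t)\in T(\Phi,(R_2)_{\fr{Q}})$. For the elementary factor, $\bar f$ sends each generator $x_\alpha(\xi)$ of $E(F_{\fr{P}}(\sigma_1))$, with $\xi\in F_{\fr{P}}((\sigma_1)_\alpha)$, to $x_\alpha(\bar f(\xi))$, and by the compatibility above $\bar f\big(F_{\fr{P}}((\sigma_1)_\alpha)\big)=F_{\fr{Q}}\big(f((\sigma_1)_\alpha)\big)\sub F_{\fr{Q}}\big((\sigma_2)_\alpha\big)$, the last inclusion being exactly the hypothesis $f(\sigma_1)\sub\sigma_2$. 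Therefore $\bar f(h)\in E(F_{\fr{Q}}(\sigma_2))$, and $F_{\fr{Q}}(f(g))\in T(\Phi,(R_2)_{\fr{Q}})E(F_{\fr{Q}}(\sigma_2))$, as needed; since $\fr{Q}$ was arbitrary, $f(g)\in G(\sigma_2)$.

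This bookkeeping is entirely routine, so I do not expect a genuine obstacle. The only points one should be slightly careful about are: that $T(\Phi,-)E(-)$ is a priori just a product of two subgroups and need not be a subgroup, so one invokes only that a group homomorphism maps this subset into the corresponding product subset; and that $F_{\fr{P}}(\sigma_1)$ denotes the net whose $\alpha$-component is the ideal \emph{generated} by the image, so one uses the elementary fact that the image of a generated ideal under a ring homomorphism generates the image, making ``apply $\bar f$, then generate'' agree with ``apply $F_{\fr{Q}}\circ f$, then generate''.
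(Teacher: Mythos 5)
Your argument is correct and is essentially the paper's proof written out in full: the paper disposes of the lemma with the single observation that the preimage of a prime ideal is prime, which is exactly your step $\fr{P}=f^{-1}(\fr{Q})$, the rest being the routine functoriality bookkeeping you carry out. Nothing to change.
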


\begin{proof}
This follows from the fact that a preimage of a prime ideal is a prime ideal.
\end{proof}

\begin{lem}\label{GForSemiLocal} Let $\sigma$ be a net of ideals in a semilocal ring $R$. Then the following equality holds true\textup:
$$
G(\sigma)=T(\Phi,R)E(\sigma)\tp
$$
\end{lem}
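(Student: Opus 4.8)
The plan is to reduce everything modulo the Jacobson radical $J$ of $R$ and then invoke Theorem~\ref{Jacobson}. The inclusion $T(\Phi,R)E(\sigma)\le G(\sigma)$ is immediate, since every localisation homomorphism $F_{\fr P}$ carries $T(\Phi,R)$ into $T(\Phi,R_{\fr P})$ and $E(\sigma)$ into $E(F_{\fr P}(\sigma))$. For the converse, I would fix $g\in G(\sigma)$; by the inclusion $G(\sigma)\le S(\sigma)$ noted in the excerpt, we already have $g\in S(\sigma)$.

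First I would record two preliminary facts. (i) $T(\Phi,R)E(\sigma)$ is a subgroup of $S(\sigma)$: conjugation by an element of $T(\Phi,R)$ multiplies the argument of each $x_\alpha$ by a unit, and since $\sigma_\alpha$ is an ideal this preserves the generating set of $E(\sigma)$, so $T(\Phi,R)$ normalises $E(\sigma)$; moreover $T(\Phi,R)$ stabilises $L(\sigma)$ (it fixes $D$ and scales each $e_\alpha$ by a unit), and $E(\sigma)\le\hat E(\sigma)\le S(\sigma)$, so both factors lie in $S(\sigma)$. (ii) Since $R$ is semilocal with $\Max(R)=\{\M_1,\dots,\M_n\}$, the Chinese remainder theorem gives $R/J\cong\prod_{i=1}^n R/\M_i$, the map $R^*\to(R/J)^*$ is surjective (units lift modulo the Jacobson radical), whence $T(\Phi,R)$ surjects onto $T(\Phi,R/J)$, and of course $E(\sigma)$ surjects onto $E(\rho_J(\sigma))$; consequently $T(\Phi,R)E(\sigma)$ surjects onto $T(\Phi,R/J)E(\rho_J(\sigma))$.

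Next I would push $g$ down to $R/J$. For each $\M_i$ the defining property of $G(\sigma)$ gives $F_{\M_i}(g)\in T(\Phi,R_{\M_i})E(F_{\M_i}(\sigma))$; composing with the residue map $R_{\M_i}\to R_{\M_i}/\M_iR_{\M_i}=R/\M_i$, which restricts to $\rho_{\M_i}$ on $R$ and sends $F_{\M_i}(\sigma)$ to $\rho_{\M_i}(\sigma)$, yields $\rho_{\M_i}(g)\in T(\Phi,R/\M_i)E(\rho_{\M_i}(\sigma))$. Assembling these through $R/J\cong\prod_i R/\M_i$, and using that $G(\Phi,-)$, $T(\Phi,-)$ and $E$ commute with finite direct products of rings, we get $\rho_J(g)\in T(\Phi,R/J)E(\rho_J(\sigma))$. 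By (ii) there is $h\in T(\Phi,R)E(\sigma)$ with $\rho_J(h)=\rho_J(g)$, so $gh^{-1}\in G(\Phi,R,J)$, and $gh^{-1}\in S(\sigma)$ by (i). Theorem~\ref{Jacobson} now gives
$$
gh^{-1}\in S(\sigma)\cap G(\Phi,R,J)\le T(\Phi,R,J)E(\sigma\cap J)\le T(\Phi,R)E(\sigma),
$$
and therefore $g=(gh^{-1})h\in T(\Phi,R)E(\sigma)$ by (i), which finishes the proof.

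The genuine content is Theorem~\ref{Jacobson}, so I do not expect a serious obstacle; the rest is bookkeeping. The two points that deserve care are the verification that $T(\Phi,R)$ normalises $E(\sigma)$, so that $T(\Phi,R)E(\sigma)$ is an honest subgroup, and the passage from the localisation data built into the definition of $G(\sigma)$ to a statement over the semisimple quotient $R/J$ — that is, that the residue maps attached to the $F_{\M_i}$ assemble to $\rho_J$ and that the resulting product of groups is hit by $T(\Phi,R)E(\sigma)$.
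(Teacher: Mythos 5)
Your proof is correct and follows essentially the same route as the paper: reduce modulo the Jacobson radical (where semilocality makes $R/J$ a finite product of fields), use that $T(\Phi,R)E(\sigma)$ surjects onto $T(\Phi,R/J)E(\rho_J(\sigma))$, and finish by applying Theorem~\ref{Jacobson} to the congruence element. The only cosmetic difference is that you obtain $\rho_J(g)\in T(\Phi,R/J)E(\rho_J(\sigma))$ directly from the localisations at the maximal ideals together with the Chinese remainder theorem, whereas the paper routes this step through Lemma~\ref{functor} applied to $\rho_J$ and the definition of $G(\sigma)$ over a product of fields.
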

\begin{proof}
The right hand side is obviously contained in the left hand side. Let us prove the converse inclusion.

Let $J$ be the Jacobson radical of the ring $R$. Then the quotient ring $R/J$ is a direct product of a finite collection of fields. Hence be Lemma~\ref{functor} and by definition of the group $G(\sigma)$ we have
$$
\rho_J(G(\sigma))\le G(\rho_J(\sigma))\le T(\Phi,R/J)E(\rho_J(\sigma))\tp
$$
Since the group $T(\Phi,R)E(\sigma)$ maps onto the group $T(\Phi,R/J)E(\rho_J(\sigma))$ sur\-jec\-tively, it follows that $G(\sigma)\le G(\Phi,R,J)T(\Phi,R)E(\sigma)$. Therefore, applying Theorem~\ref{Jacobson}, we obtain
\begin{align*} %$$
	G(\sigma)&\le (G(\Phi,R,J)T(\Phi,R)E(\sigma))\cap S(\sigma)
	\\
	&=(G(\Phi,R,J)\cap S(\sigma))T(\Phi,R)E(\sigma)=T(\Phi,R)E(\sigma)\tp
	\qedhere
\end{align*} %$$
\end{proof}

\begin{rem*} The previous lemma can also be deduced from Theorem~1 in~\cite{VavPlotII}.
\end{rem*}
\begin{lem}\label{Denominators2} Assume that the subsystem $\Delta$ has no irreducible components of type $A_1$. Let $\sigma$ be a net of ideals in a Noetherian ring $R,$ let $s\in R$ be a non nilpotent element\textup, and let $h\in F_s^{-1}(E(F_s(\sigma)))\le G(\Phi,R)$. Then there exists $N\in\N$ such that the following inclusion holds true
$$
[S(\sigma,(s^N)),h]\sub E(\sigma)\tp
$$
\end{lem}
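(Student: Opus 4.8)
The plan is to mimic the standard ``localisation and patching'' argument used by Stepanov in~\cite{StepUniloc} for the ordinary standard commutator formula, but now localising the net-subgroup $S(\sigma)$ rather than the whole Chevalley group, and keeping careful track of the powers of $s$ that accumulate in the denominators. First I would record what the hypothesis gives: since $h\in F_s^{-1}(E(F_s(\sigma)))$, the image $F_s(h)$ lies in $E(F_s(\sigma))$, so $F_s(h)$ is a product of finitely many elementary generators $x_\alpha(\xi/s^m)$ with $\xi\in\sigma_\alpha$. Clearing denominators, there is a single exponent $m_0$ such that $F_s(h)$ is the image of an element of $E(\sigma[s^{-1}])$ written with denominators bounded by $s^{m_0}$; in particular, working over the polynomial ring $R[t]$ there is an element $\bar h(t)\in G(\Phi,R[t])$ with $\bar h(1)=h$, $\bar h(0)=e$, and $F_s(\bar h(t))\in E(F_s(\sigma[t]),(t))$ — essentially the set-up of Lemma~\ref{denominators} and Lemma~\ref{stothepowern}, now read for $E$ in place of $\hat E$ (legitimate here because $\Delta$ has no $A_1$-components, so $\hat E(\sigma)=\langle E(\sigma)\cup\ldots\rangle$ and Proposition~\ref{withoutA1} applies).

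Next I would take an arbitrary $g\in S(\sigma,(s^N))$ and form the principal localisation $F_s$. On the one hand $F_s(g)\in S(F_s(\sigma))$ and $F_s(h)\in E(F_s(\sigma))$, so by Theorem~\ref{normal} together with Proposition~\ref{withoutA1} we get $[F_s(g),F_s(h)]\in E(F_s(\sigma),F_s((s^N)))$; that is, the commutator localises into a \emph{relative} elementary net subgroup of level the image of $(s^N)$. On the other hand — and this is the point of the exponent $N$ — I want to lift that localised commutator back to $R$ on the nose. The mechanism is: $g\in G(\Phi,R,(s^N))$ means $g\equiv e \bmod (s^N)$, so one can write $g=\bar g(s^N)$ for some $\bar g(t)\in G(\Phi,R[t],(t))$ with $\bar g(0)=e$; feeding $\bar g(t)$ and the polynomial lift $\bar h(t)$ of $h$ into the commutator and using the relative generator description of Proposition~\ref{relativegenerators}/\ref{withoutA1}, the commutator $[\bar g(t),\bar h(t)]$ becomes a product of conjugated elementary generators whose parameters are divisible by $t$ and lie in the appropriate $\sigma_\alpha$ after passing through $F_s$. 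Then Lemma~\ref{stothepowern} (in its $E$-version) says that, after rescaling $t\mapsto s^{N_1}t$ for $N_1$ large enough, this product already lies in $E(\sigma[t],(t))$ over $R[t]$ itself, with no denominators. Setting $N=N_1$ and substituting $t=1$ (equivalently, reading off that $g=\bar g(s^N t)|_{t=1}$ works because $g\in S(\sigma,(s^N))$) gives $[g,h]\in E(\sigma,(s^N))\subseteq E(\sigma)$, uniformly in $g$. Noetherianity is used exactly once, to guarantee that $F_s(h)$ and the finitely many elementary factors defining it are already defined over a finitely generated subring, so that the ``big enough $N$'' produced by Lemma~\ref{stothepowern} can be chosen independently of $g$.

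The main obstacle I expect is the bookkeeping in the lifting step — ensuring that a \emph{single} exponent $N$ works simultaneously for every $g\in S(\sigma,(s^N))$, not an exponent depending on $g$. This is where I would lean on the generic-element formalism of Subsection 2 on affine schemes: replace $g$ by the generic point $g_{\gen,R}$ of the scheme cut out inside $G(\Phi,-)$ by the congruence and stabiliser conditions, apply Theorem~\ref{normal} and Lemma~\ref{denominators}/\ref{stothepowern} once to $[g_{\gen,R},h]$ over the ring of regular functions, extract the finite list of elementary factors and the resulting exponent $N$ there, and then specialise along the $R$-algebra homomorphism $f$ with $f_*(g_{\gen,R})=g$, using Lemma~\ref{Sgeneric} to see that $f$ carries the relevant net $\dot\sigma$ into $\sigma$ and hence carries each factor $x_{\gamma_i}(\xi_i)$ with $\xi_i\in\dot\sigma_{\gamma_i}\cap(\text{level})$ to an honest element of $E(\sigma,(s^N))$. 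That makes $N$ depend only on $s$, $h$ and the data $(\Phi,\Delta,\sigma)$, which is precisely the statement; the rest is the Chevalley commutator formula and degree counting, which I would not spell out in detail.
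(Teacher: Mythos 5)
Your overall frame (localise, use the relative commutator formula over $R_s$, make the exponent uniform via the generic element) does match the paper's strategy, which applies Corollary~\ref{StandComm} to $F_s(g_{\gen,R})$ and $h$ over $R_s[G]$ and then specialises along $f$ with $f_*(g_{\gen,R})=g$. But the step you rely on to get back from $R_s$ to $R$ has a genuine gap. You propose to write $g=\bar g(s^N)$ for a polynomial curve $\bar g(t)\in G(\Phi,R[t],(t))$ with $\bar g(0)=e$ and then invoke Lemma~\ref{stothepowern} and substitute $t=1$. No such curve exists for a general element of a congruence subgroup: $g\equiv e \bmod (s^N)$ does not produce a one-parameter deformation of $g$ through the identity inside the group scheme (already for $\SL_n$, replacing $e+s^NM$ by $e+tM$ destroys the determinant condition). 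Moreover, even granting such lifts, evaluating the rescaled family at $t=1$ gives the commutator of $\bar g(s^N)$ and $\bar h(s^N)$, not of the pair $(g,h)$ you started with. In the proof of Theorem~\ref{normal} the variable $t$ scales the argument of a single root element or transvection, and the additivity $g(t_1+t_2)=g(t_1)g(t_2)$ is exactly what makes the ideal trick work; a commutator with an arbitrary $g\in S(\sigma,(s^N))$ has no such structure, so Lemma~\ref{stothepowern} cannot carry the weight you assign to it.

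What is missing from your sketch (and what the paper supplies) is a concrete mechanism for clearing the denominators and transferring the conclusion from $R_s$ to $R$. After specialising, one has $F_s([g,h])=\prod_i\leftact{x_{-\gamma_i}(\zeta_i(g)/s^{N_1})}{x_{\gamma_i}(\xi_i(g)/s^{N_1})}$ in $G(\Phi,R_s)$, and this lies only in the localised group. The paper then: (a) chooses $N_2$ so that $F_s$ is injective on $G(\Phi,R,(s^{N_2}))$ --- a first genuine use of Noetherianity; (b) chooses $N$ with $\sigma_\gamma\cap(s^N)\sub s^{3N_1+N_2}\sigma_\gamma$ by Artin--Rees --- a second use; and (c) since Lemma~\ref{Sgeneric} gives $\xi_i(g)\in\sigma_{\gamma_i}\cap(s^N)\sub s^{3N_1+N_2}\sigma_{\gamma_i}$, rewrites each $x_{\gamma_i}(\xi_i(g)/s^{N_1})$ via the Chevalley commutator formula for $[x_{\alpha_i}(s^{N_1+N_2}\theta_i),x_{\beta_i}(c^{-1}s^{N_1})]$ with $\beta_i\in\Delta$ and invertible structure constant (this is where condition $(*)$ and the absence of $A_1$-components enter), so that $F_s([g,h])\in F_s\big(E(\sigma)\cap G(\Phi,R,(s^{N_2}))\big)$ and injectivity yields $[g,h]\in E(\sigma)$. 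Relatedly, your claim that Noetherianity is used only to define $h$ over a finitely generated subring is off the mark: finiteness of the elementary expression for $F_s(h)$ is automatic, and Noetherianity is needed precisely for the injectivity of $F_s$ on congruence subgroups and for Artin--Rees.
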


\begin{proof}
Recall that given a net $\sigma$ we defined certain net $\dot{\sigma}$ in the ring $R[G]$. We identify the element $h$ with its image in the group $G(\Phi,R[G])$, then $F_s(h)\in E(\dot{\sigma})$. By definition of $\dot{\sigma}$ we have $g_{\gen,R}\in S(\dot{\sigma},I_{\aug,R})$. Then $F_s(g_{\gen,R})\in S(F_s(\dot{\sigma}),F_s(I_{\aug,R}))$, and by Corollary~\ref{StandComm} we have
$$
F_s([g_{\gen,R},h])\in E(F_s(\dot{\sigma}),F_s(I_{\aug,R})).
$$
Therefore, by Proposition~\ref{withoutA1} we have
$$
F_s([g_{\gen,R},h])=\prod_i \leftact{x_{-\gamma_i}\left(\tfrac{\zeta_i}{s^{N_1}}\right)}{x_{\gamma_i}\left(\tfrac{\xi_i}{s^{N_1}}\right)}\in G(\Phi,R_s[G]),
$$
where $\gamma_i\in\Phi$, $\xi_i\in\dot{\sigma}_{\gamma_i}\cap I_{\aug,R}$, $\zeta_i\in\dot{\sigma}_{-\gamma_i}$, and $N_1$ is a sufficiently large natural number.

Hence we obtain that for $g\in S(\sigma)$ the following equality holds true
$$
F_s([g,h])=\prod_i \leftact{x_{-\gamma_i}\left(\tfrac{\zeta_i(g)}{s^{N_1}}\right)}{x_{\gamma_i}\left(\tfrac{\xi_i(g)}{s^{N_1}}\right)}\in G(\Phi,R_s)\tp \eqno{(\#)}
$$

Since the ring $R$ is Noetherian, it follows that for a sufficiently large $N_2$ the homomorphism $\map{F_s}{G(\Phi,R,(s^{N_2}))}{G(\Phi,R_s)}$ is injective (see~\cite{BakNonabelian} or~\cite{HazVav}).

Set $N_3=3N_1+N_2$. Then for any $g\in S(\sigma,(s^{N_2}))$ such that $\xi_i(g)\in s^{N_3}\sigma_{\gamma_i}$ for all $i$ we have $[g,h]\in E(\sigma)$. Indeed, let $\xi_i(g)=s^{N_3}\theta_i$, where $\theta_i\in\sigma_{\gamma_i}$. Using the condition $(*)$ for the roots $\gamma_i\in\Phi\sm\Delta$ and the absence of components fo type $A_1$ for the roots $\gamma_i\in\Delta$, we can choose such roots $\beta_i\in\Delta$ that $\alpha_i=\gamma_i-\beta_i\in\Phi$, and the corresponding structural constant $c=c_{\alpha_i,\beta_i}$ is invertible in the ring $R$. Then for every factor in the right hand side of the equality $(\#)$ we can express the element $x_{\gamma_i}\big(\tfrac{\xi_i(g)}{s^{N_1}}\big)$ from the Chevalley commutator formula for the commutator $[x_{\alpha_i}(s^{N_1+N_2}\theta_i),x_{\beta_i}(c^{-1}s^{N_1})]$. After that using the Chevalley commutator formula again, we obtain $F_s([g,h])\in F_s(E(\sigma)\cap G(\Phi,R,(s^{N_2})))$. In addition, by assumption we have $g\in G(\Phi,R,(s^{N_2}))$, and by the choice of the number $N_2$ we have $[g,h]\in E(\sigma)$.

In order to finish the proof, it remains to take such $N>N_2$ that $\sigma_\gamma\cap (s^N)\le s^{N_3}\sigma_\gamma$ for any $\gamma\in\Phi$. Such $N$ exists by Artin--Rees lemma.
\end{proof}

\begin{lem}\label{InductionComm} Assume that the subsystem $\Delta$ has no irreducible components of type $A_1,$ and let $\sigma$ be a net of ideals in the ring $R$. Then the following inclusion holds true
$$
[S(\sigma)\cap\tilde{F}_s^{-1}(E(\tilde{F}_s(\sigma))),F_s^{-1}(E(F_s(\sigma)))]\le E(\sigma)\tp
$$
\end{lem}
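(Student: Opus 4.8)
The plan is to mimic the localisation--completion argument behind Lemma~\ref{Denominators2}, using the hypothesis $\tilde F_s(g)\in E(\tilde F_s(\sigma))$ to strip off the part of $g$ that is not congruent to the identity modulo a high power of $s$, and then to appeal to Lemma~\ref{Denominators2} itself.

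First I would reduce to a Noetherian ring. The elements $g,h$ and the finitely many ring elements witnessing $\tilde F_s(g)\in E(\tilde F_s(\sigma))$ and $F_s(h)\in E(F_s(\sigma))$ all lie over some finitely generated subring $R_0\ni s$; replacing $R$ by $R_0$ and $\sigma$ by $\sigma\cap R_0$ (which is again a net; if $(*)$ demands it, first adjoin $1/2$ and/or $1/3$, exactly as in the proof of Theorem~\ref{Jacobson}) is harmless because $E(\sigma\cap R_0)\le E(\sigma)$ and $g\in S(\sigma\cap R_0)$. For Noetherian $R$ one has $\tilde R_s=\varprojlim_n R/(s^n)$, and the projections $\tilde R_s\to R/(s^n)$ send $E(\tilde F_s(\sigma))$ into $E(\rho_{(s^n)}(\sigma))$ (here $E=\hat E$ since $\Delta$ has no component of type $A_1$); as the composite $R\to\tilde R_s\to R/(s^n)$ is $\rho_{(s^n)}$, this gives $\rho_{(s^n)}(g)\in E(\rho_{(s^n)}(\sigma))$ for every $n$.

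Next, apply Lemma~\ref{Denominators2} to $h$ to obtain $N\in\N$ with $[S(\sigma,(s^N)),h]\le E(\sigma)$. Since $E(\sigma)\twoheadrightarrow E(\rho_{(s^N)}(\sigma))$, choose $a\in E(\sigma)$ with $\rho_{(s^N)}(a)=\rho_{(s^N)}(g)$. Then $g':=a^{-1}g$ lies in $G(\Phi,R,(s^N))$ and in $S(\sigma)$, hence $g'\in S(\sigma,(s^N))$ and $[g',h]\in E(\sigma)$. From $[g,h]=[ag',h]=\leftact{a}{[g',h]}\cdot[a,h]$ and $a\in E(\sigma)$ we conclude that everything comes down to showing $[a,h]\in E(\sigma)$.

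For the last step I would decompose $a$ into root unipotents $x_\beta(\xi)$, $\xi\in\sigma_\beta$, and, using $[xy,h]=\leftact{x}{[y,h]}[x,h]$ together with normality of $E(\sigma)$ in itself (Theorem~\ref{normal}), reduce to a single $[x_\beta(\xi),h]$. Then I would run the ideal argument from the proof of Theorem~\ref{normal}: the set $I=\{\theta\in R:[x_\beta(\xi\theta t),h]\in E(\sigma[t],(t))\}$ is an ideal, and $1\in I$ is the goal. If $1\notin I$ and $\M$ is a maximal ideal over $I$ with $s\notin\M$, then $F_\M$ factors through $F_s$, so $F_\M(h)\in E(F_\M(\sigma))\le S(F_\M(\sigma))$; as $R_\M$ is local, Lemma~\ref{localnorm} yields $[x_\beta(F_\M(\xi)t),F_\M(h)]\in E(F_\M(\sigma)[t],(t))$, and spreading this expression over a principal localisation $R_{s'}$ with $s'\notin\M$ and invoking Lemma~\ref{stothepowern} gives $(s')^M\in I\subseteq\M$, a contradiction. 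The genuinely hard point is the maximal ideals $\M$ with $s\in\M$, where $h$ carries no local data and Lemma~\ref{localnorm} does not apply: there one cannot treat $a$ in isolation but must keep track of the fact that $a\equiv g\pmod{s^N}$ with $\tilde F_s(g)\in E(\tilde F_s(\sigma))$ — that is, $ag^{-1}\in G(\Phi,R,(s^N))$ and $\tilde F_s(ag^{-1})\in E(\tilde F_s(\sigma))$ — so that $s$-adically $a$ is approximable by elementary elements to all orders; absorbing $[a,h]$ into $E(\sigma)$ then requires bootstrapping Lemmas~\ref{denominators} and~\ref{Denominators2} once more and matching the denominators they produce against an Artin--Rees estimate. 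I expect this denominator bookkeeping near $V(s)$ to be the main obstacle.
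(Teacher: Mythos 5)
Your argument coincides with the paper's proof up to the decisive last step: the reduction to a finitely generated (hence Noetherian) subring, the observation that $\rho_{(s^N)}(g)\in E(\rho_{(s^N)}(\sigma))$ because $\rho_{(s^N)}$ factors through $\tilde{F}_s$, the choice of $N$ via Lemma~\ref{Denominators2}, and the factorisation $g=ag'$ with $a\in E(\sigma)$, $g'\in S(\sigma,(s^N))$, $[g',h]\in E(\sigma)$ are exactly the paper's steps. The genuine gap is the point you flag yourself: $[a,h]\in E(\sigma)$. The scheme you sketch for it (splitting $a$ into root elements and rerunning the ideal argument of Theorem~\ref{normal} on each $[x_\beta(\xi t),h]$) cannot be closed, precisely because at maximal ideals containing $s$ the hypothesis $F_s(h)\in E(F_s(\sigma))$ carries no local information; and for a completely arbitrary $h\in F_s^{-1}(E(F_s(\sigma)))$ the inclusion $[E(\sigma),h]\le E(\sigma)$ is not to be expected at all --- when $s$ is a zero divisor, $F_s^{-1}(E(F_s(\sigma)))$ contains root elements $x_\gamma(y)$ with $sy=0$ and $y\notin\sigma_\gamma$, and such elements need not normalise $E(\sigma)$. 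So no amount of denominator bookkeeping near $V(s)$ will finish the proof along the route you propose.

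The missing idea is that the paper disposes of this last commutator in one line by Theorem~\ref{normal}: writing $[g,h]=\leftact{g_1}{[g_2,h]}\cdot[g_1,h]$ with $g_1\in E(\sigma)$, it treats $[g_1,h]=g_1\cdot\leftact{h}{g_1^{-1}}$ via normality of $\hat{E}(\sigma)=E(\sigma)$ under conjugation by $h$; that is, it uses that $h$ normalises $E(\sigma)$, which holds when $h\in S(\sigma)$ --- and this is the case in the only place the lemma is applied, namely $h\in G_0(\sigma)\le G(\sigma)\le S(\sigma)$ in Lemma~\ref{GdbyGdplusone}, even though the hypothesis is not spelled out in the statement. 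Your instinct that this step is the crux (and that it is delicate for arbitrary $h\in F_s^{-1}(E(F_s(\sigma)))$) is sound, but the intended resolution is the normality theorem applied to $h$, not further localisation: once $h$ is known to normalise $E(\sigma)$, your argument terminates immediately with $[a,h]=a\cdot\leftact{h}{a^{-1}}\in E(\sigma)$.
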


\begin{proof}
Let $R_i$ be the inductive system of finitely generated subrings of the ring $R$ that contain  the element $s$. Then it is easy to see that the following equalities holds true
\begin{align*}
	S(\sigma)\cap\tilde{F}_s^{-1}(E(\tilde{F}_s(\sigma)))&=\varinjlim (S(\sigma\cap R_i)\cap\tilde{F}_s^{-1}(E(\tilde{F}_s(\sigma\cap R_i)))),
	\\
	F_s^{-1}(E(F_s(\sigma)))&=\varinjlim(F_s^{-1}(E(F_s(\sigma\cap R_i))))\tp
\end{align*}

Therefore, the proof is reduced to the case where $R$ is finitely generates; hence $\tilde{R}_s=\hat{R}_s$.

Let $g\in S(\sigma)\cap\tilde{F}_s^{-1}(E(\tilde{F}_s(\sigma)))$ and $h\in F_s^{-1}(E(F_s(\sigma)))$. Since the ring $R$ is finitely generated and, in particular, Noetherian, we can apply Lemma~\ref{Denominators2} and take such a number $N$ that $[S(\sigma,(s^N)),h]\sub E(\sigma)$.

Since the projection onto quotient ring $\map{\pr}{R}{R/(s^N)}$ factors through homomorphism $\tilde{F}_s$, it follows that $\pr_*(g)\in E(\pr(\sigma))$. Since the group $E(\sigma)$ maps onto the group $E(\pr(\sigma))$ surjectively, it follows that $g=g_1g_2$, where $g_1\in E(\sigma)$ and $g_2\in S(\sigma,(s^N))$. Using the definition of the number $N$ and Theorem~\ref{normal}, we obtain that
\begin{equation*} %$$
	[g,h]=[g_1g_2,h]=\leftact{g_1}{[g_2,h]}[g_1,h]\in E(\sigma)\tp
	\qedhere
\end{equation*} %$$
\end{proof}

Lemma~\ref{functor}, and Theorem~\ref{normal} allow us to give the following definition:

$$
G_d(\sigma)=\bigcap_{\substack{\map{f}{R}{A} \\ \delta(A)\le d}}\Ker (G(\sigma)\to G(f(\sigma))/\hat{E}(f(\sigma)))\tp
$$

\begin{lem}\label{GbyGzero} In the previous notation the subgroup $G_0(\sigma)$ is normal in the group $G(\sigma),$ and the corresponding quotient group is abelian.
\end{lem}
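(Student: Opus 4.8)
The plan is to realise $G_0(\sigma)$ as an intersection of kernels of group homomorphisms out of $G(\sigma)$, and then to observe that for $d=0$ every one of these kernels already contains the commutator subgroup. First I would record that for each ring homomorphism $\map{f}{R}{A}$ there is a well-defined homomorphism $\map{\phi_f}{G(\sigma)}{G(f(\sigma))/\hat{E}(f(\sigma))}$: the map $f$ carries $G(\sigma)$ into $G(f(\sigma))$ by Lemma~\ref{functor} (applied with second net $f(\sigma)$), while $\hat{E}(f(\sigma))\le G(f(\sigma))\le S(f(\sigma))$ and $\hat{E}(f(\sigma))$ is normal in $S(f(\sigma))$ by Theorem~\ref{normal}, so $\hat{E}(f(\sigma))$ is normal in $G(f(\sigma))$ and the quotient makes sense. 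By the very definition $G_d(\sigma)=\bigcap\Ker\phi_f$, the intersection being over all $f$ with $\delta(A)\le d$; as an intersection of normal subgroups of $G(\sigma)$ it is normal in $G(\sigma)$, which gives the first assertion for $d=0$.

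For the abelian quotient I would use that $\delta(A)\le 0$ holds precisely when $A$ is semilocal (Lemma~\ref{InductionDim}, item~1). For such $A$, Lemma~\ref{GForSemiLocal} yields $G(f(\sigma))=T(\Phi,A)E(f(\sigma))$, and since $E(f(\sigma))\le\hat{E}(f(\sigma))$ the quotient $G(f(\sigma))/\hat{E}(f(\sigma))$ is a homomorphic image of the split maximal torus $T(\Phi,A)\cong (A^*)^{\rk\euP}$, hence abelian. Therefore each $\phi_f$ with semilocal target kills $[G(\sigma),G(\sigma)]$, so $[G(\sigma),G(\sigma)]\le\bigcap_{\delta(A)\le 0}\Ker\phi_f=G_0(\sigma)$, i.e. $G(\sigma)/G_0(\sigma)$ is abelian.

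I do not expect a genuine obstacle: the statement is a short formal consequence of Theorem~\ref{normal}, Lemma~\ref{functor}, Lemma~\ref{GForSemiLocal}, and the abelianness of a split torus. The only point that deserves to be spelled out is the well-definedness of $\phi_f$, namely that $\hat{E}(f(\sigma))$ is normal in $G(f(\sigma))$ — which holds because $G(f(\sigma))$ is sandwiched between $\hat{E}(f(\sigma))$ and $S(f(\sigma))$, so normality in $S(f(\sigma))$ from Theorem~\ref{normal} descends to normality in $G(f(\sigma))$. Note also that no set-theoretic care about the class of test homomorphisms is needed: we never form a product, we only use that each individual $\phi_f$ annihilates commutators.
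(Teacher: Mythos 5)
Your proposal is correct and follows essentially the same route as the paper: both rest on Lemma~\ref{functor}, Lemma~\ref{GForSemiLocal} (so that for a semilocal target the group is $T(\Phi,A)E(f(\sigma))$ and the quotient by $\hat{E}(f(\sigma))$ is a quotient of the torus, hence abelian), and Theorem~\ref{normal} for well-definedness, yielding $[G(\sigma),G(\sigma)]\le G_0(\sigma)$. The only cosmetic difference is that the paper deduces normality of $G_0(\sigma)$ from this commutator inclusion, while you obtain it directly from the description of $G_0(\sigma)$ as an intersection of kernels; both are fine.
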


\begin{proof}
It is enough to prove that $[G(\sigma),G(\sigma)]\le G_0(\sigma)$, i.e. that for any homomorphism $\map{f}{R}{A}$ to a semilocal ring we have $f_*([G(\sigma),G(\sigma)])\le E(f(\sigma))$. Applying Lemma~\ref{functor}, we may assume that $R=A$ and $f=\id$. In this case the statement follows from Lemma~\ref{GForSemiLocal}.
\end{proof}

\begin{lem}\label{GdbyGdplusone} In the previous notation assume that the subsystem $\Delta$ has no irreducible components of type $A_1$. Then $[G_d(\sigma),G_0(\sigma)]\le G_{d+1}(\sigma)$ for any $d\ge 0$.
\end{lem}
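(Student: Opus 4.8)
The plan is first to reduce, via Lemma~\ref{functor}, to the following assertion: if $\delta(R)\le d+1$, $g\in G_d(\sigma)$ and $h\in G_0(\sigma)$, then $[g,h]\in\hat{E}(\sigma)$. Indeed, for an arbitrary homomorphism $\map{f}{R}{A}$ with $\delta(A)\le d+1$ it follows straight from the definitions that $f_*(g)\in G_d(f(\sigma))$ and $f_*(h)\in G_0(f(\sigma))$, so applying the reduced assertion over $A$ gives $f_*([g,h])=[f_*(g),f_*(h)]\in\hat{E}(f(\sigma))$, which is exactly what $[g,h]\in G_{d+1}(\sigma)$ means. I also note that, since $\Delta$ has no irreducible components of type $A_1$, we have $\hat{E}(\tau)=E(\tau)$ for every net $\tau$ over every ring by Lemma~\ref{notA1}, so $\hat{E}$ and $E$ may be used interchangeably below.

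If $\delta(R)\le d$, the reduced assertion is immediate: taking $f=\id$ gives $g\in\hat{E}(\sigma)$, and since $h\in G_0(\sigma)\le S(\sigma)$ while $\hat{E}(\sigma)$ is normal in $S(\sigma)$ by Theorem~\ref{normal}, we get $[g,h]=g\cdot\leftact{h}{g^{-1}}\in\hat{E}(\sigma)$. So assume $\delta(R)=d+1$; then $\delta(R)<\infty$, and Lemma~\ref{InductionDim}(2) supplies maximal ideals $\M_1,\dots,\M_k$ such that $\delta(\tilde{R}_s)\le d$ for every $s\in R\sm(\M_1\cup\dots\cup\M_k)$.

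The heart of the argument is to find a single element $s\in R\sm(\M_1\cup\dots\cup\M_k)$ for which simultaneously $\tilde{F}_s(g)\in E(\tilde{F}_s(\sigma))$ and $F_s(h)\in E(F_s(\sigma))$. The first condition holds for every such $s$: the ring $\tilde{R}_s$ receives a homomorphism from $R$ and has $\delta(\tilde{R}_s)\le d$, so $g\in G_d(\sigma)$ forces $\tilde{F}_s(g)\in\hat{E}(\tilde{F}_s(\sigma))=E(\tilde{F}_s(\sigma))$. For the second, I would localise at the multiplicatively closed set $U=R\sm(\M_1\cup\dots\cup\M_k)$; the ring $R'=U^{-1}R$ is semilocal (its maximal ideals are among the $U^{-1}\M_i$), so $h\in G_0(\sigma)$ gives $F_{R'}(h)\in\hat{E}(F_{R'}(\sigma))=E(F_{R'}(\sigma))$. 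Because $G(\Phi,-)$ commutes with the filtered colimit $R'=\varinjlim_{u\in U}R_u$ and, correspondingly, $E(F_{R'}(\sigma))=\varinjlim_{u\in U}E(F_u(\sigma))$, the element $F_{R'}(h)$ — a finite word in generators, each depending on only finitely much data — already comes from $E(F_u(\sigma))\le G(\Phi,R_u)$ for some $u\in U$; matching it with the image of $h$ after inverting one further element of $U$ if necessary, we obtain $F_s(h)\in E(F_s(\sigma))$ for some $s\in U$. Since $s\in U$ it lies outside every $\M_i$, so it also serves for $g$.

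With this $s$ fixed, the proof concludes by invoking Lemma~\ref{InductionComm}: we have $g\in S(\sigma)\cap\tilde{F}_s^{-1}(E(\tilde{F}_s(\sigma)))$, using $g\in G_d(\sigma)\le G(\sigma)\le S(\sigma)$, and $h\in F_s^{-1}(E(F_s(\sigma)))$, hence $[g,h]\in E(\sigma)=\hat{E}(\sigma)$, as required. The only genuinely delicate point is the descent in the previous paragraph — checking that the elementary net subgroup over the semilocal localisation $R'$ is the union of the $E(F_s(\sigma))$ over $s\in U$, and that $F_{R'}(h)$ and $F_s(h)$ can be matched after inverting one more element — which is the standard behaviour of the finitely presented functor $G(\Phi,-)$ under filtered colimits; everything else is routine bookkeeping with the definition of $G_d(\sigma)$ together with Lemmas~\ref{InductionDim} and~\ref{InductionComm}.
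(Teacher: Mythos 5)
Your proposal is correct and follows essentially the same route as the paper: reduce via Lemma~\ref{functor} to the case $f=\id$ with $\delta(R)\le d+1$, pick the maximal ideals from Lemma~\ref{InductionDim}, use semilocality of $S^{-1}R$ and the definition of $G_0(\sigma)$ to find one $s$ outside all the $\M_i$ with $F_s(h)\in E(F_s(\sigma))$, note $\tilde F_s(g)\in E(\tilde F_s(\sigma))$ from the definition of $G_d(\sigma)$, and finish with Lemma~\ref{InductionComm}. The only differences are presentational: you spell out the filtered-colimit descent from $S^{-1}R$ to some $R_s$ (which the paper leaves implicit in ``by definition of $G_0(\sigma)$'') and treat $\delta(R)\le d$ as a separate, strictly unnecessary, case.
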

\begin{proof}
We must prove that for any homomorphism
$$
\map{f}{R}{A},\ \text{ where }\ \delta(A)\le d+1,
$$
we have $f_*([G_d(\sigma),G_0(\sigma)])\le E(f(\sigma))$. Applying Lemma~\ref{functor}, we may assume that $R=A$ and $f=\id$. Then there exists a finite collection of maximal ideals $\M_1$, $\dots$, $\M_k$ of the ring $R$ such that for any $s\in R\sm\left(\bigcup_k \M_k \right)$ we have $\delta(\tilde{R}_s)\le d$.

Let $g\in G_d(\sigma)$ and $h\in G_0(\sigma)$. Consider the multiplicative set $S=R\sm\left(\bigcup_k \M_k \right)$. The ring $R[S^{-1}]$ is semilocal; hence by definition of the group $G_0(\sigma)$ there exists an element $s\in S$ such that $F_s(h)\in E(F_s(\sigma))$. In addition by choice of the ideals $\M_1$, $\dots$, $\M_k$ and by definition of the group $G_d(\sigma)$, we have $\tilde{F}_s(g)\in E(\tilde{F}_s(\sigma))$. The statement, therefore, follows from Lemma~\ref{InductionComm}.
\end{proof}

Now we finish the proof of Theorem~\ref{nilpotent}. Assume that the subsystem $\Delta$ has no irreducible components of type $A_1$, and let $\delta(R)=d<\infty$. Then, clearly, we have $G_d(\sigma)\!=\!E(\sigma)$. The statement of the theorem, therefore, follows from lemmas~\ref{GbyGzero}~and~\ref{GdbyGdplusone}.

%\bibliographystyle{gost71s}
%\bibliography{russian}

\begin{thebibliography}{99}
	%1
	\bibitem{BV78}
	%Боревич~З.~И., Вавилов Н.~А.,
	%{\it Подгруппы полной линейной группы над полулокальным кольцом, содержащие группу диагональных матриц}, Тр. Мат. ин-та РАН {\bf 148} (1978), 43--57.
	Z.~I.~Borevich and N.~A.~Vavilov,  \textit{Subgroups of the general linear group over a semilocal ring that contain a group of diagonal matrices}, Tr. Mat. Inst. Steklov. \textbf{148} (1978), 43--57;English transl., Proc. Steklov Inst. Math. \textbf{148} (1980), 41–54. \MR{0558939}
	
	%2
	\bibitem{BV84}
	%Боревич З.~И., Вавилов Н.~А.,
	%{\it Расположение подгрупп в полной линейной группе над коммутативным кольцом}, Тр. Мат. ин-та РАН {\bf 165} (1984), 24--42.
	\bysame, {\it Arrangement of subgroups in the general linear group over a commutative ring}, Tr. Mat. Inst. Steklov. {\bf165} (1984), 24--42; English transl., Proc. Steklov Inst. Math. \textbf{165} (1985), 27–46. 
	%\MR{0752930} 	
	
	%3
	\bibitem{Bourbaki4-6}
	%Бурбаки Н.,
	%{\it Группы и алгебры Ли. Гл. $4$--$6$}, Мир, М., 1972.
	N.~Bourbaki, \textit{Elements de mathematique}. Fc. XXXIV. \textit{Groupes et algebres de Lie}. Ch. IV - VI,  Actualites Sci. Industr., No. 1337, Hermann, Paris 1968.
	% \MR{0240238}	
	
	%4
	\bibitem{VavSplitOrt}
	%Вавилов Н.~А.,
	%{\it О подгруппах расщепимых ортогональных групп над кольцом}, Сиб. мат. ж. {\bf 29} (1988), \No4, 537--547.
	N.~A.~Vavilov, {\it Subgroups of split orthogonal groups over a ring }, Sibirsk. Mat. Zh. {\bf 29} (1988), no.4, 31--43; English transl., Sib. Math. J.\textbf{ 29} (1988), no. 4, 537--547. %\MR{0969101}	
	
	%5
	\bibitem{VavMIAN}
	%Вавилов Н.~А.,
	%{\it О подгруппах расщепимых классических групп}, Тр. Мат. ин-та РАН {\bf 183} (1990), 29--42.
	\bysame, {\it Subgroups of splittable classical groups},  Tr. Mat. Inst. Steklov. {\bf 183} (1990), 29--42; English transl., Proc. Steklov Inst. Math. \textbf{1991}, no. 4, 27--41. %\MR{1092012}
	
	%6
	\bibitem{VavPlotII}
	%Вавилов Н.~А., Плоткин Е.~Б.,
	%{\it Сетевые подгруппы групп {Ш}евалле.~\textup{II}. {Р}азложение {Г}аусса}, Зап. науч. семин. ЛОМИ {\bf 114} (1982), 62--76.
	N.~A.~Vavilov and E.~B.~Plotkin, \textit{Net subgroups of Chevalley groups}. II. \textit{Gaussian decomposition}, Zap. Nauchn. Sem. Leningrad. Otdel. Mat. Inst. Steklov. (LOMI) \textbf{114} (1982), 62–76; English transl., J. Math. Sci. (N.Y.) \textbf{27} (1984), no. 4, 2874--2885.
	% \MR{0669560}
	
	%7
	\bibitem{VavStepSurvey}
	%Вавилов Н.~А., Степанов А.~В.,
	%{\it Надгруппы полупростых групп}, Вестн. СамГУ. Естественнонаучн. сер. \textbf{2008}, \No3, 51--95.
	N.~A.~Vavilov and A.~V.~Stepanov,
	{\it Overgroups of semisimple groups}, Vestn. Samar. Gos. Univ. Estestvennonauchn. Ser. \textbf{2008}, no. 3, 51--95. (Russian) 
	%\MR{2473730}
	
	%8
	\bibitem{Vsch}
	%Вавилов Н.~А., Щеголев А.~В.,
	%{\it Надгруппы subsystem subgroups в исключительных группах\textup: уровни}, Зап. науч. семин. ПОМИ {\bf 400} (2012), 70--126.
	N.~A.~Vavilov and A.~V.~Shchegolev,
	{\it Overgroups of subsystem subgroups in exceptional groups: levels },  Zap. Nauchn. Sem. S.-Peterburg. Otdel. Mat. Inst. Steklov. (POMI)  {\bf 400} (2012), 70--126; English transl., J. Math. Sci. (N.Y.) \textbf{192} (2013), no. 2, 164--195. 
	%\MR{3029566} 
	
	%9
	\bibitem{GvozLevi}
	%Гвоздевский П.~Б.,
	%{\it Надгруппы подгрупп Леви \textup{I}. Случай абелева унипотентного ра\-ди\-ка\-ла}, Алгебра и анализ {\bf 31} (2019), \No6, 79--121.
	P.~B.~Gvozdevsky,
	\textit{Overgroups of Levi subgroups. I. The case of an abelian unipotent radical}, Algebra i Analiz \textbf{31} (2019), no. 6, 79--121; English transl.,  St. Petersburg Math. J. \textbf{ 31} (2020), no. 6, 969--999.
	%\MR{4039348}	
	
	%10
	\bibitem{Gvoz2A1}
	%Гвоздевский П.~Б.,
	%{\it Надгруппы подсистемных подгрупп в исключительных группах\textup: $2{A}_1$-доказательство}, Алгебра и анализ {\bf 32} (2020), \No6, 72--100.
	\bysame, \textit{Overgroups of subsystem subgroups in exceptional groups: $2A_1$-proof}, Algebra i Analiz \textbf{32} (2020), no. 6, 72--100. 
	%\MR{4219492}
	
	%11
	\bibitem{KopejkoSpStab}
	%Копейко В.~И., {\it Стабилизация симплектических групп над кольцом многочленов}, Мат. сб. {\bf 106} (1978), \No1, 94--107.
	V.~I.~Kopeiko, \textit{Stabilization of symplectic groups over a ring of polynomials}, Mat. Sb. \textbf{106} (1978), no. 1, 94--107; English transl., Sb. Math. \textbf{34} (1978), no. 5, 655--669.
	%\MR{0497932}
	%12
	\bibitem{Humphreys}
	%Хамфрис Дж., {\it Введение в теорию алгебр Ли и их представлений}, МЦНМО, М., 2003.
	J.~E. Humphreys, \textit{Introduction to lie algebras and representation theory},
	Springer, 1973.
	%13
	\bibitem{SchegMainResults}
	%Щеголев А.~В., {\it Надгруппы блочно-диагональных подгрупп гиперболической унита\-р\-ной группы над квази-конечным кольцом\textup: Основные результаты}, Зап. науч. семин. ПОМИ {\bf 443} (2016), 222--233.
	A.~V.~Shchegolev,
	\textit{Overgroups of an elementary block-diagonal subgroup of the classical symplectic group over an arbitrary commutative ring}, Zap. Nauchn. Sem. S.-Peterburg. Otdel. Mat. Inst. Steklov. (POMI)  {\bf 443} (2016), 222--233; English transl., J. Math. Sci. (N.Y.) \textbf{222} (2017), no. 4, 516--523.
	% \MR{3507773}		
	
	%14
	\bibitem{SchegSymplectic}
	%Щеголев А.~В., {\it Надгруппы элементарной блочно-диагональной подгруппы класси\-ческой симплектической группы над произвольным коммутативным кольцом}, Алгебра и анализ {\bf 30} (2018), \No6, 147--199.
	\bysame, \textit{Overgroups of an elementary block-diagonal subgroup of the classical symplectic group over an arbitrary commutative ring}, Algebra i Analiz \textbf{30} (2018), no. 6, 147--199; English transl., St. Petersburg Math. J.\textbf{ 30} (2019), no. 6, 1007--1041. %\MR{3882542}	
	
	%15
	\bibitem{AbeSuzuki}
	Abe E., Suzuki K., {\it On normal subgroups of {C}hevalley groups over commutative rings}, Tohoku Math. J. {\bf 28} (1976), no.~2, 185--198.
	%\MR0439947	
	
	%16
	\bibitem{BakNonabelian}
	Bak A., {\it Nonabelian \textup{K}-theory\textup: {T}he nilpotent class of {$\mathrm{K}_{1}$} and general stability}, K-Theory {\bf 4} (1991), 363--397.
	%\MR{1115826}
	
	%17
	\bibitem{CohnGL2}
	Cohn P.~M., {\it On the structure of the $\mathrm{GL}_2$ of a ring}, Inst. Hautes \'Etudes Sci. Publ. Math. {\bf 30} (1966), 5--53.
	%\MR{0207856}
	
	%18
	\bibitem{HazVav}
	Hazrat R., Vavilov N.,
	{\it {$K_1$} of {C}hevalley groups are nilpotent}, J. Pure Appl. Algebra {\bf 179} (2003), no. 1-2, 99--116.
	%\MR{1958377}
	
	%19
	\bibitem{RoozemondDiss}
	Roozemond D.~A.,
	{\it Algorithms for Lie algebras of algebraic groups}, Ph.D. thesis, Technische Univ., Eindhoven, 2010.
	
	%20
	\bibitem{SchegDiss}
	Shchegolev A., {\it Overgroups of elementary block-diagonal subgroups in even unitary groups over quasi-finite rings}, Ph.D. thesis, Fak. Math. Univ., Bielefeld, 2015.
	
	
	%21
	\bibitem{StepUniloc}
	Stepanov A.~V., {\it Structure of {C}hevalley groups over rings via universal localization}, J.~Algebra {\bf 450} (2016), 522--548.
	%\MR{3449702}
	
	%22
	\bibitem{StepVavDecomp}
	Stepanov A.~V., Vavilov N.~A., {\it Decomposition of transvections\textup: {T}heme with variations}, K-Theory {\bf 19} (2000), no.~2, 109--153.
	%MR{1740757}
	
	%23
	\bibitem{Taddei}
	Taddei G., {\it Normalit{\'e} des groupes {\'el\'e}mentaire dans les groupes de {C}hevalley sur un anneau}, Applications of algebraic $K$-theory to algebraic geometry and number theory, Part I, II (Boulder, Colo., 1983), Contemp. Math., vol. 55, Amer. Math. Soc., Providence, RI, 1986, pp. 693--710.
	%\MR{0862660}
	
	
	%24
	\bibitem{VasersteinChevalley}
	Vaserstein L.~N., {\it On normal subgroups of {C}hevalley groups over commutative rings}, Tohoku Math. J. {\bf 38} (1986), no. 2, 219--230.
	%\MR{0843808}
	
	%25
	\bibitem{VavSbgs}
	Vavilov N.~A.,
	{\it Intermediate subgroups in {C}hevalley groups}, Groups of Lie type and their geometries (Como, 1993), 	London Math. Soc. Lecture Note Ser., vol. 207,  Cambridge Univ. Press, Cambridge, 1995, pp. 233--280.
	%\MR{1320525}	
	
	
\end{thebibliography}

\end{document}